\newtheorem{theorem}{Theorem} [section]
\newtheorem{proposition}[theorem]{Proposition}
\newtheorem{lemma}[theorem]{Lemma}
\newtheorem{corollary}[theorem]{Corollary}
\newtheorem{example}[theorem]{Example}
\newcommand{\NN}{{\mathbb N}}
\def\N{{d}}
\newcommand{\Z}{{\mathbb Z}}
\newcommand{\R}{{\mathbb R}}
\def\d{{\rm d}}
\def\e{{\rm e}}
\def\loc{{\rm loc}}
\def\<{\langle}
\def\>{\rangle}
\def\ri{{\rm i}}
\def\supp{{\rm supp}}
\def\:{{\colon}}
\def\M{\mathcal{M}}
\def\BTV{{\rm BTV}}
\def\be#1{\begin{equation}\label{#1}}
\def\ee{\end{equation}}
\newcommand{\vvert}[1]{{\left\vert\kern-0.25ex\left\vert\kern-0.25ex\left\vert #1
    \right\vert\kern-0.25ex\right\vert\kern-0.25ex\right\vert}}
\newcommand{\Norm}{\vvert}
\newcommand{\D}{\displaystyle}
\newcommand{\Chi}{{\cal X}}
\newcommand{\eps}{\varepsilon}
\title{Optimal existence classes  and nonlinear--like dynamics in the
linear   heat equation in $\R^{\N}$}
\author{James C. Robinson${}^1$, \\
An\'{\i}bal Rodr\'{\i}guez-Bernal ${}^{2,}$ \thanks{Partially supported
    by Project  MTM2016-75465, MICINN and   GR58/08
  Grupo 920894, UCM, Spain}
\thanks{Partially supported by PRX17/00522 Programa Salvador de
  Madariaga MECyD, Spain, and by an EPSRC grant EP/R023778/1.}}
\date{\today}
\begin{document}

\maketitle

\setcounter{footnote}{2}

\makeatletter
\begin{center}
${}^{1}$Mathematics Institute\\
University of Warwick  \\  Gibbet Hill Rd, Coventry CV4 7AL, UK\\ {E-mail:
j.c.robinson@warwick.ac.uk}
\\ \mbox{}
\\
${}^{2}$Departamento de Matem\'atica Aplicada\\ Universidad
  Complutense de Madrid\\ 28040 Madrid, Spain\\ and \\
  Instituto de Ciencias Matem\'aticas \\
CSIC-UAM-UC3M-UCM \footnote{${}^{*}$Partially supported by ICMAT Severo Ochoa
  project SEV-2015-0554 (MINECO)}
 \\ {E-mail:
arober@mat.ucm.es}
\end{center}
\makeatother

\begin{abstract}
  We analyse the behaviour of solutions of the linear heat equation in
  $\R^d$ for initial data in the classes $\M_\eps(\R^{\N})$ of Radon measures
  with $\int_{\R^\N}\e^{-\eps|x|^2}\,\d |u_0|<\infty$. We show that these classes
  are in some sense optimal for local and global existence of non-negative solutions: in particular
  $\M_0(\R^{\N})=\cap_{\eps>0}\M_\eps(\R^{\N})$ consists precisely of those initial data for
  which the a solution of the heat equation can be given for all time using the heat kernel representation formula. After
  considering properties of existence, uniqueness, and regularity for
  such initial data, which can grow rapidly at infinity, we go on to
  show that they give rise to properties associated more often with
  nonlinear models. We demonstrate the finite-time blowup of
  solutions, showing that the set of blowup points is the complement of a convex set, and that given any closed convex set there is an initial condition whose solutions remain bounded precisely on this set at the `blowup time'. We also show that wild oscillations are possible from
  non-negative initial data as $t\to\infty$ (in fact we show that this behaviour is generic), and that one can   prescribe the behaviour of $u(0,t)$ to be any real-analytic function $\gamma(t)$ on $[0,\infty)$.
\end{abstract}

\section{ Introduction}
\setcounter{equation}{0}

In this paper we consider the linear heat equation posed on the whole
space $\R^\N$, with very general initial data, which may be either
only locally integrable or even a Radon measure. For an appropriate
class of initial data $u_0$, see e.g.\ \cite{Tychonov}, it is well known that solutions to this equation,
\begin{equation} \label{eq:intro_heat_equation}
u_t-\Delta u=0,\ x\in\R^\N,\ t>0,\qquad u(x,0)=u_0(x),
\end{equation}
can be written using the heat kernel as
\begin{equation} \label{eq:intro_solution_heat_up_t=0}
u(x,t)=S(t)u_{0} (x) := \frac{1}{(4\pi t)^{\N/2}} \int_{\R^\N} \e^{-|x-y|^2/4t}
u_0(y) \, \d y  , \quad x\in \R^\N, \ t>0 .
\end{equation}

It turns out that the behaviour of solutions in
(\ref{eq:intro_solution_heat_up_t=0}) is significantly affected by the way
the mass of the initial data is distributed in space.

If the mass as $|x|\to \infty$ is not too large it is well known that
 the `mass' of the initial data moves
to infinity and the solutions decay to zero in suitable norms.
For example, if $u_{0} \in L^{p}(\R^\N)$ for some
$1\leq p< \infty$ then classical estimates ensure that
\begin{equation} \label{eq:heat_LpLq_estimates}
  \|u(t)\|_{L^{q}(\R^\N)} \leq  (4\pi t)^{-\frac{\N}{2}(\frac{1}{p}-\frac{1}{q})}
    \|u_{0}\|_{L^{p}(\R^\N)},\quad\mbox{for every } t>0 \mbox{ and }q\mbox{ with } p\leq q \leq \infty,
\end{equation}
which in particular implies  that all solutions converge uniformly to zero on the whole
of $\R^\N$. 
In particular,  for $u_{0} \in L^{1}(\R^\N)$ since we also have
\begin{displaymath}
  \int_{\R^{\N}} u(x,t)\, \d x = \int_{\R^\N} u_{0}(y)\,  \d y, \quad
  t>0,
\end{displaymath}
it follows that for such $u_{0}$ the total
mass is preserved but (from  (\ref{eq:heat_LpLq_estimates})) the
supremum tends to zero, i.e.\ the mass moves to infinity.

It is also known that as $t\to \infty$,  solutions asymptotically
resemble the heat kernel
$$K(x,t)=(4\pi t)^{-\N/2}
\e^{-|x|^2/4t},$$
see for example Section 1.1.4 in
\cite{GigaGigaSaal}. The faster the initial data decays as $|x|\to \infty$ the higher the order of the asymptotics of the solution that are described by
the heat kernel, see e.g.\ \cite{DuoanZuazua}.

When the initial data is bounded, $u_{0} \in L^{\infty}(\R^{\N})$, the
decay described above does not necessarily take place. In fact
(\ref{eq:heat_LpLq_estimates}) reduces to
\begin{displaymath}
  \|u(t)\|_{L^{\infty}(\R^\N)} \leq      \|u_{0}\|_{L^{\infty}(\R^\N)}, \quad t>0,
\end{displaymath}
which does not in general imply any decay. 
For example, if $u_{0}\equiv1$ then $u(x,t)=1$ for every $t>1$; for any $R>0$ we can write
\begin{displaymath}
  1 = u(t, \Chi_{B(0,R)}) + u(t, \Chi_{\R^{\N}\setminus B(0,R)} ),
\end{displaymath}
where $\Chi_A$ denotes the characteristic function of the set $A$. Since $\Chi_{B(0,R)} \in  L^{1}(\R^\N)\cap  L^{\infty}(\R^\N)$
the mass of $0\leq u(t, \Chi_{B(0,R)}) $ escapes to infinity but, on
the other hand, the mass of $\Chi_{\R^{\N}\setminus B(0,R)}$, diffused
by $u(t, \Chi_{\R^{\N}\setminus B(0,R)} )$,
moves `inwards' from infinity and both balance precisely at every
time.

Hence, it turns out the dynamics of the solutions
(\ref{eq:intro_solution_heat_up_t=0}) of the heat equation
(\ref{eq:intro_heat_equation}) for bounded initial data is much richer
than for initial data with small mass at infinity. For example, the existence of one-dimensional bounded oscillations was proved in  Section 8 in
\cite{ColletEckman}, while bounded `wild' oscillations in any
dimensions were shown to exist in  \cite{VZ2002} by a scaling method. It is
worth noting that this scaling argument is also applied  in  \cite{VZ2002} to
some nonlinear equations  (porous medium, $p$-Laplacian, and scalar
conservation laws). Indeed, this scaling argument allows one to show that
for $L^{p}(\R^{\N})$ initial data, $1\leq p < \infty$, the solution of
(\ref{eq:intro_heat_equation}) asymptotically approaches the heat
kernel. The scaling argument was later extended to some nonlinear
dissipative reaction diffusion equations in  \cite{CazenaveDW}.

In this paper our goal is to consider some (optimal) classes of
unbounded data that possess large mass at infinity. In such a
situation we show how the mechanism of mass moving inwards from
infinity plays a dominant role on the structure and properties of
solutions of (\ref{eq:intro_heat_equation}). It turns out that in
this setting, solutions of (\ref{eq:intro_heat_equation}) show
surprising dynamical behaviours more akin to what is expected in nonlinear
equations.

For example, in our class of `large' initial data finite-time blowup is possible. We completely characterise (non-negative) initial data
for which the solution ceases to exist in some finite time; we
determine the maximal existence time and characterise the blow-up
points, which are the complement of a convex set. Hence we are able
to construct non-negative initial data for which the solution
exhibits regional, or complete blow--up. One can even find solutions
with a finite pointwise limit at every point in $\R^{\N}$ at the
maximal existence time, but that can not be continued beyond this maximal time (`finite existence time without blowup'). In particular, we prove that given any closed convex set in
$\R^{\N}$, there exists an initial condition such that the solution remains bounded at the maximal existence time precisely on this set. Observe that most of this behaviour is characteristic of
nonlinear non-dissipative problems, see e.g.\ \cite{QuittnerSouplet}.
Our analysis includes and extends the classical example
$u_0(x)=\e^{A|x|^2}$, with $A>0$ for which the solution is given by
\begin{displaymath}
  u(x,t) = \frac{T^{\N/2}}{ (T-t)^{\N/2}}   \e^{\frac{|x|^2}{4(T-t)}},
\end{displaymath}
with $T=\frac{1}{4A}$ which blows up at every point $x\in\R^\N$ as $t\to T$.

For those solutions that exist globally in time we characterise those
that are unbounded and also construct (non-negative)  initial data such that the
solution displays wild unbounded oscillations (cf.\ \cite{VZ2002}). For this, given any
sequence of nonnegative numbers $\{\alpha_{k}\}_{k}$ we construct
initial data such that there exists a sequence of times $t_{k} \to
\infty$ such that for any $k\in \NN$ there exists a subsequence
$\{t_{k_{j}}\}_{j}$ such that
\begin{displaymath}
  u(0,t_{k_{j}}) \to \alpha_{k} \quad\mbox{as}\quad j\to \infty.
\end{displaymath}
We also show that this oscillatory behaviour is generic within a
suitable (optimal) class of solutions.  Notice that unbounded
oscillatory behaviour is an outstanding feature of some nonlinear
non-dissipative equations, see, for example,  Theorem 6.2 in
\cite{PolacikYanagida} where some solutions are shown to satisfy
\begin{displaymath}
  \liminf_{t \to \infty} \|u(t;u_{0})\|_{L^{\infty}(\R^{\N})} =0 \quad\mbox{and}\quad
  \limsup_{t \to \infty} \|u(t;u_{0})\|_{L^{\infty}(\R^{\N})} =\infty.
\end{displaymath}

All the nonlinear-like behaviour described above is caused by the
large mass of the initial data at infinity that is diffused by the
solution of the heat equation and is moved inwards bounded regions in
$\R^{\N}$, so that its effect is felt at later times.

Throughout the paper our analysis is based on the following  spaces: we define the subclass $\M_\eps(\R^\N)$ of Radon measures $\M_\loc(\R^\N)$
by setting
$$
\mathcal{M}_{\eps} (\R^\N):=\left\{\mu \in \mathcal{M}_\loc(\R^\N):\ \int_{\R^\N}
  \e^{-\eps |x|^{2}} \,  \d |\mu( x)|  < \infty\right\};
$$
where $ |\mu|$ denotes the total variation of $\mu$,
with the norm
\begin{displaymath}  
\|\mu\|_{\mathcal{M}_\eps (\R^\N)} := \left(\frac{\eps}{\pi}\right)^{\N/2}
\int_{\R^\N}\e^{-\eps|x|^2} \,\d |\mu(x)|;
\end{displaymath}
i.e.\  $\M_\eps
(\R^\N)$ consists of Radon measures for which
$\e^{-\eps |x|^{2}} \in L^{1}(\d|\mu|)$ and is a Banach space. [This set of measures
was briefly mentioned in \cite{Aro71}, which considered only
non-negative weak solutions of parabolic
problems.]
Since any locally integrable function $f \in  L^{1}_{\loc} (\R^{\N})$ defines
the  Radon measure  $f \, \d x \in  \mathcal{M}_\loc (\R^\N)$ the
class above contains
$$
L^1_\eps(\R^\N):=\left\{f\in L^1_\loc(\R^\N):\ \int_{\R^\N}
  \e^{-\eps|x|^2}|f(x)|\,\d x<\infty\right\}.
$$

These classes turn out to be optimal in several ways for
non-negative solutions
(\ref{eq:intro_solution_heat_up_t=0}) of
(\ref{eq:intro_heat_equation}) which are now given by
\begin{equation} \label{eq:intro_solution_heat_up_t=0_measure}
  u(x,t)  = \frac{1}{(4\pi t)^{\N/2}} \int_{\R^\N} \e^{-|x-y|^2/4t}
\, \d u_{0}(y) .
\end{equation}
First an initial condition in $\M_\eps
(\R^\N)$ gives rise to a (classical) solution of
(\ref{eq:intro_heat_equation}) defined for
$0<t<T(\eps)=\frac{1}{4\eps}$. Conversely for any non-negative solution
(\ref{eq:intro_solution_heat_up_t=0}) of
(\ref{eq:intro_heat_equation})  that is finite
at some  $(x,t)$ then the initial data must belong to $\M_{1/4t}(\R^\N)$.
As a consequence a  non-negative initial
condition in $\M_\loc(\R^\N)$ gives rise to a globally
defined solution if and only if  it belongs to
\begin{displaymath}
\M_0 (\R^\N) :=\bigcap_{\eps>0}\M_\eps (\R^\N).
\end{displaymath}
Within this class of initial data we also show that a non-negative solution is bounded
for some $t_{0}>0$ (and hence for all $t>0$) if and only if the initial
data is a uniform measure in the sense that
\begin{displaymath}
  \sup_{x \in \R^\N} \ \int_{B(x,1)} \d |u_{0}(y)|  < \infty .
\end{displaymath}
Finally we show that a non-negative solution is bounded on sets of the
form $|x|^{2}/t \leq R$, with $R>0$, if and only if
\begin{displaymath}
   \sup_{\eps >0}  \| u_{0} \|_{ \mathcal{M}_{\eps} (\R^{\N})} < \infty
   .
\end{displaymath}

In contrast, if
$$
\eps_0(u_{0})=\inf\{\eps>0:\ 0\leq u_0\in\M_\eps(\R^\N) \}>0
$$
 then the solution will exists only up to   $T= T(u_{0})= \frac{1}{4\eps_0}$ and cannot
 be continued beyond this time at any point.
The points $x$ at which the solution has a finite limit  as $t\to T$
are characterised by a condition on the translated measure, namely  $\tau_{-x} u_{0}\in\M_{\eps_0}
(\R^\N)$, and they must form a convex set. Conversely, as mentioned
above, at any chosen closed  convex subset of $\R^\N$,  there exist
some $u_{0}\geq 0$ such that the
limit as $t\to T$ of the solution is finite precisely
at this set.  In particular there are initial
conditions such that $\lim_{t\to T}u(x,t)<\infty$ for every
$x\in\R^\N$ but the solution cannot be defined past time $T$.

Large initial  data  can also exhibit other unusual properties
not normally associated with the heat equation. For example,
observe  that   for any  $\omega \in \R^{\N}$ the function $\varphi(x)=
\e^{\omega x} \in L^{1}_{0}(\R^{\N}) :=\bigcap_{\eps>0}L^1_\eps(\R^\N)$ satisfies
  $-\Delta \varphi = -|\omega|^{2} \varphi$,
while  $\phi(x) = \e^{\ri\omega x} \in L^{1}_{0}(\R^{\N})$  satisfies
 $-\Delta \phi = |\omega|^{2} \phi$.
It follows that  the spectrum of the Laplacian satisfies in this setting is the whole of $\R$,
\begin{displaymath}
  \sigma_{L^{1}_{0}(\R^{\N})} (-\Delta) = \R,
\end{displaymath}
and that for any $\omega\in\R^{\N}$ the function
  \begin{displaymath}
    u(x,t) = \e^{|\omega|^2 t + \omega x}  \quad x \in \R^{\N}, \ t >0
  \end{displaymath}
is a globally-defined solution of (\ref{eq:intro_heat_equation}) in
$L^{1}_{0}(\R^{\N})$; the exponential growth rate of such solutions can be
arbitrarily large.

The paper is organized as follows. In Section \ref{sec:radon-measures}
we recall some basic properties of Radon measures. In Section
\ref{sec:existence_regularity} we show that for an initial condition in
$\mathcal{M}_{\eps}(\R^{\N})$ the integral expression
(\ref{eq:intro_solution_heat_up_t=0_measure}) defines a classical
solution of the heat equation that attains the initial data in the
sense of measures. Conversely, we show that if
(\ref{eq:intro_solution_heat_up_t=0_measure}) is finite at some
$(x,t)$ for some non--negative measure $u_{0}$, then it must be in
some $\mathcal{M}_{\eps}(\R^{\N})$ space. In Section
\ref{sec:uniqueness} we tackle the problem of uniqueness. In Section
\ref{sec:blowup} we discuss and characterise the non-negative
solutions that cease
to exist in finite time, determining both the blow-up time $T$ and the points at which the solution has a finite limit as $t\to $T. In Section \ref{sec:asympt-behav-heat}  we discuss the
long-time behaviour of global solutions showing, in particular, wild unbounded
oscillations for some initial data; we show that this behaviour is generic (in an appropriate sense). Allowing for sign-changing solutions we also show there how to obtain solutions with any
prescribed behavior in time at $x=0$. Finally, in Section
\ref{sec:extensions-other-probl}, we briefly discuss other problems
that can be dealt with the same techniques. Appendix
\ref{sec:some-auxil-results} contains some required technical
results.

\section{Radon measures on $\R^{\N}$}
\label{sec:radon-measures}
\setcounter{equation}{0}

In this section we will recall some basic results on Radon measures that will
be used throughout the rest of the paper; details can be found in  \cite{BeneCzaja,Folland,FritzRoy,GiaqModicaSoucek}.
A Radon measure in $\R^\N$ is a  regular Borel measure assigning finite measure
to each compact set. The set of all Radon measures in $\R^\N$ is
denoted $\mathcal{M}_\loc (\R^\N)$.

Radon measures arise as the natural representation of linear functionals on the set $C_c(\R^d)$ of real-valued functions of compact support in two distinct settings.

\begin{theorem}
If $L\:C_{c}(\R^\N) \to \R$ is linear and positive, i.e.\ $L(\varphi)\geq0$ for $0\leq \varphi \in C_{c}(\R^\N)$, then there exists a (unique) non-negative Radon measure $\mu \in
\mathcal{M}_\loc (\R^\N)$ such that
$$
  L(\varphi) = \int_{\R^\N} \varphi \, \d\mu\qquad\mbox{for every}\quad\varphi \in C_{c}(\R^\N).
$$
\end{theorem}

A similar result holds if positivity is replaced by continuity, in the following sense: we equip $C_{c}(\R^\N)$ with the final (linear) topology associated with the
inclusions
\begin{displaymath}
  C_{c}(K) \hookrightarrow C_{c}(\R^\N), \qquad K {\subset\subset
    \kern2pt} \R^{\N}
\end{displaymath}
where, for each compact set $K \subset \R^{\N}$ we consider the sup
norm in $ C_{c}(K)$.
More concretely, a sequence $\{\varphi_{j}\}_{j} $ in $C_{c}(\R^\N)$ converges to
$\varphi \in C_{c}(\R^\N)$, iff there exists a compact $K\subset
    \R^\N$ such that ${\rm supp}(\varphi_{j} ) \subset K$ for
    all $j\in \NN$ and  $\varphi_{j} \to \varphi$ uniformly in $K$. A linear map $L\:C_c(\R^N)\to\R$ is then continuous if for every compact set $K\subset
    \R^\N$ there exists a constant $C_{K}$ such that for every
$\varphi \in C_{c}(\R^\N)$ with support in $K$
\begin{displaymath}
  |L(\varphi)| \leq C_{K} \sup_{x\in K} |\varphi(x)| .
\end{displaymath}

\begin{theorem} If  $L\:C_{c}(\R^\N) \to \R$ is linear and continuous (in the sense described above) then there exists a (unique, signed)  Radon measure $\mu \in
\mathcal{M}_\loc (\R^\N)$ such that
\begin{equation}  \label{eq:Radon_as_linear}
  L(\varphi) = \int_{\R^\N} \varphi \, \d \mu\qquad\mbox{for every}\quad\varphi \in C_{c}(\R^\N).
\end{equation}
\end{theorem}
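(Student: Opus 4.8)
The plan is to reduce the signed case to the positive case established in the previous theorem, through a Jordan-type decomposition of the functional $L$ itself rather than of a measure. For each non-negative $\varphi \in C_c(\R^\N)$ I would set
\[
  L^{+}(\varphi) := \sup\{L(\psi):\ \psi \in C_c(\R^\N),\ 0 \le \psi \le \varphi\}.
\]
The continuity hypothesis is exactly what makes this finite: writing $K = \supp\varphi$, any competitor $\psi$ has $\supp\psi \subset K$ and $|L(\psi)| \le C_K \sup_{x\in K}|\psi(x)| \le C_K \sup_{x \in K}|\varphi(x)|$. Taking $\psi = 0$ gives $L^{+}(\varphi) \ge 0$, and taking $\psi = \varphi$ gives $L^{+}(\varphi) \ge L(\varphi)$.

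Next I would verify that $L^{+}$ is positively homogeneous and additive on the cone of non-negative functions. Homogeneity, $L^{+}(c\varphi) = cL^{+}(\varphi)$ for $c \ge 0$, is immediate from the definition. For additivity, the inequality $L^{+}(\varphi_1) + L^{+}(\varphi_2) \le L^{+}(\varphi_1 + \varphi_2)$ is clear, since $0 \le \psi_i \le \varphi_i$ implies $0 \le \psi_1 + \psi_2 \le \varphi_1 + \varphi_2$ and $L(\psi_1) + L(\psi_2) = L(\psi_1 + \psi_2) \le L^{+}(\varphi_1 + \varphi_2)$. The reverse inequality is the key technical point: given any $0 \le \psi \le \varphi_1 + \varphi_2$, I would split $\psi = \psi_1 + \psi_2$ with $\psi_1 := \min(\psi, \varphi_1)$ and $\psi_2 := \psi - \psi_1 = \max(\psi - \varphi_1, 0)$, then check that both pieces lie in $C_c(\R^\N)$ and satisfy $0 \le \psi_i \le \varphi_i$; consequently $L(\psi) = L(\psi_1) + L(\psi_2) \le L^{+}(\varphi_1) + L^{+}(\varphi_2)$, and taking the supremum over $\psi$ finishes the claim.

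With additivity and homogeneity in hand, I would extend $L^{+}$ to all of $C_c(\R^\N)$ by $L^{+}(\varphi) := L^{+}(\varphi^{+}) - L^{+}(\varphi^{-})$, where $\varphi^{\pm}$ denote the positive and negative parts, and check that this is well defined and linear (the additivity on the cone is precisely what guarantees independence of the chosen splitting). Thus $L^{+}$ is a positive linear functional, and so is $L^{-} := L^{+} - L$, whose positivity on the cone follows from $L^{+}(\varphi) \ge L(\varphi)$ for $\varphi \ge 0$. Applying the previous theorem to $L^{+}$ and $L^{-}$ yields non-negative Radon measures $\mu^{+}, \mu^{-} \in \M_\loc(\R^\N)$, and the signed Radon measure $\mu := \mu^{+} - \mu^{-}$ represents $L$ via $L(\varphi) = \int_{\R^\N}\varphi\,\d\mu$ for every $\varphi \in C_c(\R^\N)$.

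Finally, for uniqueness I would use that a Radon measure is determined by its integrals against $C_c(\R^\N)$: if $\int_{\R^\N}\varphi\,\d\mu = \int_{\R^\N}\varphi\,\d\nu$ for all such $\varphi$, then approximating the indicator of a bounded open set from below by an increasing sequence of non-negative functions in $C_c(\R^\N)$ (via Urysohn's lemma) and invoking inner and outer regularity shows that $\mu$ and $\nu$ agree on open sets, hence on all Borel sets. I expect the main obstacle to be the additivity of $L^{+}$: the decomposition $\psi = \min(\psi,\varphi_1) + \max(\psi - \varphi_1, 0)$, together with the verification that each summand is an admissible competitor, is the essential technical step, while the remaining verifications are routine bookkeeping.
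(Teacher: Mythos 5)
The paper itself gives no proof of this theorem: it is recalled in Section 2 as a classical result, with details deferred to the cited references (Folland, Benedetto--Czaja, Fitzpatrick--Royden, Giaquinta--Modica--Sou\v{c}ek). Your argument is precisely the standard proof found in those references --- the Jordan decomposition of the functional, with $L^{+}(\varphi)=\sup\{L(\psi):0\le\psi\le\varphi\}$ on the non-negative cone, the $\min/\max$ splitting to obtain additivity, the extension via $\varphi=\varphi^{+}-\varphi^{-}$, and an application of the positive Riesz theorem (the paper's preceding theorem) to $L^{+}$ and $L^{-}=L^{+}-L$ --- and it is correct; in particular the key subadditivity step, checking that $\psi_1=\min(\psi,\varphi_1)$ and $\psi_2=\max(\psi-\varphi_1,0)$ are admissible competitors, is handled properly. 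Two minor points of bookkeeping deserve a mention. First, since $\mu^{+}$ and $\mu^{-}$ may both have infinite total mass, the difference $\mu=\mu^{+}-\mu^{-}$ must be read as a locally finite signed set function, i.e.\ as an element of $\M_\loc(\R^\N)$ in the paper's sense; this is unproblematic precisely because both measures are finite on compact sets. Second, in the uniqueness step, the passage from $\int\varphi_n\,\d\mu=\int\varphi_n\,\d\nu$ to $\mu(U)=\nu(U)$ for $\varphi_n\uparrow\Chi_U$ should invoke dominated convergence with respect to $|\mu|+|\nu|$ (finite on $\overline{U}$ when $U$ is bounded) rather than monotone convergence, since the measures are signed; after that, agreement on bounded open sets extends to all Borel sets by regularity exactly as you indicate.
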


As a consequence of this second theorem the set of Radon
measures can be characterised as the dual space of $C_c(\R^\N)$,
\begin{displaymath}
\mathcal{M}_\loc (\R^\N) =  \Big(C_{c}(\R^\N)\Big)',
\end{displaymath}
and we typically identify $L\in(C_c(\R^\N))'$ with the corresponding
Radon measure $\mu$ from (\ref{eq:Radon_as_linear}). In this way we
can write
$$
\<\mu,
\varphi\> = \D \int_{\R^\N} \varphi \, \d \mu\qquad\mbox{for every}\quad \varphi \in
C_{c}(\R^\N).
$$
Notice that, in particular,
\begin{displaymath}
  L^{1}_{\loc} (\R^{\N}) \subset \mathcal{M}_\loc (\R^\N)
\end{displaymath}
as we identify $f \in  L^{1}_{\loc} (\R^{\N})$ with the measure $f \,
\d x \in  \mathcal{M}_\loc (\R^\N)$.

Any Radon measure $\mu \in \mathcal{M}_\loc (\R^\N)$ can be
(uniquely) split as the difference of two non-negative, mutually
singular,  Radon measures
$\mu = \mu^{+} - \mu^{-}$ (the `Jordan decomposition' of $\mu$). Then we can define the Radon measure $|\mu|$, the `total variation of $\mu$', by setting
\begin{displaymath}
  |\mu|: =  \mu^{+} + \mu^{-}.
\end{displaymath}
Then for every  $\varphi \in C_{c}(\R^\N)$ and  $\mu \in
\mathcal{M}_\loc (\R^\N)$ we have
\be{dagger}
\left|\int_{\R^\N} \varphi \, \d \mu \right| \leq   \int_{\R^\N} |\varphi| \, \d|\mu| .
\ee

Finally  we recall the definition of
measures of bounded total variation.
Consider the space $C_{0}(\R^\N)$ of continuous functions converging
to $0$ as $|x|\to \infty$ with the $\sup$ norm ($C_{c}(\R^\N)$ is dense in this space).

\begin{theorem}
A linear mapping   $L\:C_{0}(\R^\N) \to \R$ is  continuous, 
iff  there exists a (signed)  Radon measure $\mu \in
\mathcal{M}_\loc (\R^\N)$  such that $|\mu|(\R^\N)<\infty$ and

\begin{displaymath}
  L(\varphi) = \int_{\R^\N} \varphi \, \d \mu\qquad\mbox{for every}\quad\varphi \in
C_{0}(\R^\N).
\end{displaymath}
\end{theorem}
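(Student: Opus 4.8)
The plan is to prove the two implications separately, treating the ``if'' direction as essentially a restatement of the bound \eqref{dagger} and concentrating the real work on the ``only if'' direction, where the task is to produce the representing measure and, crucially, to show that its total variation is \emph{finite}.

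For the sufficiency direction, suppose $\mu$ is a signed Radon measure with $|\mu|(\R^\N)<\infty$. For $\varphi\in C_0(\R^\N)$ the integral $\int_{\R^\N}\varphi\,\d\mu$ is well defined because $\varphi$ is bounded and $|\mu|$ is finite, and \eqref{dagger} (extended from $C_c$ to $C_0$ by the density of $C_c(\R^\N)$ in $C_0(\R^\N)$ together with dominated convergence) gives
$$|L(\varphi)|=\left|\int_{\R^\N}\varphi\,\d\mu\right|\le\int_{\R^\N}|\varphi|\,\d|\mu|\le\|\varphi\|_{\infty}\,|\mu|(\R^\N),$$
so $L$ is continuous with norm at most $|\mu|(\R^\N)$.

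For the necessity direction I would first pass to $C_c(\R^\N)$. If $|L(\varphi)|\le M\|\varphi\|_\infty$ for all $\varphi\in C_0(\R^\N)$, then for every compact $K$ and every $\varphi\in C_c(\R^\N)$ with $\supp(\varphi)\subseteq K$ one has $|L(\varphi)|\le M\sup_{x\in K}|\varphi(x)|$, so the restriction $L|_{C_c(\R^\N)}$ is continuous in the sense required by the representation theorem for $C_c(\R^\N)$ stated above, with the \emph{uniform} choice $C_K=M$. That theorem yields a unique signed Radon measure $\mu\in\M_\loc(\R^\N)$ with $L(\varphi)=\int_{\R^\N}\varphi\,\d\mu$ for all $\varphi\in C_c(\R^\N)$. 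The crux of the argument, and the main obstacle, is then to show $|\mu|(\R^\N)\le M$. For this I would use the duality description of the total variation: for any bounded open set $U$,
$$|\mu|(U)=\sup\left\{\int_{\R^\N}\varphi\,\d\mu:\varphi\in C_c(\R^\N),\ \supp(\varphi)\subseteq U,\ |\varphi|\le1\right\}.$$
The inequality $\ge$ here is immediate from \eqref{dagger}; the reverse inequality $\le$ is the technical heart and is obtained from the Jordan decomposition $\mu=\mu^+-\mu^-$. Using inner regularity one approximates the disjoint sets on which $\mu^+$ and $\mu^-$ concentrate by disjoint compact subsets $F_+,F_-$ of $U$, and Urysohn's lemma then produces $\varphi$ with $|\varphi|\le1$, equal to $+1$ on $F_+$ and $-1$ on $F_-$, for which $\int\varphi\,\d\mu$ approximates $\mu^+(U)+\mu^-(U)=|\mu|(U)$ to within any prescribed error. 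Granting this, every admissible $\varphi$ satisfies $\int_{\R^\N}\varphi\,\d\mu=L(\varphi)\le M$, whence $|\mu|(U)\le M$ for every bounded open $U$; taking $U=B(0,R)$ and letting $R\to\infty$, continuity from below of $|\mu|$ gives $|\mu|(\R^\N)=\lim_{R\to\infty}|\mu|(B(0,R))\le M<\infty$.

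It remains to upgrade the representation from $C_c(\R^\N)$ to all of $C_0(\R^\N)$. Given $\varphi\in C_0(\R^\N)$, choose $\varphi_n\in C_c(\R^\N)$ with $\varphi_n\to\varphi$ uniformly. Then $L(\varphi_n)\to L(\varphi)$ by continuity of $L$, while
$$\left|\int_{\R^\N}(\varphi_n-\varphi)\,\d\mu\right|\le\|\varphi_n-\varphi\|_\infty\,|\mu|(\R^\N)\longrightarrow0,$$
where the finiteness of $|\mu|$ just established is precisely what legitimises this last step. Passing to the limit gives $L(\varphi)=\int_{\R^\N}\varphi\,\d\mu$ for all $\varphi\in C_0(\R^\N)$, completing the proof.
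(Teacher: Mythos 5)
Your argument is correct, but there is nothing in the paper to compare it against: the paper does not prove this statement at all. It is recalled in Section \ref{sec:radon-measures} as a classical fact (the Riesz--Markov--Kakutani representation theorem for $C_{0}(\R^\N)$), with details deferred to the cited references \cite{BeneCzaja,Folland,FritzRoy,GiaqModicaSoucek}. What you have written is the standard textbook derivation, organised so that it leans on the paper's preceding theorem on $C_{c}(\R^\N)$: boundedness on $C_{0}(\R^\N)$ gives the uniform constant $C_{K}=M$, hence a representing Radon measure $\mu$ for the restriction to $C_{c}(\R^\N)$; the bound $|\mu|(U)\le M$ on bounded open sets comes from the duality formula for $|\mu|(U)$; continuity from below of the measure $|\mu|$ gives $|\mu|(\R^\N)\le M$; and density of $C_{c}(\R^\N)$ in $C_{0}(\R^\N)$ (noted in the paper) upgrades the representation to all of $C_{0}(\R^\N)$. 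Every step of this outline is sound, and the sufficiency direction is handled correctly as well.

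One detail you compress in what you call the technical heart: knowing that $\varphi=+1$ on $F_{+}$, $\varphi=-1$ on $F_{-}$, $|\varphi|\le1$ and $\supp\varphi\subseteq U$ is not by itself enough to conclude that $\int\varphi\,\d\mu$ approximates $|\mu|(U)$, since $\varphi$ is uncontrolled on $U\setminus(F_{+}\cup F_{-})$. The missing (routine) observation is that the contribution of that region is at most $|\mu|\big(U\setminus(F_{+}\cup F_{-})\big)\le\mu^{+}(U\setminus F_{+})+\mu^{-}(U\setminus F_{-})\le2\eps$, precisely because $F_{\pm}$ are chosen by inner regularity inside the two Hahn sets, where $\mu^{\mp}(F_{\pm})=0$; combined with $\mu(F_{+})=\mu^{+}(F_{+})\ge\mu^{+}(U)-\eps$ and $-\mu(F_{-})=\mu^{-}(F_{-})\ge\mu^{-}(U)-\eps$ this yields $\int\varphi\,\d\mu\ge|\mu|(U)-4\eps$. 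With that remark inserted your argument is complete, and it also makes clear that outer regularity is never needed, only inner regularity on Borel subsets of the bounded open set $U$, where $|\mu|$ is finite.
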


The quantity $\|\mu\|_{\BTV} = |\mu|(\R^\N) $ is
the \emph{total variation} of $\mu$ and is the norm of the functional $L$.
In other words
\begin{displaymath}
\mathcal{M}_{\BTV} (\R^\N) =  \Big(C_{0}(\R^\N)\Big)'
\end{displaymath}
is the Banach space  of Radon measures with bounded total variation.
It is then  immediate that $L^{1}(\R^\N) \subset
\mathcal{M}_{\BTV} (\R^\N)$, isometrically, and $\mathcal{M}_{\BTV}
(\R^\N) \subset \mathcal{M}_{\loc} (\R^\N)$. We discuss solutions of
the heat equation with initial data in $\M_\BTV  (\R^\N)$ in Lemma
\ref{lem:heat_total_variation}.

Note that the set of Radon measures is therefore distinct from the
class of tempered distributions on $\R^\N$, which are continuous
linear functionals on the Schwarz class  ${\mathscr S}(\R^\d)$: such functions are
smoother than functions in $C_c(\R^\d)$ but satisfy less stringent
growth conditions, so neither class is contained in the other.
Recall that $\mathscr{S}(\R^\N)$ is made up of $C^{\infty}(\R^{\N})$
functions such that for all multi-indices $\alpha, \beta$
\begin{displaymath}
  |x^{\alpha}| |D^{\beta} \varphi (x)| \to 0 \quad\mbox{as}\quad |x| \to \infty .
\end{displaymath}
The family of seminorms
\begin{displaymath}
  p_{\alpha,\beta}(\varphi) = \sup_{x \in \R^{\N}} (1+ |x^{\alpha}|) |D^{\beta} \varphi (x)|
\end{displaymath}
defines a locally-convex topology on ${\mathscr S}(\R^\N)$, and the tempered distributions are the dual space $\mathscr{S}'(\R^\d)$.

A tempered distribution $L\in  \mathscr{S}'(\R^\N)$ has order $(m,n)\in \NN \times \NN$ if for all
$\varphi \in  \mathscr{S}(\R^\N)$ and some constant $c>0$
\begin{displaymath}
  |\< L, \varphi \>| \leq c p_{\alpha,\beta}(\varphi)
\end{displaymath}
with $|\alpha|= m$ and $|\beta| = n$.

Since $(1+ x^{\alpha}) \varphi
(x) \in \mathscr{S}(\R^\N)$ for every $\varphi \in \mathscr{S}(\R^\d)$
and multi-index  $\alpha$ and  $\mathscr{S}(\R^\N)$ is dense in
$C_{0}(\R^\d)$, it follows that if $L\in  \mathscr{S}'(\R^\N)$ has order $(m,0)$
then $(1+ |x|^{2})^{-m/2} L$ is an element of $\mathcal{M}_{\BTV} (\R^\N)$. That is, $L$
can be identified with a measure $\mu \in \mathcal{M}_{\loc} (\R^{\N})$
such that
\be{Cmcond}
  \int_{\R^{\N}} (1+|x|^{2})^{-m/2} \, \d |\mu(x)| < \infty,
\ee
since
\begin{displaymath}
    |\< L, \varphi \>| =  |\< (1+ |x|^{2})^{-m/2} L, (1+
    |x|^{2})^{m/2}\varphi \>|\leq c p_{\alpha,0}(\varphi) \leq c
    \sup_{x \in \R^{\ }} |\xi (x)|,
\end{displaymath}
with $\xi(x)= (1+ |x|^{2})^{m/2}\varphi (x) \in C_{0}(\R^\d)$ and  $|\alpha|=m$.

Let us denote by $\mathscr{C}_{m}(\R^\d)$ the collection of all measures $\mu$ that satisfy (\ref{Cmcond}). Then any such $\mu$ defines a tempered distribution of order
$(m,0)$ since for any  $\varphi \in
\mathscr{S}(\R^\d)$ we have
\begin{align*}
\left|  \int_{\R^{\N}} \varphi (x) \, d \mu(x) \right| & =  \left|\int_{\R^{\N}} (1+ |x|^{2})^{m/2}\varphi(x)
                                              \, (1+ |x|^{2})^{-m/2}\, \d   \mu(x) \right|\\
&\leq
p_{\alpha, 0}(\varphi) \int_{\R^{\N}} (1+|x|^{2})^{-m/2} \, \d |\mu(x)|
\end{align*}
with $|\alpha|=m$. Hence $\mathscr{C}_{m}(\R^\d)$ is precisely the class of
tempered distributions of order $(m,0)$.

\section{Initial data in $\mathcal{M}_{\eps}(\R^{\N})$: existence and regularity}
\label{sec:existence_regularity}
\setcounter{equation}{0}

Throughout this paper we consider the Cauchy problem
\begin{equation} \label{eq:heat_equation}
 u_{t} - \Delta u =0,\ x\in\R^\N,\ t>0,\qquad u(x,0)= u_{0}(x) ,
\end{equation}
whose solutions we expect to be given in terms of the heat kernel by
\begin{displaymath} 
 u(x,t; u_{0})=S(t)u_{0} (x) = \frac{1}{(4\pi t)^{\N/2}} \int_{\R^\N} \e^{-|x-y|^2/4t}
u_0(y) \, \d y ,
\end{displaymath}
if $u_0\in L^1_\loc(\R^\N)$ or, more generally,  if  $u_{0}
\in \mathcal{M}_\loc (\R^\N)$ is a Radon measure,  by
\begin{equation} \label{eq:solution_heat_up_t=0}
u(x,t; u_{0})  = S(t)u_{0} (x) = \frac{1}{(4\pi t)^{\N/2}} \int_{\R^\N} \e^{-|x-y|^2/4t}
\, \d u_{0}(y) .
\end{equation}
Of course, it is entirely natural to consider sets of measures as initial conditions for the heat equation, since the heat kernel, which is smooth for all $t>0$, is precisely the solution when $u_0$ is the $\delta$ measure.

Notice that from \eqref{eq:solution_heat_up_t=0} and \eqref{dagger} we immediately obtain
\begin{displaymath}
  |S(t) u_{0}| \leq S(t) |u_{0}|, \qquad t>0, \quad u_{0}
\in \mathcal{M}_\loc (\R^\N) .
\end{displaymath}

We start with some estimates for the expression in
(\ref{eq:solution_heat_up_t=0}) 
 which show that the solution can be  essentially estimated
 by its  value  at $x=0$.

\begin{lemma} \label{lem:estimate_from_x=0}

If $u_0\in \mathcal{M}_\loc(\R^\N)$ and $u(x,t)$ is given by
\eqref{eq:solution_heat_up_t=0} then for any $a>1$ we have
\begin{equation}\label{bound_above_x=0}
|u(x,t,u_{0})|\le c_{\N,a}\,u(0,at,|u_{0}|)\,
\e^{\frac{|x|^{2}}{4(a-1)t}}\qquad \mbox{for all}\quad x\in\R^\N,\ t>0,
\end{equation}
where $c_{\N,z}:=z^{\N/2}$ for any $z>0$.

If in addition    $0\leq u_{0}\in \mathcal{M}_{\loc} (\R^\N)$ then for any $0<b<1<a$ we have
\begin{equation} \label{bound_below_above_x=0}
  c_{\N,b}\, u(0,bt)\, \e^{-\frac{|x|^{2}}{4(1-b)t}} \leq u(x,t) \leq
  c_{\N,a}\, u(0,at)\, \e^{\frac{|x|^{2}}{4(a-1)t}}\qquad \mbox{for
    all}\quad x\in\R^\N,\ t>0.
\end{equation}

\end{lemma}

\begin{proof}
For the upper bound we use the fact that for any $0<\delta<1$, 
\begin{equation}\label{sqlower}
|x-y|^{2} \geq  |y|^{2} + |x|^{2} - 2|y||x| \geq
  (1-\delta) |y|^{2} + (1-\frac{1}{\delta}) |x|^{2},
\end{equation}
  from which it follows that
$$
 | u(x,t,u_{0})| \leq \e^{(\frac{1}{\delta}-1) \frac{|x|^{2}}{4t}} \left(
  \frac{1}{(4\pi t)^{\N/2}} \int_{\R^\N} \e^{- (1-\delta)
    \frac{|y|^2}{4t}}
\, \d |u_0(y)| \right);
$$
taking $a= \frac{1}{1-\delta} >1$ yields (\ref{bound_above_x=0}).

For the lower bound when $u_{0}\geq 0$, we argue similarly, now using
the fact that for any $\delta >0$, 
$$
|x-y|^{2} \leq |x|^{2} + |y|^{2} + 2 |x||y| \leq
  (1+\delta)  |x|^{2} +  (1 + \frac{1}{\delta}) |y|^{2};
  $$
we obtain
\begin{displaymath}
  u(x,t) \geq \e^{ -(1+\delta) \frac{|x|^{2}}{4t}} \left(
  \frac{1}{(4\pi t)^{\N/2}} \int_{\R^\N} \e^{- (1+\frac{1}{\delta})
    \frac{|y|^2}{4t}}
\, \d u_0(y) \right)
\end{displaymath}
and then take $b= \frac{\delta}{1+\delta} <1$.
\end{proof}

We now introduce some classes of initial data that are particularly
suited to an analysis of solutions of the heat equation: for $\eps >0$
we define
\begin{equation}  \label{eq:space_L1eps}
L^{1}_{\eps} (\R^\N):=\left\{f\in L^1_\loc(\R^\N):\ \int_{\R^\N}
  \e^{-\eps |x|^{2}} |f(x)|\,  \d x < \infty\right\};
\end{equation}
with the norm
\begin{equation}  \label{eq:norm_L1eps}
\|f\|_{L^1_\eps (\R^\N)} := \left(\frac{\eps}{\pi}\right)^{\N/2}
\int_{\R^\N}\e^{-\eps|x|^2}|f(x)|\,\d x
\end{equation}
for which  a  positive constant function
has  norm equal to itself. 
For the case of measures for $\eps >0$
we define
\begin{equation}  \label{eq:space_Meps}
\mathcal{M}_{\eps} (\R^\N):=\left\{\mu \in \mathcal{M}_\loc(\R^\N):\ \int_{\R^\N}
  \e^{-\eps |x|^{2}} \,  \d |\mu( x)|  < \infty\right\};
\end{equation}
i.e. $\e^{-\eps |x|^{2}} \in L^{1}(d|\mu|)$,
with the norm
\begin{equation}  \label{eq:norm_Meps}
\|\mu\|_{\mathcal{M}_\eps (\R^\N)} := \left(\frac{\eps}{\pi}\right)^{\N/2}
\int_{\R^\N}\e^{-\eps|x|^2} \,\d |\mu(x)| .
\end{equation}
Obviously $L^{1}_{\eps}
(\R^\N) \subset \mathcal{M}_{\eps} (\R^\N)$ isometrically, that is,  if $f\in L^{1}_{\eps}
(\R^\N)$ then $\|f\|_{\mathcal{M}_\eps (\R^\N)} = \|f\|_{L^1_\eps
  (\R^\N)}$.  Also note that $\mathcal{M}_{\eps}(\R^{\N})$ and $L^{1}_{\eps} (\R^\N)$ are
increasing in $\eps>0$ and if $\eps_{1}<\eps_{2}$ then for $\mu \in
\mathcal{M}_{\eps_{1}} (\R^{\N})$
\begin{equation} \label{eq:increase_Meps_norm}
  \|\mu\|_{\mathcal{M}_{\eps_{2}} (\R^\N)}  \leq
  \left(\frac{\eps_{2}}{\eps_{1}}\right)^{\N/2} \|\mu\|_{\mathcal{M}_{\eps_{1}} (\R^\N)} .
\end{equation}
Finally $L^1_\eps (\R^\N)$ and $\mathcal{M}_\eps (\R^\N)$ with the
norms (\ref{eq:norm_L1eps}) and (\ref{eq:space_Meps}) respectively, are
Banach spaces, see Lemma
\ref{lem:Meps_banach}.

The following simple lemma demonstrates the relevance of the spaces
$L^1_\eps (\R^\N)$ and $\mathcal{M}_\eps (\R^\N)$  to the heat
equation. Note that the first part of
the statement does not require that $u_0$ is non-negative. We will improve on the first part of this lemma in
Proposition \ref{prop:semigroup_estimates}, obtaining bounds on $u(t)$
in the norm of $L^1_{\eps(t)}(\R^{\N})$.

\begin{lemma}\label{whyL1e}

Let  $u_0\in \mathcal{M}_\eps(\R^\N)$, set $T(\eps)=1/4\eps$,  and let  $u(x,t)$ be given by
\eqref{eq:solution_heat_up_t=0}. Then for  each $t\in (0, T(\eps))$ we have $u(t) \in
L^{1}_{\delta} (\R^\N)$ for any $\delta>\eps(t):=
\frac{1}{4(T(\eps)-t)} = \frac{\eps}{1-4\eps t}$.

Conversely, if  $0\leq u_{0}\in \mathcal{M}_{\loc} (\R^\N)$ and $u(x,t) < \infty$ for some
$x\in\R^\N$, $t>0$ then
$$
  u_{0} \in \mathcal{M}_{\eps}(\R^\N) \quad\mbox{for every}\quad \eps > 1/4t .
$$

\end{lemma}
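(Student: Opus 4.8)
The plan is to prove the two assertions separately, both resting on elementary Gaussian estimates. For the forward direction I would start from the pointwise bound $|S(t)u_0|\le S(t)|u_0|$ recorded just after \eqref{eq:solution_heat_up_t=0}, so that
\begin{displaymath}
|u(x,t)|\le (4\pi t)^{-\N/2}\int_{\R^\N}\e^{-|x-y|^2/4t}\,\d|u_0|(y).
\end{displaymath}
To show $u(t)\in L^1_\delta(\R^\N)$ I would estimate $\int_{\R^\N}\e^{-\delta|x|^2}|u(x,t)|\,\d x$, insert this bound, and apply Tonelli's theorem (the integrand is non-negative and $|u_0|$ is $\sigma$-finite, being finite on compacts) to interchange the $x$- and $y$-integrations. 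This reduces the matter to evaluating, for each fixed $y$, the pure Gaussian integral $\int_{\R^\N}\e^{-\delta|x|^2 - |x-y|^2/4t}\,\d x$.

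A standard completion of the square, coordinate by coordinate, gives
\be{gauss-int}
\int_{\R^\N}\e^{-\delta|x|^2 - |x-y|^2/4t}\,\d x = \Big(\frac{\pi}{\delta + \tfrac{1}{4t}}\Big)^{\N/2}\,\e^{-\frac{\delta}{1+4\delta t}|y|^2}.
\ee
Thus, up to a finite constant depending on $\N,\delta,t$, the quantity I must control is $\int_{\R^\N}\e^{-\frac{\delta}{1+4\delta t}|y|^2}\,\d|u_0|(y)$, which is finite as soon as the exponent coefficient satisfies $\frac{\delta}{1+4\delta t}\ge\eps$, since then the integrand is dominated by $\e^{-\eps|y|^2}$ and $u_0\in\M_\eps(\R^\N)$. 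The crux of the argument — and essentially the only real computation — is to check that the strictly increasing function $\delta\mapsto \frac{\delta}{1+4\delta t}$ equals $\eps$ exactly when $\delta = \frac{\eps}{1-4\eps t} = \eps(t)$; hence $\delta>\eps(t)$ forces $\frac{\delta}{1+4\delta t}>\eps$, which is precisely what is needed, and the constraint $t\in(0,T(\eps))$ is what keeps $\eps(t)$ finite and positive. (Alternatively one may bypass \eqref{gauss-int} by feeding the upper bound \eqref{bound_above_x=0} of Lemma \ref{lem:estimate_from_x=0} into $\int\e^{-\delta|x|^2}|u(x,t)|\,\d x$ and letting the parameter $a\uparrow T(\eps)/t$, which recovers the same threshold $\eps(t)$ in the limit.)

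For the converse, suppose $0\le u_0\in\M_\loc(\R^\N)$ and $u(x_0,t_0)<\infty$; since $u_0\ge0$ this says exactly that $\int_{\R^\N}\e^{-|x_0-y|^2/4t_0}\,\d u_0(y)<\infty$. Fix $\eps>1/4t_0$. I would compare the two Gaussians directly: expanding $|x_0-y|^2 = |y|^2 - 2x_0\cdot y + |x_0|^2$, the function
\begin{displaymath}
y\longmapsto \frac{|x_0-y|^2}{4t_0} - \eps|y|^2
\end{displaymath}
is a quadratic whose leading coefficient $\frac{1}{4t_0}-\eps$ is \emph{strictly negative}, so it is bounded above by some finite $M$. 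This yields the pointwise domination $\e^{-\eps|y|^2}\le\e^{M}\e^{-|x_0-y|^2/4t_0}$ for all $y\in\R^\N$, and integrating against $\d u_0$ gives $\int_{\R^\N}\e^{-\eps|y|^2}\,\d u_0(y)\le \e^{M}(4\pi t_0)^{\N/2}\,u(x_0,t_0)<\infty$, i.e.\ $u_0\in\M_\eps(\R^\N)$. The only point requiring care is the observation that the hypothesis $\eps>1/4t_0$ is exactly what makes the leading coefficient negative, and hence the quadratic bounded above; for $\eps\le 1/4t_0$ no such domination holds, which is consistent with the sharpness of the threshold. Neither direction presents a serious obstacle; the main thing to get right is the bookkeeping that ties the Gaussian exponent coefficient to the stated critical value $\eps(t)$.
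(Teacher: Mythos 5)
Your proof is correct, but it is organised differently from the paper's. The paper proves both directions by citing Lemma \ref{lem:estimate_from_x=0}: for the forward direction it integrates the pointwise upper bound \eqref{bound_above_x=0} against $\e^{-\delta|x|^2}$ and lets the parameter $a$ approach $T(\eps)/t$ (this is exactly the alternative you mention parenthetically), and for the converse it evaluates the lower bound in \eqref{bound_below_above_x=0} at $x=0$ and time $bt$, then lets $b\uparrow1$ to reach every $\eps>1/4t$. You instead work directly with the kernel: Tonelli plus the exact Gaussian convolution identity for the forward direction, and a quadratic-domination argument ($\e^{-\eps|y|^2}\le\e^M\e^{-|x_0-y|^2/4t_0}$ whenever $\eps>1/4t_0$, since the relevant quadratic has negative leading coefficient) for the converse. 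Both routes rest on the same elementary inequalities — indeed the paper's Lemma \ref{lem:estimate_from_x=0} is itself proved by the Young-type splitting of $|x-y|^2$ that your domination argument replaces. What your route buys: the completed-square identity gives the endpoint case $\delta=\eps(t)$ with the explicit bound $\|u(t)\|_{L^1_{\eps(t)}}\le\|u_0\|_{\M_\eps}$, which is sharper than the lemma's statement and is precisely what the paper later proves separately as Proposition \ref{prop:semigroup_estimates}(i); your converse is also self-contained, needing no auxiliary lemma. What the paper's route buys: it reuses a lemma needed many times elsewhere, and it avoids having to justify the interchange of integrals (though your appeal to Tonelli, with $|u_0|$ $\sigma$-finite as a Radon measure on the $\sigma$-compact space $\R^\N$, is perfectly sound).
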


\begin{proof}
Taking $u_0\in \mathcal{M}_\eps(\R^\N)$ we use
the upper bound  \eqref{bound_above_x=0} from Lemma \ref{lem:estimate_from_x=0}  to obtain
\begin{displaymath} 
  \int_{\R^\N} \e^{-\delta |x|^{2}}|u(x,t)| \, \d x \leq c_{\N,a}\,u(0,at,|u_{0}|)
  \int_{\R^\N}  \e^{-(\delta - \frac{1}{4(a-1)t}) |x|^{2}}  \, \d x ,
\end{displaymath}
where we  choose any $1<a<T(\eps)/t$. Given such a choice of $a$, to
ensure that the integral is finite we require
$\delta>\frac{1}{4(a-1)t}$.
Noting that the right-hand side of this expression can be made arbitrarily close to
$\frac{1}{4(T(\eps)-t)}$
it follows that $u(t)\in L^1_\delta(\R^\N)$ for any $\delta>\eps(t):=1/4(T(\eps)-t)=\eps/(1-4\eps t)$, as claimed.

Conversely, from the lower bound in (\ref{bound_below_above_x=0}), if
$0\leq u(x,t) < \infty$ for some
$x\in\R^\N$, $t>0$ then for any $0<b<1$
\begin{displaymath}
  u(0,bt) = \frac{1}{(4\pi bt)^{\N/2}} \int_{\R^\N} \e^{-|y|^2/4bt}
\, \d u_0(y) <\infty,
\end{displaymath}
i.e.\ $u_{0} \in \mathcal{M}_{1/4bt}(\R^\N)$. Since we can take any $0<b<1$,
it follows that $u_0\in \mathcal{M}_\eps(\R^\N)$ for any $\eps>1/4t$.
\end{proof}

We reserve the notation $T(\eps)$ and $\eps(t)$ in what follows for the functions defined in the statement of this lemma; for the latter this is something of an abuse of notation, since $\eps(t)$ is really a function that depends on a particular choice of $\eps$ (as well as $t$):
\begin{equation}\label{epsilon-t}
T(\eps)=\frac{1}{4\eps}\qquad\mbox{and}\qquad\eps(t):=\frac{1}{4(T(\eps)-t)}
=\frac{\eps}{1-4\eps t},\quad 0\le t<T(\eps).
\end{equation}

At something of an opposite extreme, the following lemma - which we
will require many times in what follows - allows us to capture some of
the ways in which any solution starting from a continuous function
with compact support retains a trace of its initial data; more or less
it satisfies the same decay as the heat kernel,
$\sim t^{-\N/2}\e^{-|x|^2/4t}$.

\begin{lemma}\label{heat_solution_Cc}
  If $\varphi\in C_c(\R^\N)$ with ${\rm supp}\,\varphi\subset B(0,R)$
  then  for any $0<\delta <1$ and $t>0$
  \begin{itemize}
  \item [\rm(i)] $  |S(t)\varphi(x)|\le\begin{cases}
C_{\varphi}(t) \e^{-\gamma(t) |x|^2}&|x|\ge 2R/\delta\\
\|\varphi\|_{L^\infty(\R^{\N}) } &|x|\le 2R/\delta,
\end{cases}$\\
where
$$
C_{\varphi}(t)  = \frac{\e^{-3(1-\delta)R^2/4\delta t}}{(4\pi
  t)^{\N/2}} \|\varphi\|_{L^1(\R^{\N})}\qquad\mbox{and}\qquad\gamma(t)= \frac{(1-\delta)^2}{4t}.
  $$

  \item [\rm(ii)] $|S(t)\varphi(x)-\varphi (x) |\le\begin{cases}
C_{\varphi}(t) \e^{-\gamma(t) |x|^2}&|x|\ge 2R/\delta\\
\tilde C_{\varphi}(t) &|x|\le 2R/\delta,
\end{cases}$\\
with $C_{\varphi}(t)$ and $\gamma(t)$ as above and $\tilde C_{\varphi}(t)
\to 0$ as $t\to 0$.

  \item [\rm(iii)] In particular, for any $\eps>0$ and $0< T <T(\eps)=\frac{1}{4\eps}$ there exists
  $\gamma=\gamma(T,\eps)>0$ such that
  \begin{displaymath} 
  \e^{\eps|x|^2}|S(t)\varphi (x)|\le
  C_{T,\varphi,\eps}\e^{-\gamma|x|^2}, \quad x \in \R^{\N}
  \quad\mbox{for every }t\in[0,T].
  \end{displaymath}
In addition,
\begin{displaymath} 
\e^{\eps|x|^{2}} \big (S(t)
\varphi(x)-\varphi(x) \big) \to 0\quad\mbox{ uniformly in }\R^\N\quad\mbox{as}\quad t\to 0.
\end{displaymath}
  \end{itemize}

\end{lemma}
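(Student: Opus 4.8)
The plan is to treat the two regimes $|x|\le 2R/\delta$ and $|x|\ge 2R/\delta$ separately throughout, and to build part (iii) directly on parts (i) and (ii). For part (i), the bound on the inner region is immediate: since the heat kernel is a probability density, $|S(t)\varphi(x)|\le\|\varphi\|_{L^\infty(\R^{\N})}$ for every $x$ and $t>0$. For the outer region the key point is that for $y\in\supp\varphi\subset B(0,R)$ and $|x|\ge 2R/\delta$ one has $|y|\le R\le\tfrac{\delta}{2}|x|$, and I would use this to produce the pointwise lower bound $|x-y|^2\ge(1-\delta)^2|x|^2+\tfrac{3(1-\delta)}{\delta}R^2$. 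Starting from $|x-y|\ge|x|-|y|\ge|x|-R$ and expanding, this reduces to the scalar inequality $\delta(2-\delta)|x|^2-2R|x|+R^2\ge\tfrac{3(1-\delta)}{\delta}R^2$, whose left-hand side is an upward parabola in $|x|$ with favourable value and derivative at the endpoint $|x|=2R/\delta$, so it holds for all $|x|\ge 2R/\delta$. Factoring $\e^{-|x-y|^2/4t}\le\e^{-3(1-\delta)R^2/4\delta t}\,\e^{-\gamma(t)|x|^2}$ with $\gamma(t)=(1-\delta)^2/4t$ and integrating against $|\varphi|$ then gives exactly $C_\varphi(t)\,\e^{-\gamma(t)|x|^2}$.

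For part (ii) I would observe that $2R/\delta>R$ (as $0<\delta<1$), so on the outer region $\varphi(x)=0$ and the bound coincides with that of (i). On the inner region the statement is simply the classical uniform convergence $S(t)\varphi\to\varphi$ as $t\to0$: since $\varphi\in C_c(\R^{\N})$ is uniformly continuous and the kernel integrates to $1$, writing $S(t)\varphi(x)-\varphi(x)=\tfrac{1}{(4\pi t)^{\N/2}}\int\e^{-|x-y|^2/4t}(\varphi(y)-\varphi(x))\,\d y$ and splitting into $|x-y|<\eta$ and $|x-y|\ge\eta$ yields a bound uniform in $x$. I would therefore take $\tilde C_\varphi(t):=\|S(t)\varphi-\varphi\|_{L^\infty(\R^{\N})}$, which tends to $0$.

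For part (iii), multiply the bound in (i) by $\e^{\eps|x|^2}$, so on the outer region $\e^{\eps|x|^2}|S(t)\varphi(x)|\le C_\varphi(t)\,\e^{(\eps-\gamma(t))|x|^2}$. The crucial step is to fix $\delta\in(0,1)$ small enough that $(1-\delta)^2>4\eps T=T/T(\eps)$, which is possible precisely because $T<T(\eps)$; then $\gamma(t)=(1-\delta)^2/4t\ge(1-\delta)^2/4T>\eps$ for all $t\in(0,T]$, and setting $\gamma:=(1-\delta)^2/4T-\eps>0$ gives $\eps-\gamma(t)\le-\gamma$. Since $C_\varphi(t)$ behaves like $\e^{-c/t}t^{-\N/2}$ it is bounded on $(0,T]$ (the exponential beats the polynomial as $t\to0$), so the outer region obeys $C_{T,\varphi,\eps}\,\e^{-\gamma|x|^2}$; the inner region $|x|\le 2R/\delta$, together with the case $t=0$ where $\varphi$ has compact support, is controlled by a constant and absorbed into $C_{T,\varphi,\eps}$ after enlarging it. For the final convergence claim I would split once more: on $|x|\ge 2R/\delta$, for $t$ small enough that $\gamma(t)>\eps$ the quantity $\e^{\eps|x|^2}|S(t)\varphi(x)|$ is maximised at $|x|=2R/\delta$ and bounded by $C_\varphi(t)\,\e^{(\eps-\gamma(t))(2R/\delta)^2}\to0$, while on $|x|\le 2R/\delta$ it is at most $\e^{\eps(2R/\delta)^2}\tilde C_\varphi(t)\to0$ by (ii).

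The main obstacle is the elementary but slightly delicate algebra in (i): arranging the split of $|x-y|^2$ so that it yields simultaneously the constant factor $\e^{-3(1-\delta)R^2/4\delta t}$ and the Gaussian factor $\e^{-\gamma(t)|x|^2}$ with the exact stated constants. Everything in (ii) and (iii) is then bookkeeping, the single genuinely important choice being that of $\delta$ small in (iii), where the hypothesis $T<T(\eps)$ enters in an essential way.
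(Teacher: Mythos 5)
Your proposal is correct and follows essentially the same route as the paper: the same two-region split, the same key inequality $|x-y|^2\ge(1-\delta)^2|x|^2+\tfrac{3(1-\delta)}{\delta}R^2$ on $|x|\ge 2R/\delta$ (the paper reaches it via $|x-y|^2\ge(1-\delta)|x|^2-(\tfrac1\delta-1)R^2$ rather than your expansion of $(|x|-R)^2$, but with identical constants), and the same choice of $\delta$ with $(1-\delta)^2/4T>\eps$ in part (iii). The only cosmetic difference is that in (ii) you prove the uniform convergence $S(t)\varphi\to\varphi$ directly from uniform continuity, where the paper cites the ${\rm BUC}(\R^{\N})$ literature.
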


\begin{proof}
For any $\varphi \in C_{c}(\R^\N)$
with support in the ball $B(0,R)$, we have
\begin{equation} \label{eq:solution_heat_C0}
  |S(t) \varphi(x)|\le S(t)|\varphi(x)| = \frac{1}{(4\pi t)^{\N/2}} \int_{B(0,R)} \e^{-\frac{|x-y|^2}{4t}}
|\varphi(y)| \,\d y;
\end{equation}
using again
$$
|x-y|^{2} \geq 
(1-\delta) |x|^{2}
-\left(\frac{1}{\delta}-1\right) |y|^{2} \geq (1-\delta) |x|^{2}
-\left(\frac{1}{\delta}-1\right) R^{2}
$$
for any $0<\delta <1$, it follows that 
\begin{equation} \label{eq:point_bound_heat_C0}
  0\leq  S(t) |\varphi (x)| \leq  \frac{ \e^{-(1-\delta)\frac{|x|^2}{4t} + (\frac{1}{\delta}-1) \frac{R^2}{4t}} }{(4\pi t)^{\N/2}}
 \int_{B(0,R)}
|\varphi(y)| \,\d y  =\frac{\e^{-\frac{(1-\delta)}{4t} (|x|^{2} - \frac{R^2}{\delta})}}{(4\pi t)^{\N/2}}
  I(\varphi),
\end{equation}
where $I(\varphi)=\|\varphi\|_{L^1(\R^{\N})}$.

Now note that
\begin{displaymath}
  |x|^{2} -\frac{R^{2}}{\delta} \geq (1-\delta) |x|^{2}+\frac{3R^2}{\delta}
\end{displaymath}
if $|x|\ge 2R/\delta$, and hence for any such $x$ we obtain
$$
 0\leq |S(t) \varphi (x)| \leq\frac{\e^{-3(1-\delta)R^2/4\delta t}}{(4\pi t)^{\N/2}}
  \e^{-\frac{(1-\delta)^2}{4t}|x|^{2}} I(\varphi).
  $$
Since also   $\|S(t)\varphi\|_{L^\infty(\R^{\N})}\le\|\varphi\|_{L^\infty(\R^{\N})}$ for
all $t\ge0$, we get part (i).

Now, observe that for $|x|\ge 2R/\delta$ we get the same upper bound for $ |S(t) \varphi (x) -\varphi(x)|$
as above  and
 since as $\varphi \in {\rm
BUC}(\R^{\N})$ we know from e.g. \cite{Mora,HKMM96,L}
that $S(t)\varphi-\varphi
\to 0$ uniformly in $\R^{\N}$ as $t\to 0$. Hence we get part (ii).

Now fix $\eps>0$ and $0<T<T(\eps)$; we choose $0<\delta<1$ such that
$$
\gamma:=\frac{(1-\delta)^2}{4T}-\eps>0,
$$
i.e.\ so that for all $0\le t\le T$ we have
$(1-\delta)^2/4t\ge(1-\delta)^2/4T=\eps+\gamma$; note that $\gamma$
and $\delta$ can be chosen explicitly in such a way that they depend
only on $T$ and $\eps$.  Then parts (i) and (ii) give part (iii).
\end{proof}

Notice that in  particular if $u_0\in\M_\eps$ and $\varphi$ is as in the previous lemma then
\begin{equation} \label{eq:u0_tested_with_heat}
\int_{\R^\N} S(t) \varphi\,\d u_{0} = \int_{\R^\N} \e^{\eps|x|^{2}}
S(t) \varphi  (x)\,  \e^{-\eps|x|^{2}}
\,\d u_{0}(x)
\end{equation}
 is well defined for all $0\leq t \leq T<T(\eps)$.

The next preparatory result shows that the solution of the heat
equation for an initial condition that decays like a quadratic exponential preserves
this sort of decay, but with a rate that degrades in time.

\begin{lemma} \label{lem:estimate_fast_decay_initialdata}
If $\varphi \in C_{0}(\R^{\N})$ with $|\varphi (x)| \leq A
\e^{-\gamma|x|^{2}}$, $x\in \R^{\N}$, then $u(t)= S(t) \varphi$
satisfies
\begin{displaymath}
  |u(x,t)| \leq \frac{A}{(1+4\pi \gamma t)^{\N/2}}
  \e^{-\frac{\gamma}{1+4 \gamma t} |x|^{2}}, \qquad x\in \R^{\N},
  \quad t>0.
\end{displaymath}
\end{lemma}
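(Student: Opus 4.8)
The plan is to reduce the whole statement to the exact evaluation of a single Gaussian convolution. Since $|S(t)\varphi|\le S(t)|\varphi|$ and the pointwise majorant $|\varphi(y)|\le A\e^{-\gamma|y|^2}$ holds, positivity of the heat kernel gives
\begin{equation*}
|u(x,t)|\le \frac{A}{(4\pi t)^{\N/2}}\int_{\R^\N}\e^{-|x-y|^2/4t}\,\e^{-\gamma|y|^2}\,\d y ,
\end{equation*}
so it is enough to evaluate the right-hand side; indeed the asserted bound holds with equality for the Gaussian majorant itself, which is why both the decay rate and the constant can be pinned down exactly rather than merely estimated.

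First I would complete the square in $y$. Writing the exponent as $-\tfrac{|x|^2}{4t}+\tfrac{x\cdot y}{2t}-a|y|^2$ with $a=\tfrac1{4t}+\gamma=\tfrac{1+4\gamma t}{4t}$, the linear term in $y$ is removed by recentring the Gaussian, and the elementary identity $\int_{\R^\N}\e^{-a|y|^2+b\cdot y}\,\d y=(\pi/a)^{\N/2}\e^{|b|^2/4a}$ (here $b=x/2t$) evaluates the integral in closed form. Equivalently, one may invoke the fact that convolution with the heat kernel maps centred Gaussians to centred Gaussians with variances adding: the majorant has variance $1/2\gamma$ per coordinate and the kernel variance $2t$, so the output is a centred Gaussian of variance $2t+1/2\gamma$ per coordinate, and one has only to track the normalising constants.

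Collecting exponents, the coefficient of $|x|^2$ becomes $\tfrac1{4t}\bigl(\tfrac1{1+4\gamma t}-1\bigr)=-\tfrac{\gamma}{1+4\gamma t}$, which is precisely the claimed decay rate, while the prefactor $(4\pi t)^{-\N/2}(\pi/a)^{\N/2}$ collapses to $(1+4\gamma t)^{-\N/2}$. Substituting back yields the estimate for all $x\in\R^\N$, $t>0$.

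There is no genuine obstacle here: the entire content is the closed-form value of a Gaussian integral, and the only care needed is the bookkeeping of constants when completing the square — in particular, seeing how the heat-kernel normalisation $(4\pi t)^{-\N/2}$ cancels against the $(\pi/a)^{\N/2}$ generated by the $y$-integration. The one point I would double-check is the numerical factor in the denominator: the direct computation above produces $(1+4\gamma t)^{\N/2}$, so the $\pi$ appearing in $(1+4\pi\gamma t)^{\N/2}$ looks like a typographical slip.
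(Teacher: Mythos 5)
Your proof is correct and follows essentially the same route as the paper: bound $|\varphi|$ pointwise by the Gaussian majorant, complete the square in $y$, and evaluate the resulting Gaussian integral in closed form (the paper recentres the Gaussian explicitly, you invoke the equivalent identity $\int_{\R^\N}\e^{-a|y|^2+b\cdot y}\,\d y=(\pi/a)^{\N/2}\e^{|b|^2/4a}$). You are also right to flag the $\pi$ as a typographical slip: the paper's own computation yields the prefactor $(1+4\gamma t)^{-\N/2}$, and the stated bound with $(1+4\pi\gamma t)^{-\N/2}$ would in fact be false for $\varphi(x)=A\e^{-\gamma|x|^2}$ itself, for which the convolution is computed exactly.
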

\begin{proof}
  Note that completing the square yields
  \begin{displaymath}
       \frac{|x-y|^{2}}{4t} + \gamma |y|^{2}  =
  \frac{1+4 \gamma t}{4t}  \left|y-  \frac{1}{1+4\gamma t} x\right|^{2} +
  \frac{\gamma |x|^{2}}{1+4\gamma t}
  \end{displaymath}
and then
\begin{displaymath}
  |u(x,t)| \leq \frac{A}{(4 \pi t)^{\N/2}} \e^{-\frac{\gamma
      |x|^{2}}{1+4\gamma t} } \int_{\R^{\N}} \e^{- \frac{1+4 \gamma
    t}{4t}  \left|y-  \frac{1}{1+4\gamma t} x\right|^{2}} \, \d y = \frac{A}{(4 \pi t)^{\N/2}} \e^{-\frac{\gamma
      |x|^{2}}{1+4\gamma t} } \int_{\R^{\N}} \e^{- \frac{1+4 \gamma
    t}{4t}  |y|^{2}} \, \d y
\end{displaymath}
and the estimate follows.
\end{proof}

As a consequence, for any $u_{0}\in \mathcal{M}_{\eps}(\R^{\N})$ and
$\varphi$ that decays sufficiently fast, $u_{0}$ and $S(t)\varphi$ can
be integrated against each other for some time, see (\ref{eq:u0_tested_with_heat}).
In fact the following symmetry property holds.

\begin{lemma}\label{lem:preparing_4_Fubini}
Assume that  $\mu\in \mathcal{M}_{\eps}(\R^{\N})$ and  $\phi \in C_{0}(\R^{\N})$ is such that $|\phi (x)| \leq A
\e^{-\gamma|x|^{2}}$, $x\in \R^{\N}$ with $\gamma > \eps$.

Then for every $0<t< T(\eps) -T(\gamma) = \frac{1}{4\eps}-\frac{1}{4\gamma}$
\begin{displaymath}
  \int_{\R^{\N}}  \int_{\R^{\N}} K(x-y,t) |\phi(x)| \, \, \d
  |\mu(y)| \leq   (4\eps t)^{-\N/2}    \|\mu\|_{\mathcal{M}_{\eps}(\R^{\N})}  \int_{\R^{\N}}
    \e^{\eps(t) |x|^{2}} |\phi(x)| \, \d x .
\end{displaymath}
where $K(x,t)=(4\pi t)^{-\N/2}
\e^{-\frac{|x|^2}{4t}}$ is the heat kernel and $\eps(t)= \frac{1}{4(T(\eps)-t)}=
\frac{\eps }{1 -4\eps t}$.

In particular,  for  $0<t< T(\eps) -T(\gamma) = \frac{1}{4\eps}-\frac{1}{4\gamma}$
\begin{equation} \label{Fubbini_4-S(t)}
    \int_{\R^{\N}} \phi\, S(t)\mu  =  \int_{\R^{\N}} S(t) \phi \,
    \d \mu .
\end{equation}

\end{lemma}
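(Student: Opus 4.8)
The plan is to obtain the stated inequality by recognizing the inner integral as the semigroup acting on $|\mu|$ and then invoking the pointwise upper bound of Lemma~\ref{lem:estimate_from_x=0} with a carefully tuned parameter; the Fubini identity~\eqref{Fubbini_4-S(t)} will then follow directly from the absolute convergence this inequality provides. First I would note that, since the inner integral is taken against the non-negative measure $|\mu|$,
\[
\int_{\R^\N} K(x-y,t)\,\d|\mu|(y) = S(t)|\mu|(x),
\]
so that the double integral on the left is exactly $\int_{\R^\N} S(t)|\mu|(x)\,|\phi(x)|\,\d x$ (the outer variable integrated against Lebesgue measure).

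The crux is to estimate $S(t)|\mu|(x)$ sharply. Applying the upper bound \eqref{bound_above_x=0} to the non-negative datum $|\mu|$ gives, for any $a>1$,
\[
S(t)|\mu|(x) \le c_{\N,a}\, S(at)|\mu|(0)\, \e^{\frac{|x|^2}{4(a-1)t}}.
\]
The key idea is the choice $a = 1/(4\eps t)$, which is admissible (i.e.\ $a>1$) precisely when $t<T(\eps)$, and for which $at = T(\eps) = 1/4\eps$. With this choice three quantities align simultaneously: $c_{\N,a}=a^{\N/2}=(4\eps t)^{-\N/2}$; the value at the origin becomes, by \eqref{eq:norm_Meps},
\[
S(at)|\mu|(0)=\Big(\tfrac{\eps}{\pi}\Big)^{\N/2}\int_{\R^\N}\e^{-\eps|y|^2}\,\d|\mu|(y)=\|\mu\|_{\mathcal{M}_\eps(\R^\N)};
\]
and the exponential rate simplifies to $\frac{1}{4(a-1)t}=\frac{\eps}{1-4\eps t}=\eps(t)$. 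Hence $S(t)|\mu|(x)\le (4\eps t)^{-\N/2}\|\mu\|_{\mathcal{M}_\eps(\R^\N)}\,\e^{\eps(t)|x|^2}$, and multiplying by $|\phi(x)|$ and integrating in $x$ yields the stated bound. The one point needing care is finiteness of the right-hand side: since $|\phi(x)|\le A\e^{-\gamma|x|^2}$, the integral $\int_{\R^\N}\e^{\eps(t)|x|^2}|\phi(x)|\,\d x$ converges exactly when $\eps(t)<\gamma$, i.e.\ when $T(\eps)-t>T(\gamma)$. This is the source of the time restriction $0<t<T(\eps)-T(\gamma)$ and is the only mildly delicate step; there is no deeper obstacle, the whole estimate being essentially an optimized reading of Lemma~\ref{lem:estimate_from_x=0}.

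Finally, for \eqref{Fubbini_4-S(t)} I would write both sides as iterated integrals of the single function $(x,y)\mapsto K(x-y,t)\phi(x)$, using that $K(\cdot,t)$ is even so that $S(t)\phi(y)=\int_{\R^\N}K(x-y,t)\phi(x)\,\d x$ and $S(t)\mu(x)=\int_{\R^\N}K(x-y,t)\,\d\mu(y)$. The integrand is continuous, hence jointly Borel measurable, while $|\mu|$ is $\sigma$-finite (it is finite on compact sets and $\R^\N$ is $\sigma$-compact), as is Lebesgue measure. Tonelli's theorem then applies to $|K(x-y,t)\phi(x)|$ against $\d x\times\d|\mu|$, and the resulting double integral is precisely the quantity just shown to be finite. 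Consequently Fubini's theorem for the signed measure $\mu=\mu^+-\mu^-$ permits exchanging the order of integration of $K(x-y,t)\phi(x)$, giving exactly $\int_{\R^\N}\phi\,S(t)\mu=\int_{\R^\N}S(t)\phi\,\d\mu$.
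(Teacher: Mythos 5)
Your proposal is correct, and the inequality you obtain is identical to the paper's, but you reach it by a slightly different (and equivalent) route. The paper works directly under the double integral: it writes $K(x-y,t)\e^{\eps|y|^2}\e^{-\eps|y|^2}$, completes the square in the exponent, $\frac{|x-y|^{2}}{4t}-\eps|y|^{2}=\frac{1-4\eps t}{4t}\bigl|y-\frac{x}{1-4\eps t}\bigr|^{2}-\frac{\eps|x|^{2}}{1-4\eps t}$, bounds the resulting Gaussian factor by $1$, and factors the integral, which produces exactly the constant $(4\eps t)^{-\N/2}$ and the weight $\e^{\eps(t)|x|^2}$. You instead recycle the pointwise bound \eqref{bound_above_x=0} of Lemma \ref{lem:estimate_from_x=0} with the tuned choice $a=1/(4\eps t)$, under which $at=T(\eps)$, $c_{\N,a}=(4\eps t)^{-\N/2}$, $u(0,T(\eps),|\mu|)=\|\mu\|_{\mathcal{M}_\eps(\R^\N)}$ and $\frac{1}{4(a-1)t}=\eps(t)$; this is legitimate (the bound holds for non-negative data with no finiteness hypothesis, and $t<T(\eps)-T(\gamma)<T(\eps)$ makes $a>1$ admissible), and it yields the same pointwise estimate $S(t)|\mu|(x)\le (4\eps t)^{-\N/2}\|\mu\|_{\mathcal{M}_\eps(\R^\N)}\e^{\eps(t)|x|^2}$. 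The two arguments rest on the same inequality—the Young-type bound $|x-y|^2\ge(1-\delta)|y|^2-(\tfrac1\delta-1)|x|^2$ used to prove Lemma \ref{lem:estimate_from_x=0} is precisely the completion-of-squares identity with the Gaussian term dropped—so nothing is lost or gained in sharpness; what your version buys is reuse of an established lemma and an explicit statement of the pointwise smoothing bound, at the cost of needing Tonelli to swap the order of the iterated integral (since your bound naturally estimates the $x$-integral of $S(t)|\mu|\,|\phi|$ rather than the order written in the statement). Your treatment of the final Fubini step, via $\mu=\mu^+-\mu^-$ with joint measurability and $\sigma$-finiteness of $|\mu|$ spelled out, is a more careful rendering of what the paper dispatches with ``the rest follows from Fubini's theorem.''
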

\begin{proof}
  Notice that
  \begin{displaymath}
    I=  \int_{\R^{\N}}  \int_{\R^{\N}} K(x-y,t) |\phi(x)| \, \d x\, \d
  |\mu(y)| =  \int_{\R^{\N}}  \int_{\R^{\N}} K(x-y,t)
  \e^{\eps|y|^{2}} |\phi(x)|
 \e^{-\eps|y|^{2}} \, \d x \, \d  |\mu(y)|
  \end{displaymath}
and completing the square
  \begin{displaymath}
       \frac{|x-y|^{2}}{4t} -\eps |y|^{2}  =
  \frac{1-4 \eps t}{4t}  \left|y -  \frac{1}{1-4 \eps t} x\right|^{2} -
  \frac{\eps |x|^{2}}{1 -4\eps t} .
  \end{displaymath}
Hence
\begin{align*}
  I &\leq (4\pi t)^{-\N/2} \int_{\R^{\N}}  \int_{\R^{\N}}   \e^{
    -\frac{1-4 \eps t}{4t}  \left|y -  \frac{1}{1-4 \eps t}
      x\right|^{2}} \e^{\frac{\eps |x|^{2}}{1 -4\eps t}} |\phi(x)|
 \e^{-\eps|y|^{2}} \,  \d x\, \d  |\mu(y)| \\
&\leq   (4\eps t)^{-\N/2}    \|\mu\|_{\mathcal{M}_{\eps}(\R^{\N})}  \int_{\R^{\N}}
    \e^{\frac{\eps |x|^{2}}{1 -4\eps t}} |\phi(x)| \, \d x,
\end{align*}
which is finite as long as $\eps(t) < \gamma$, that is  $0<t< T(\eps)
-T(\gamma) = \frac{1}{4\eps}-\frac{1}{4\gamma}$.

The rest follows from Fubini's theorem.
\end{proof}

We can now show that for $u_0\in \mathcal{M}_\eps (\R^\N)$ [there is no requirement for $u_0$ to be non-negative] the
function defined in
(\ref{eq:solution_heat_up_t=0}) is indeed the solution of the heat
equation on the time interval $(0,1/4\eps)$, and satisfies the initial
data in the sense of measures. There are, of course, many classical
results on the validity of the heat kernel representation, but the
proof that follows has to be particularly tailored to $\mathcal{M}_\eps(\R^\N)$
initial data, since this allows for significant growth at infinity.

\begin{theorem} \label{thm:properties_sltns_given_u0}
Suppose that $u_{0}\in \mathcal{M}_\eps(\R^\N)$, set
  $T(\eps)=1/4\eps$, and let $u(x,t)$ be given by
  \eqref{eq:solution_heat_up_t=0}. Then
\begin{enumerate}  
\item[\rm(i)]   $u(t) \in L^{\infty}_{\loc}(\R^\N)$ for $t\in (0,
T(\eps))$. Also $u\in C^{\infty}
(\R^\N \times (0,T(\eps)))$ and satisfies
\begin{displaymath}
  u_{t} - \Delta u =0 \qquad\mbox{for all}\quad x\in \R^\N, \ 0< t <T(\eps).
\end{displaymath}

\item[\rm(ii)] For every
$\varphi \in C_{c}(\R^\N)$ and $0\leq t< T(\eps)$
\begin{displaymath}
  \int_{\R^\N} \varphi \,   u(t)=   \int_{\R^\N}  S(t)\varphi \, \d u_{0} .
\end{displaymath}
In particular, $u(t) \to u_{0} $ as $t\to 0^{+}$ as a measure, i.e.
\begin{displaymath}
  \int_{\R^\N} \varphi \,  u(t)   \to  \int_{\R^\N} \varphi \, \d
  u_{0}\qquad\mbox{for every}\quad \varphi \in C_{c}(\R^\N).
\end{displaymath}

\item[\rm(iii)] If $0\leq u_{0}\in \mathcal{M}_\eps(\R^\N)$ is non-zero
  then $u(x,t)>0$ for all $x\in\R^\N$, $t\in (0,T(\eps))$, i.e.\ the
  Strong Maximum Principle holds.

\end{enumerate}
\end{theorem}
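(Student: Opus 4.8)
The plan is to prove the three parts in turn, using the integral representation \eqref{eq:solution_heat_up_t=0} together with the preparatory lemmas already at hand. The starting observation, needed throughout, is that $u(x,t)$ is finite for $t\in(0,T(\eps))$: given such a $t$, choose $a>1$ with $at<T(\eps)$; since $1/4at>\eps$ and the classes $\M_\eps$ increase in $\eps$, we have $u_{0}\in\M_{\eps}\subset\M_{1/4at}$, so $u(0,at,|u_{0}|)<\infty$, and \eqref{bound_above_x=0} bounds $|u(x,t)|$ pointwise. For part~(i), local boundedness is then immediate: fixing $t$ and $a$ as above, \eqref{bound_above_x=0} gives $|u(x,t)|\le c_{\N,a}\,u(0,at,|u_{0}|)\,\e^{|x|^{2}/4(a-1)t}$, whose exponential factor is bounded on compact sets in $x$, so $u(t)\in L^{\infty}_{\loc}(\R^{\N})$. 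For smoothness and the equation I differentiate \eqref{eq:solution_heat_up_t=0} under the integral sign. Fix a box $Q=\{|x-x_{0}|\le\rho\}\times[t_{1},t_{2}]$ with $0<t_{1}<t_{2}<T(\eps)$. Each derivative $\partial_{x}^{\alpha}\partial_{t}^{k}K(x-y,t)$ is a polynomial in the components of $(x-y)$ and in $1/t$ times $K(x-y,t)$. Since $t_{2}<T(\eps)=1/4\eps$ we have $1/4t_{2}>\eps$; using $|x-y|^{2}\ge(1-\theta)|y|^{2}-(\tfrac{1}{\theta}-1)|x|^{2}$ with $\theta$ small enough that $(1-\theta)/4t_{2}>\eps$, and noting that $|x|\le|x_{0}|+\rho$ is bounded on $Q$, the Gaussian $\e^{-|x-y|^{2}/4t}$ is dominated on $Q$ by $C\,\e^{-\eps'|y|^{2}}$ with $\eps'>\eps$; a slightly weaker Gaussian absorbs the polynomial factors, leaving a $Q$-uniform bound $C_{Q}\,\e^{-\eps|y|^{2}}$ that is integrable against $\d|u_{0}|$ because $u_{0}\in\M_{\eps}$. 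This justifies differentiation under the integral to all orders, so $u\in C^{\infty}(\R^{\N}\times(0,T(\eps)))$; and as $(\partial_{t}-\Delta_{x})K(x-y,t)=0$ for $t>0$, the same holds for $u$.

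For part~(ii), the identity $\int\varphi\,u(t)=\int S(t)\varphi\,\d u_{0}$ is exactly \eqref{Fubbini_4-S(t)} of Lemma~\ref{lem:preparing_4_Fubini} applied with $\mu=u_{0}$ and $\phi=\varphi$: any $\varphi\in C_{c}(\R^{\N})$ is bounded with compact support, hence satisfies $|\varphi(x)|\le A\e^{-\gamma|x|^{2}}$ for every $\gamma>0$, so given $t<T(\eps)$ I choose $\gamma$ large enough that $T(\gamma)<T(\eps)-t$, i.e.\ $t<T(\eps)-T(\gamma)$, and the lemma applies. For the convergence as $t\to0^{+}$, write $\int\varphi\,u(t)-\int\varphi\,\d u_{0}=\int(S(t)\varphi-\varphi)\,\d u_{0}$. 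By Lemma~\ref{heat_solution_Cc}(iii), $\eta(t):=\sup_{x}\e^{\eps|x|^{2}}|S(t)\varphi(x)-\varphi(x)|\to0$ as $t\to0$, so $|S(t)\varphi(x)-\varphi(x)|\le\eta(t)\e^{-\eps|x|^{2}}$ and
\begin{displaymath}
\Big|\int_{\R^{\N}}(S(t)\varphi-\varphi)\,\d u_{0}\Big|\le\eta(t)\int_{\R^{\N}}\e^{-\eps|x|^{2}}\,\d|u_{0}|\longrightarrow0,
\end{displaymath}
the last integral being finite since $u_{0}\in\M_{\eps}$. This gives both the identity and the weak convergence $u(t)\to u_{0}$.

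Part~(iii) follows directly from the strict positivity of the heat kernel, with no need for the PDE maximum principle. If $0\le u_{0}\ne0$ then, by inner regularity, some ball $B(0,R)$ satisfies $u_{0}(B(0,R))>0$; for any fixed $(x,t)$ and $y\in B(0,R)$ we have $\e^{-|x-y|^{2}/4t}\ge\e^{-(|x|+R)^{2}/4t}>0$, whence
\begin{displaymath}
u(x,t)\ge\frac{\e^{-(|x|+R)^{2}/4t}}{(4\pi t)^{\N/2}}\,u_{0}(B(0,R))>0 .
\end{displaymath}

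I expect the main obstacle to be the domination estimate for differentiation under the integral in part~(i): one must verify that, \emph{uniformly} over the compact box $Q$, every kernel derivative is bounded by a single function integrable against $\d|u_{0}|$. This hinges precisely on keeping $t_{2}$ strictly below $T(\eps)$, so that the kernel's Gaussian decay rate $1/4t_{2}$ strictly exceeds the weight exponent $\eps$ built into $\M_{\eps}(\R^{\N})$, leaving enough room to absorb the polynomial factors produced by differentiation. Parts~(ii) and~(iii) are comparatively routine once Lemmas~\ref{heat_solution_Cc} and~\ref{lem:preparing_4_Fubini} are in place.
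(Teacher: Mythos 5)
Correct, and essentially the same as the paper's proof: part (i) uses the upper bound of Lemma \ref{lem:estimate_from_x=0} for local boundedness and differentiation under the integral justified by dominating all kernel derivatives by Gaussians of decay rate strictly exceeding $\eps$ on compact space--time boxes, part (ii) uses \eqref{Fubbini_4-S(t)} from Lemma \ref{lem:preparing_4_Fubini} together with Lemma \ref{heat_solution_Cc}(iii), and part (iii) rests on strict positivity of the kernel (you argue this directly on a ball of positive $u_0$-measure where the paper cites the lower bound \eqref{bound_below_above_x=0}, but it is the same observation). A minor point in your favour: by exploiting that $\varphi\in C_{c}(\R^{\N})$ satisfies $|\varphi|\le A\e^{-\gamma|x|^{2}}$ for arbitrarily large $\gamma$, you obtain the identity in (ii) explicitly for all $0\le t<T(\eps)$, whereas the paper's write-up only invokes the Fubini lemma ``for $t$ small''.
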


\begin{proof} (i) If $u_{0}\in \mathcal{M}_{\eps} (\R^\N)$, then  for any $a>1$
\begin{displaymath}
   u(0,at,|u_{0}|) =  \frac{1}{(4\pi at)^{\N/2}} \int_{\R^\N} \e^{-|y|^2/4at}
\, \d  |u_0(y)|  <\infty
\end{displaymath}
provided that $\frac{1}{4at}\geq \eps$, that is $t\leq \frac{1}{4a\eps} <
T(\eps)$. Hence by (\ref{bound_above_x=0}) from Lemma
\ref{lem:estimate_from_x=0}, we have $u(t) \in L^{\infty}_{\loc}(\R^\N)$ for
$t\in (0,  T(\eps))$.

The  rest of part (i) follows from the regularity of the heat kernel, since
for any multi-index  $\alpha = (\alpha_{1}, \ldots, \alpha_{\N}) \in
\NN^{\N}$ and  $n\in \NN$, the derivatives satisfy
\begin{displaymath}
  D^{\alpha,n}_{x,t} K(x,t) =  {p_{\alpha,n}(x,t) \over  t^{{\N/2}+ |\alpha| + 2 n}}\, \e^{-|x|^{2}/4t},
\end{displaymath}
where  $p_{\alpha,n}(x,t)$ is a polynomial of degree not exceeding
$|\alpha| + 2n$. For $t$ bounded away from zero and  $\delta >0$ this
can be bounded by a constant times $ \e^{(-\frac{1}{4t}+ \delta)
  |x|^{2} }$. Therefore for $0<s\leq t\leq \tau < T(\eps)$
\begin{displaymath}
  \int_{\R^\N}  | D^{\alpha,n}_{x,t} K(x-y,t) | \, \d |u_0(y)| \leq C_{s,\tau}
  \int_{\R^\N} \e^{(-\frac{1}{4 \tau}+ \delta)
  |x-y|^{2} }  \, \d  |u_0(y)|
\end{displaymath}
with $0<\delta < \frac{1}{4 \tau}$.
Proceeding as in the upper bound in Lemma \ref{lem:estimate_from_x=0}
the above integral is bounded, for $x$ in compact sets and $0<\alpha <1$, by a multiple
of
\begin{displaymath}
  \int_{\R^\N}  \e^{(-\frac{1}{4 \tau}+ \delta) (1-\alpha)
  |y|^{2} }   \, \d  |u_{0}(y)|    =  \int_{\R^\N}  \e^{(\eps + (-\frac{1}{4\tau}+ \delta) (1-\alpha) )
  |y|^{2} }   \e^{-\eps|y|^{2}}   \, \d |u_{0}(y)|
\end{displaymath}
which is finite as long as we chose $\delta, \alpha$ small such that
$\eps < (1-\alpha) (\frac{1}{4\tau} -\delta)$. For this it suffices that
$\frac{1}{4 T(\eps)(1-\alpha)} = \frac{\eps}{1-\alpha} <
\frac{1}{4\tau} -\delta$ which is possible since $\tau < T(\eps)$.
Hence $u\in C^{\infty} (\R^\N \times (0,T(\eps)))$ and satisfies the
heat equation pointwise. 

For (ii), i.e.\ to show that the initial data is attained in the sense
of measures, notice first that it is enough to consider non-negative
test functions in  $C_{c}(\R^\N)$. Now, from Lemma
\ref{heat_solution_Cc} and (\ref{Fubbini_4-S(t)}) in Lemma
\ref{lem:preparing_4_Fubini},  we get for
$t$ small 
$$
\int_{\R^\N} \varphi   u(t) =   \int_{\R^\N}  S(t)\varphi\, \d u_{0} .
$$

Since Lemma \ref{heat_solution_Cc} also guarantees that $\e^{\eps|x|^{2}} \big (S(t)
\varphi(x)-\varphi(x) \big) \to 0$ uniformly in $\R^\N$ as $t\to 0$,
we can take $t\to0$ in (\ref{eq:u0_tested_with_heat}) and obtain
\begin{displaymath} 
  \int_{\R^\N} \varphi   \, u(t) = \int_{\R^\N} S(t) \varphi \, \d u_{0} =
  \int_{\R^\N} \varphi \, \d  u_{0}  +  \int_{\R^\N} \e^{\eps |x|^{2}}\big(S(t) \varphi
  -\varphi) \, \e^{-\eps |x|^{2}}  \, \d u_{0} \to \int_{\R^\N}
  \varphi \, \d u_{0}
\end{displaymath}
and (ii) is proved.

Part (iii) is a consequence of the lower bound in
(\ref{bound_below_above_x=0}) from Lemma
\ref{lem:estimate_from_x=0}.
\end{proof}

Now we derive some estimates on the solution in the
$L^1_\eps(\R^{\N})$ spaces introduced in (\ref{eq:space_L1eps}), using
the norm from \eqref{eq:norm_L1eps}. We also discuss the continuity of
the solutions in time. Note that part (i) shows that in fact whenever
$u_0\in\M_\eps(\R^{\N})$ we have $u(t)\in L^1_{\eps(t)}(\R^{\N})$; in part (iii) we
obtain a similar result for the derivatives of $u$, but with some loss
in the allowed growth (in $L^1_\delta (\R^{\N})$ only for $\delta>\eps(t)$).

Recalling the notations in (\ref{epsilon-t}), we have the following
result.

\begin{proposition} \label{prop:semigroup_estimates}
Suppose that  $u_{0}\in \mathcal{M}_\eps(\R^\N)$ and
let $u(x,t)$ be given by \eqref{eq:solution_heat_up_t=0}.
\begin{itemize}
\item[\rm(i)] For $0 < t <T(\eps)$
we have $u(t) \in
L^{1}_{\delta}(\R^{\N})$ for any  $\delta \geq \eps(t)$. Moreover
\begin{equation}\label{eq:estimate_solution_L1eps_L1delta}
  \|u(t)\|_{ L^{1}_{\eps(t)}(\R^{\N})} \leq  \|u_{0}\|_{
    \mathcal{M}_{\eps}(\R^{\N})}  .
\end{equation}

\item[\rm(ii)] For $0\leq s<t < T(\eps)$
\begin{equation} \label{eq:semigroup_solution}
  u(t) = S(t-s) u(s).
\end{equation}

\item[\rm(iii)] For any multi-index  $\alpha \in \NN^{\N}$, for $0 < t <T(\eps)$
we have $D^{\alpha}_{x}u(t) \in
L^{1}_{\delta}(\R^{\N})$ for any  $\delta > \eps(t)$. Moreover  for any  $\gamma>1$
we have
\begin{equation} \label{eq:estimate_derivative_solution_L1eps_L1delta}
  \|D^{\alpha}_{x}u(t)\|_{ L^{1}_{\delta(t)}(\R^{\N})} \leq
  \frac{c_{\alpha,\gamma}}{t^{\frac{|\alpha|}{2}}}  \|u_{0}\|_{ \mathcal{M}_{\eps}(\R^{\N})}\qquad\mbox{for all}\quad 0 < t <\frac{T(\eps)}{\gamma},
\end{equation}
where $\delta(t):=
\frac{1}{4(T(\eps)-\gamma t)} = \frac{\eps}{(1-4\eps \gamma t)}$.

\item[\rm(iv)]  For any multi-index  $\alpha \in \NN^{\N}$, $m\in \NN$
and for  each $t_{0} \in  (0,T(\eps))$ there exists
$\delta (t_{0}) > \eps$ such that the mapping $(0,T(\eps)) \ni t
\mapsto D^{\alpha,m}_{x,t}u(t)$ is continuous in $L^{1}_{\delta (t_{0})}(\R^{\N})$ at
$t=t_{0}$.

\end{itemize}

\end{proposition}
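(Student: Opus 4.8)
The plan is to reduce everything to a single Gaussian computation — a weighted Young-type inequality for convolution against a heat kernel — and then read off parts (ii)--(iv) as variations of it.

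\textbf{Part (i).} I would compute the weighted norm directly. Starting from $|u(x,t)|\le(4\pi t)^{-\N/2}\int_{\R^\N}\e^{-|x-y|^2/4t}\,\d|u_0|(y)$ and Tonelli, the matter reduces to the inner Gaussian integral $\int_{\R^\N}\e^{-\eps(t)|x|^2-|x-y|^2/4t}\,\d x$. Completing the square in $x$, the coefficient of $|x|^2$ is $b=\eps(t)+1/4t$, and the two identities $1+4t\,\eps(t)=(1-4\eps t)^{-1}$ and $\eps(t)(1-4\eps t)=\eps$ coming from \eqref{epsilon-t} make the $|y|^2$-coefficient collapse to exactly $-\eps$, so the inner integral equals $(4\pi t(1-4\eps t))^{\N/2}\e^{-\eps|y|^2}$. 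Feeding this back and using $\eps(t)(1-4\eps t)=\eps$ once more, all prefactors combine to give $\|u(t)\|_{L^1_{\eps(t)}(\R^\N)}\le\|u_0\|_{\M_\eps(\R^\N)}$ with the sharp constant; membership in $L^1_\delta(\R^\N)$ for every $\delta\ge\eps(t)$ then follows from the inclusion \eqref{eq:increase_Meps_norm}. This is bookkeeping, but it is the computation on which the rest rests.

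\textbf{Part (ii).} For $s=0$ this is the definition of $u$. For $s>0$, part (i) gives $u(s)\in L^1_{\eps(s)}(\R^\N)\subset\M_{\eps(s)}(\R^\N)$, and since $s+T(\eps(s))=T(\eps)$ the object $S(t-s)u(s)$ is defined exactly for $t<T(\eps)$. I would substitute the integral formula for $u(s)$ into $S(t-s)u(s)$ and interchange the order of integration. Fubini is justified because $u_0\in\M_\eps(\R^\N)\subset\M_{1/4t}(\R^\N)$ for $t<T(\eps)$, which renders the double integral against $\d|u_0|$ finite; the inner $z$-integral is then the Chapman--Kolmogorov identity $\int_{\R^\N}K(x-z,t-s)K(z-y,s)\,\d z=K(x-y,t)$, and this yields $S(t-s)u(s)=u(t)$.

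\textbf{Part (iii).} Here I would not split in time but convolve the measure directly: by the smoothness in Theorem \ref{thm:properties_sltns_given_u0}(i) one has $D^\alpha_x u(t)=(D^\alpha_x K(\cdot,t))*u_0$. The key input is the scaling of the derivative kernel, $D^\alpha_x K(x,\tau)=\tau^{-|\alpha|/2}P_\alpha(x/\sqrt\tau)K(x,\tau)$ with $P_\alpha$ a polynomial of degree $|\alpha|$; for any $\tau'>\tau$ the polynomial is absorbed into a slightly weaker Gaussian, giving a pointwise bound $|D^\alpha_x K(x,\tau)|\le c_{\alpha,\tau'/\tau}\,\tau^{-|\alpha|/2}K(x,\tau')$. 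Taking $\tau=t$ and $\tau'=\gamma t$ and feeding $K(\cdot,\gamma t)*|u_0|$ into the estimate of part (i) with $t$ replaced by $\gamma t$ (legitimate since $u_0\in\M_\eps(\R^\N)$ and $\gamma t<T(\eps)$) lands the result in $L^1_{\delta(t)}(\R^\N)$, with $\delta(t)=\eps/(1-4\eps\gamma t)$, carrying the factor $t^{-|\alpha|/2}$ and a constant $c_{\alpha,\gamma}$ depending on $\alpha$ and on $\gamma=\tau'/\tau$. The membership assertion for arbitrary $\delta>\eps(t)$ on all of $(0,T(\eps))$ then follows by letting $\gamma\downarrow1$.

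\textbf{Part (iv) and the main obstacle.} Using $\partial_t u=\Delta u$ I would replace $D^{\alpha,m}_{x,t}u$ by a finite linear combination of spatial derivatives $D^\beta_x u$ of order $|\alpha|+2m$, so again $D^\beta_x u(t)=(D^\beta_x K(\cdot,t))*u_0$. Fixing $t_0$, I would work on a window $[t_0-\rho,t_0+\rho]\subset(0,T(\eps))$ and set $\delta(t_0)=\eps/(1-4\eps\tau_2)$ for some $\tau_2$ slightly larger than $t_0+\rho$ but still below $T(\eps)$, so that $\delta(t_0)>\eps$. Continuity at $t_0$ reduces to showing $\int_{\R^\N}\int_{\R^\N}\e^{-\delta(t_0)|x|^2}|D^\beta_x K(x-y,t)-D^\beta_x K(x-y,t_0)|\,\d x\,\d|u_0|(y)\to0$, which I would obtain by dominated convergence: the integrand tends to $0$ pointwise by smoothness of the kernel in $\tau>0$, while on the window it is dominated uniformly by $c\,\e^{-\delta(t_0)|x|^2}\e^{-|x-y|^2/4\tau_2}$, whose double integral against $\d|u_0|$ is finite precisely because $\delta(t_0)$ is the tight exponent for time $\tau_2$ in the part (i) computation. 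The genuine technical obstacle is part (iii): the conceptual heart is the weighted Young inequality of part (i), but one must there combine it with the correct domination of the \emph{derivative} heat kernel and track the parameters so that the $t^{-|\alpha|/2}$ scaling, the exponent $\delta(t)$, and the $\gamma$-dependence of the constant all emerge as stated. Parts (ii) and (iv) are then Fubini and dominated-convergence packagings of the same Gaussian estimate.
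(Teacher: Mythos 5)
Your proposal is correct and follows essentially the same route as the paper: part (i) is the same Gaussian square-completion using the identity $\eps(t)(1-4\eps t)=\eps$, part (ii) the same substitution-plus-Fubini argument (the paper carries out the Chapman--Kolmogorov identity by completing the square explicitly), and part (iii) the same absorption of the polynomial factor in $D^{\alpha}_{x}K$ into a Gaussian at the later time $\gamma t$ followed by an application of part (i). The only cosmetic difference is in (iv), where you apply dominated convergence directly at the kernel level on the product measure $\e^{-\delta(t_0)|x|^2}\,\d x \times \d|u_0|(y)$, while the paper splits $\R^{\N}$ into $|x|\le n$ and $|x|\ge n$ and combines local uniform convergence of the derivatives with a uniform quadratic exponential bound --- the same estimate packaged differently.
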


\begin{proof}
\noindent (i) Setting  $\delta = \frac{1}{4\tau}$
\begin{displaymath} 
\int_{\R^\N} \e^{-|x|^2/4\tau} |u(x,t)| \, \d x \leq   \frac{1}{(4\pi
  t)^{\N/2}} \int_{\R^\N} \int_{\R^\N} \e^{-|x|^2/4\tau}
\e^{-|x-z|^2/4t} \, \d |u_0(z)|  \, \d x.
\end{displaymath}
Notice that completing the square we obtain
\begin{equation} \label{eq:complete_squares}
  \frac{|x|^{2}}{\tau} + \frac{|x-z|^{2}}{t} =
  \frac{t+\tau}{t\tau} \left|x-  \frac{\tau}{t+\tau} z\right|^{2} +  \frac{|z|^{2}}{t+\tau}
\end{equation}
and so
\begin{displaymath}
  \int_{\R^\N} \e^{-|x|^2/4\tau} |u(x,t)| \, \d x \leq    \frac{1}{(4\pi
  t)^{\N/2}} \int_{\R^\N} \e^{-\frac{|z|^{2}}{4(t+\tau)} } \,
\d  |u_0(z)| \int_{\R^\N}  \e^{-\frac{t+\tau}{4t\tau} |x-  \frac{\tau}{t+\tau} z|^{2}}\, \d x.
\end{displaymath}
Since
\begin{displaymath}
  \int_{\R^\N}  \e^{-\frac{t+\tau}{4t\tau} |x-  \frac{\tau}{t+\tau}
    z|^{2}}\, \d x = \int_{\R^\N}  \e^{-\frac{t+\tau}{4t\tau}
    |x|^{2}}\, \d x = \left(\frac{4 \pi t\tau}{t+\tau}\right)^{\N/2}
\end{displaymath}
it follows that
\begin{displaymath}
   \int_{\R^\N} \e^{-|x|^2/4\tau} |u(x,t)| \, \d x \leq
   \left(\frac{\tau}{t+\tau}\right)^{\N/2}  \int_{\R^\N}
   \e^{-\frac{|z|^{2}}{4(t+\tau)} } \,
\d  |u_0(z)|  .
\end{displaymath}

Now given $t$ with $0<t<T(\eps)$, choose $\tau=T(\eps)-t=(1-4\eps t)/4\eps$; then $1/4\tau=\eps(t)$, $1/4(t+\tau)=\eps$, and this estimate becomes
$$
\eps(t)^{d/2}\int_{\R^\N}\e^{-\eps(t)|x|^2}|u(x,t)|,\d x\le\eps^{d/2}\int_{\R^\N}\e^{-\eps|z|^2}\,\d|u_0(z)|,
$$
which is precisely \eqref{eq:estimate_solution_L1eps_L1delta} up to a constant multiple of both sides.

\noindent (ii) Now
\begin{displaymath}
  S(t-s)u(s) (x) = \frac{1}{(4\pi (t-s))^{\N/2}} \int_{\R^\N} \e^{-|x-y|^2/4(t-s)}
u(y,s) \, \d y
\end{displaymath}
and
\begin{displaymath}
  u(y,s) = \frac{1}{(4\pi s)^{\N/2}} \int_{\R^\N} \e^{-|y-z|^2/4s}
\, \d u_0(z).
\end{displaymath}
Notice that completing the square as in (\ref{eq:complete_squares})
with $x-y$ replacing $x$, $z-y$ replacing $z$ and $t-s$ replacing
$\tau$ and $s$ replacing $t$, we get 
\begin{displaymath}
   S(t-s)u(s) (x) = \frac{1}{(4\pi (t-s))^{\N/2}}  \frac{1}{(4\pi
     s)^{\N/2}}  \int_{\R^\N}  \e^{-|x-z|^2/4t} \, \d u_{0}(z)
   \int_{\R^\N}  \e^{ -\frac{t}{4s(t-s)} |(y-z)-
  \frac{s}{t} (x-z)|^{2}}\, \d y
\end{displaymath}
and
\begin{displaymath}
   \int_{\R^\N}  \e^{ -\frac{t}{4s(t-s)} |(y-z)-
  \frac{s}{t} (x-z)|^{2}}\, \d y  =  \int_{\R^\N}  \e^{
  -\frac{t}{4s(t-s)} |y|^{2}}\, \d y  = \left(\frac{4\pi s(t-s)}{t}\right)^{\N/2}
\end{displaymath}
and the result is proved.

\noindent (iii)
Notice that for any multi-index  $\alpha \in \NN^{\N}$
\begin{displaymath}
D^{\alpha}_{x} u(x,t)=     \int_{\R^\N}  D^{\alpha}_{x}
K(x-y,t) \, \d u_0(y)  = \frac{1}{t^{\N/2 +  |\alpha|/2}}
\int_{\R^\N}  p_{\alpha}(x-y,t) \, \e^{-|x-y|^{2}/4t} \, \d u_0(y)
\end{displaymath}
with $p_{\alpha}(x-y,t)$ is a polynomial of degree $|\alpha|$ in
powers of  $\frac{x-y}{t^{1/2}}$.
Hence for any $0<\beta<1$
\begin{align} \label{eq:point_bound_further_derivatives}
|D^{\alpha}_{x} u(x,t)| &\leq   {c_{\alpha,\beta} \over  t^{{\N/2}+
    |\alpha|/2}}\, \int_{\R^\N}  \e^{- \beta\frac{ |x-y|^{2}}{4t}}
\, \d |u_0(y)|\\
&=\frac{\tilde c_{\alpha,\beta}}{t^{|\alpha|/2}}\,v(x,\gamma t),\nonumber
\end{align}
where $v(x,t)$ is the solution with initial data $|u_0|$ and
$\gamma=1/\beta>1$ is arbitrary. The estimate in
\eqref{eq:estimate_derivative_solution_L1eps_L1delta} follows using
part (i).

\noindent (iv)
Note that we can argue as we did for (\ref{bound_above_x=0}), and use
  (\ref{eq:point_bound_further_derivatives})  to obtain, for $0<\gamma <1$,
\begin{equation} \label{eq:exponential_bound_further_derivatives}
|D^{\alpha}_{x} u(x,t)| \leq   {c_{\alpha,\beta}  \over  t^{{\N/2}+
    |\alpha|/2}}  \e^{(\frac{1}{\gamma}-1) (1-\beta)\frac{|x|^{2}}{4 t}
  } \, \int_{\R^\N}  \e^{-(1-\gamma )(1-\beta) \frac{|y|^{2}}{4 t} }    \, \d |u_{0}(y)| ,
\end{equation}
which is  finite provided we choose $\beta, \gamma$
such that
$(1-\gamma)  (1-\beta) \frac{1}{4 t} > \eps$
i.e.\  provided that $t<T=(1-\gamma)(1-\beta)T(\eps)$.

 From the regularity of $u$ in Theorem
\ref{thm:properties_sltns_given_u0} we know that, as $t\to t_{0}$,
\begin{displaymath}
    D^{\alpha}_{x}  u(t) \to  D^{\alpha}_{x} u(t_{0})  \quad \mbox{in $L^{\infty}_{\loc}(\R^{\N})$}  .
\end{displaymath}
Now, if $\alpha =0$,    (\ref{bound_above_x=0})
implies that for
$\eps(t_{0}) =\frac{\gamma}{4(a-1)t_{0}}$ and $a, \gamma >1$ we
have a uniform quadratic exponential  bound for $u(t)$ for all $t$
close enough to $t_{0}$. For nonzero $\alpha$,
(\ref{eq:exponential_bound_further_derivatives}) implies that
for $0<\beta, \gamma<1$ and $\delta(t_{0}) = (1-\gamma)  (1-\beta)
\frac{1}{4 t_{0}} > \eps$ we
have again a uniform quadratic exponential bound for $D^{\alpha}_{x}
u(t)$ for all $t$ close enough to $t_{0}$.
Now, for $n\in \NN$,
\begin{displaymath}
  \|D^{\alpha}_{x}  u(t) - D^{\alpha}_{x}  u(t_{0})\|_{L^1_{\delta(t_{0})}(\R^\N)}= c  \int_{|x|\leq
  n}\e^{-\delta(t_{0}) |x|^2} |D^{\alpha}_{x} u(t) - D^{\alpha}_{x}
u(t_{0})|(x) \, \d x
\end{displaymath}
\begin{displaymath}
+ c  \int_{|x|\ge
  n}\e^{-\delta(t_{0}) |x|^2} |D^{\alpha}_{x} u(t) - D^{\alpha}_{x} u(t_{0})|(x) \, \d x .
\end{displaymath}
From the uniform quadratic exponential bound,
the second term is
arbitrarily small for sufficiently large $n$, uniformly in $t$ close
to $t_{0}$,  while
the first term is small, with fixed $n$ and  $t$ close enough to
$t_{0}$.

For time derivatives just note that for $m\in \NN$, $\partial_{t}^{m}
u(t) = (-\Delta)^{2m} u(t),$ and then
\begin{displaymath}
D^{\alpha,m}_{x,t}  u(t) = \partial_{t}^{m} D^{\alpha}_{x}
  u(t) = (-\Delta)^{2m}   D^{\alpha}_{x}  u(t)
\end{displaymath}
and we apply the argument above.\end{proof}

We now discuss further the sense in which the initial data is attained
(improving on part (ii) of Theorem
\ref{thm:properties_sltns_given_u0}).
First we show that $u(t)= S(t) u_{0}$ with $u_{0} \in
 \mathcal{M}_{\eps}(\R^{\N})$ attains the initial data against any
 test function that decays fast enough.

 \begin{corollary} \label{cor:attain_initial_measure}
If    $u_{0} \in  \mathcal{M}_{\eps}(\R^{\N})$ and $\varphi \in C_{0}(\R^{\N})$ is such that $|\varphi (x)| \leq A
\e^{-\gamma|x|^{2}}$, $x\in \R^{\N}$, with $\gamma > \eps$, then $u(t)=
S(t) u_{0}$  satisfies
\begin{displaymath}
  \int_{\R^{\N}} u(t) \varphi \to \int_{\R^{\N}} \varphi \, \d u_{0}\qquad\mbox{as}\quad t\to0.
\end{displaymath}
 \end{corollary}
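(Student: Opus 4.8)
The plan is to reduce the statement to a single application of dominated convergence, after using the Fubini-type identity of Lemma~\ref{lem:preparing_4_Fubini} to transfer the heat semigroup off the (merely locally finite) measure $u_{0}$ and onto the rapidly decaying test function $\varphi$. Concretely, I would first apply \eqref{Fubbini_4-S(t)} with $\mu=u_{0}$ and $\phi=\varphi$: since $|\varphi(x)|\le A\e^{-\gamma|x|^{2}}$ with $\gamma>\eps$, that identity is valid for every $0<t<T(\eps)-T(\gamma)$, and it gives
\begin{displaymath}
  \int_{\R^{\N}} u(t)\varphi = \int_{\R^{\N}} \varphi\, S(t)u_{0} = \int_{\R^{\N}} S(t)\varphi\,\d u_{0}.
\end{displaymath}
It therefore suffices to show that $\int_{\R^{\N}}(S(t)\varphi-\varphi)\,\d u_{0}\to 0$ as $t\to 0$, and this I would bound by $\int_{\R^{\N}}|S(t)\varphi-\varphi|\,\d|u_{0}|$ and attack by dominated convergence against the total-variation measure $|u_{0}|$.

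Two ingredients are needed for this. For the pointwise limit I would use that $\varphi\in C_{0}(\R^{\N})\subset\mathrm{BUC}(\R^{\N})$, so by the same fact cited in the proof of Lemma~\ref{heat_solution_Cc} we have $S(t)\varphi\to\varphi$ uniformly, hence pointwise, as $t\to0$; thus $|S(t)\varphi(x)-\varphi(x)|\to 0$ for every $x$. For the dominating function I would combine the hypothesis $|\varphi(x)|\le A\e^{-\gamma|x|^{2}}$ with the decay estimate of Lemma~\ref{lem:estimate_fast_decay_initialdata}, which yields $|S(t)\varphi(x)|\le A\e^{-\gamma_{t}|x|^{2}}$ with the degraded rate $\gamma_{t}:=\gamma/(1+4\gamma t)<\gamma$. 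Since $\gamma_{t}$ is decreasing in $t$, for any fixed $t_{0}$ and all $0<t\le t_{0}$ both $|S(t)\varphi|$ and $|\varphi|$ are bounded by $2A\,\e^{-\gamma_{t_{0}}|x|^{2}}$.

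The crux of the argument is to ensure that this dominating function is integrable against $|u_{0}|$, i.e.\ that the rate $\gamma_{t_{0}}$ does not degrade below $\eps$. A short computation shows $\gamma_{t_{0}}>\eps$ precisely when $t_{0}<(\gamma-\eps)/(4\eps\gamma)=T(\eps)-T(\gamma)$, which is exactly the range on which the Fubini identity holds (at the endpoint one has $\gamma_{t_{0}}=\eps$). Choosing any $t_{0}<T(\eps)-T(\gamma)$ we then have $\gamma_{t_{0}}>\eps$, so $\e^{-\gamma_{t_{0}}|x|^{2}}\le\e^{-\eps|x|^{2}}$ and
\begin{displaymath}
  \int_{\R^{\N}} 2A\,\e^{-\gamma_{t_{0}}|x|^{2}}\,\d|u_{0}| \le 2A\int_{\R^{\N}} \e^{-\eps|x|^{2}}\,\d|u_{0}| < \infty,
\end{displaymath}
the last integral being finite because $u_{0}\in\mathcal{M}_{\eps}(\R^{\N})$. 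Dominated convergence then gives $\int_{\R^{\N}}|S(t)\varphi-\varphi|\,\d|u_{0}|\to 0$, and the corollary follows.

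I expect the only delicate point to be exactly this bookkeeping of the decay rate: one must verify that the time-degradation of the Gaussian tail of $S(t)\varphi$ remains strictly faster than $\eps|x|^{2}$ on a sufficiently short time interval. It is reassuring, and not a coincidence, that the very threshold $t_{0}<T(\eps)-T(\gamma)$ which legitimises the Fubini step is also the one that makes the dominating function $|u_{0}|$-integrable; everything hinges on the inequality $\gamma>\eps$ in the hypothesis.
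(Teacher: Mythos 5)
Your proposal is correct and follows essentially the same route as the paper: the Fubini identity \eqref{Fubbini_4-S(t)} from Lemma \ref{lem:preparing_4_Fubini} to rewrite $\int u(t)\varphi$ as $\int S(t)\varphi\,\d u_{0}$, then Lemma \ref{lem:estimate_fast_decay_initialdata} to dominate $|S(t)\varphi|$ by a Gaussian with rate $\gamma/(1+4\gamma t)>\eps$ for small $t$, and finally dominated convergence against $\d|u_{0}|$ using the pointwise convergence $S(t)\varphi\to\varphi$. Your explicit verification that the time threshold $t_{0}<T(\eps)-T(\gamma)$ keeping the degraded rate above $\eps$ coincides with the range of validity of the Fubini step is a nice piece of bookkeeping that the paper leaves implicit, but the argument is the same.
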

 \begin{proof}
For $0\le    t<T(\eps)$ small and $\eps(t) = \frac{\eps}{1-4\eps t}$
we have $\gamma > \eps(t)$ and then
from~(\ref{Fubbini_4-S(t)}) in  Lemma \ref{lem:preparing_4_Fubini}
$\int_{\R^{\N}} u(t) \varphi = \int_{\R^{\N}} S(t) \varphi \, \d  u_{0}$.
Now, from  Lemma  \ref{lem:estimate_fast_decay_initialdata} it follows that for $t$ sufficiently small
$$|S(t) \varphi|(x) \leq  C   \e^{-\gamma(t)
  |x|^{2}},\quad\mbox{with}\quad \gamma(t)=\frac{\gamma}{1+4 \gamma t} > \eps,$$ 
  and
then   $|S(t) \varphi|(x) \leq  C   \e^{-\eps |x|^{2}} \in L^{1}(\d
|u_{0}|)$.  Also, $S(t) \varphi (x) \to \varphi(x)$ for $x\in \R^{\N}$
and then Lebesgue's theorem gives the result.
 \end{proof}

Assuming the initial data is a pointwise defined function, we get the
following result.

\begin{corollary}\label{cor:time_regularity}
Suppose that  $u_{0}\in L^{1}_\eps(\R^\N)$, set $T(\eps)=1/4\eps$, and
let $u(x,t)$ be given by \eqref{eq:solution_heat_up_t=0}. Then
\begin{itemize}
\item[\rm(i)] $u(t) \to u_{0}$ in $L^{1}_{\delta}(\R^{\N})$ as $t\to 0^+$ for any $\delta>\eps$;

\item[\rm(ii)] if $u_{0} \in L^{p}_{\loc}(\R^{\N})$ with $1\leq p <
\infty$ then
\begin{displaymath}
  u(t) \to u_{0} \quad \mbox{in}\quad L^{p}_{\loc}(\R^{\N})\qquad\mbox{as}\quad t\to0^+;\qquad\mbox{and}
\end{displaymath}
\item[\rm(iii)] if $u_{0} \in C(\R^{\N})$ then
  $u(t) \to u_{0}$ in $L^{\infty}_{\loc}(\R^{\N})$ as $t \to 0^+$.

\end{itemize}
\end{corollary}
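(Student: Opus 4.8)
The plan is to prove the three convergence statements of Corollary \ref{cor:time_regularity} by combining the already-established $L^1_{\eps(t)}$ estimate with a density/approximation argument using compactly supported continuous functions, for which Lemma \ref{heat_solution_Cc} provides the crucial uniform control of the weighted tail. Throughout I would fix $\delta>\eps$ and note that, by \eqref{epsilon-t}, we have $\eps(t)\downarrow\eps$ as $t\to0^+$, so that for $t$ small enough $\eps(t)<\delta$; hence by Proposition \ref{prop:semigroup_estimates}(i) and the monotonicity \eqref{eq:increase_Meps_norm} the solution $u(t)$ lives in $L^1_\delta(\R^\N)$ with a norm bounded uniformly in $t$ near $0$, controlled by $\|u_0\|_{\M_\eps}$.

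For part (i), the main step is a standard three-$\eps$ argument. First I would approximate $u_0$ in $L^1_\eps(\R^\N)$ by a function $\varphi\in C_c(\R^\N)$; this is legitimate since $C_c(\R^\N)$ is dense in the weighted space $L^1_\eps(\R^\N)$. Then I split
\begin{displaymath}
\|u(t)-u_0\|_{L^1_\delta}\le \|S(t)(u_0-\varphi)\|_{L^1_\delta}+\|S(t)\varphi-\varphi\|_{L^1_\delta}+\|\varphi-u_0\|_{L^1_\delta}.
\end{displaymath}
The first term is bounded by $\|u_0-\varphi\|_{L^1_\eps}$ using the contraction estimate \eqref{eq:estimate_solution_L1eps_L1delta} together with $\eps(t)<\delta$ and \eqref{eq:increase_Meps_norm}, uniformly for small $t$; the third is bounded by $\|u_0-\varphi\|_{L^1_\eps}$ since $\delta>\eps$ and the $L^1_\delta$-norm is dominated by the $L^1_\eps$-norm up to the constant in \eqref{eq:increase_Meps_norm}. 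For the middle term, Lemma \ref{heat_solution_Cc}(iii) gives both a uniform weighted-tail bound $\e^{\delta|x|^2}|S(t)\varphi-\varphi|\le C\e^{-\gamma|x|^2}$ (integrable) and uniform pointwise convergence of $\e^{\delta|x|^2}(S(t)\varphi-\varphi)\to0$, so dominated convergence forces the middle term to $0$ as $t\to0$. Choosing $\varphi$ to make the outer two terms small and then letting $t\to0$ yields the claim.

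Parts (ii) and (iii) are local and therefore easier, since on a fixed ball $B(0,n)$ the exponential weight $\e^{-\delta|x|^2}$ is bounded above and below. For (ii), the convergence $u(t)\to u_0$ in $L^p_{\loc}$ for $u_0\in L^p_{\loc}$ is a classical property of the heat semigroup acting on a locally-$L^p$ function; I would justify it by again approximating $u_0$ locally by $C_c$ functions, using $L^p$-contractivity of convolution with the heat kernel on the relevant region together with Lemma \ref{heat_solution_Cc} to control the contribution of the far field of $u_0$, which by part (i) converges in $L^1_\delta$ and hence, after multiplying by the locally bounded weight, does not disturb the local $L^p$ limit. Part (iii) follows from the well-known fact that $S(t)\psi\to\psi$ uniformly on $\R^\N$ for $\psi\in\mathrm{BUC}(\R^\N)$ (invoked already in the proof of Lemma \ref{heat_solution_Cc}), applied after localizing: on a fixed compact set one writes $u_0=\psi+(u_0-\psi)$ with $\psi\in C_c$ agreeing with $u_0$ on a slightly larger ball, handles the $\psi$ part by uniform semigroup continuity, and controls the remainder by the far-field estimate of Lemma \ref{heat_solution_Cc}(i).

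The main obstacle is keeping the bookkeeping of the exponents straight: the solution at time $t$ only has guaranteed decay at rate $\eps(t)>\eps$, which degrades in time, so one cannot simply work in $L^1_\eps$ and must pass to the slightly weaker space $L^1_\delta$ with $\delta>\eps$, exploiting that $\eps(t)\to\eps$ to accommodate small $t$. Once this is recognized and combined with the uniform-in-$t$ tail control from Lemma \ref{heat_solution_Cc}(iii), the argument reduces to routine density and dominated-convergence steps; the genuinely substantive analytic input has already been packaged into the preceding lemmas.
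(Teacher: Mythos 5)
Your proof of part (i) is correct and follows the paper's own argument almost verbatim: density of $C_c(\R^\N)$ in $L^1_\eps(\R^\N)$, the three-term splitting, estimate \eqref{eq:estimate_solution_L1eps_L1delta} combined with \eqref{eq:increase_Meps_norm} for $S(t)(u_0-\varphi)$, and Lemma \ref{heat_solution_Cc} for the middle term (where plain uniform convergence of $S(t)\varphi-\varphi$ already suffices, since $\e^{-\delta|x|^2}$ is integrable, so your weighted dominated-convergence variant is harmless overkill).

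Parts (ii) and (iii), however, contain a genuine gap, located exactly where the real work of the paper's proof lies. Your localization $u_0=\psi+(u_0-\psi)$ (equivalently $u_0=\varphi u_0+(1-\varphi)u_0$ with a cutoff $\varphi$, as in the paper) correctly reduces everything to showing that the evolution of the far part, $S(t)\bigl[(1-\varphi)u_0\bigr]$, tends to $0$ on the fixed ball in the required topology: $L^p$ for (ii), $L^\infty$ for (iii). Neither of the two tools you cite delivers this. First, Lemma \ref{heat_solution_Cc} goes in the wrong direction: it estimates $S(t)\phi$ \emph{far from the compact support} of $\phi\in C_c(\R^\N)$, whereas your remainder $(1-\varphi)u_0$ is supported near infinity and may grow like $\e^{\eps|x|^2}$ there, so the lemma simply does not apply to it; at best one could extract a bound by an $L^1$--$L^\infty$ duality argument through \eqref{Fubbini_4-S(t)}, but that is a separate argument you would have to make explicitly. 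Second, the appeal to part (i) is insufficient: $L^1_\delta$ convergence gives $S(t)\bigl[(1-\varphi)u_0\bigr]\to 0$ in $L^1$ of the ball (since the limit vanishes there), and multiplying by a bounded weight cannot upgrade $L^1$ convergence to $L^p$ convergence for $p>1$, still less to uniform convergence. What is missing is the quantitative ``no influence from far away as $t\to 0^+$'' estimate that the paper proves by hand (the computation culminating in \eqref{eq:uniformly_2_zero}): if the datum vanishes on $B(x_0,\delta)$ and lies in $L^1_\eps(\R^\N)$, then on a smaller ball $B(x_0,\tilde\delta)$ one has $\|S(t)[(1-\varphi)u_0]\|_{L^\infty(B(x_0,\tilde\delta))}\le C\,t^{-\N/2}\e^{-c/t}\int_{\R^\N}\e^{-\eps|y|^2}|u_0(y)|\,\d y\to 0$ as $t\to 0^+$. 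The point of that computation is to split $|x-y|^2$ so as to extract both a factor $\e^{-c\,\delta^2/t}$ coming from the positive distance between the two regions and a Gaussian in $y$ whose rate blows up like $1/t$ and therefore eventually dominates the allowed growth $\e^{\eps|y|^2}$ of the datum. With that uniform estimate in hand, (ii) and (iii) follow exactly along the lines you outline.
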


\begin{proof}
(i)  Note that for any $\varphi \in C_{c}(\R^{\N})$ we have
\begin{displaymath}
\|S(t) u_{0} - u_{0}  \|_{L^{1}_{\delta}(\R^{\N})} \leq \|S(t) u_{0} -
S(t) \varphi   \|_{L^{1}_{\delta}(\R^{\N})} + \|S(t) \varphi  -
\varphi \|_{L^{1}_{\delta}(\R^{\N})} + \|\varphi  - u_{0}
\|_{L^{1}_{\delta}(\R^{\N})}.
\end{displaymath}

Let $\gamma >0$ and take $\varphi \in C_{c}(\R^{\N})$ such that
\begin{displaymath}
  \|u_{0} -\varphi\|_{L^{1}_{\eps}(\R^{\N})} = \int_{\R^{\d}}
  \e^{-\eps|x|^{2}} |u_{0}(x) - \varphi(x)| \, \d x < \gamma .
\end{displaymath}
To see this note that for $R>0$, if ${\rm supp}\,(\varphi) \subset B(0,R)$
then
\begin{displaymath}
  \int_{\R^{\d}}  \e^{-\eps|x|^{2}} |u_{0}(x) - \varphi(x)| \, \d x  =
  \int_{|x|\leq R}
  \e^{-\eps|x|^{2}} |u_{0}(x) - \varphi(x)| \, \d x  + \int_{|x|>R}
  \e^{-\eps|x|^{2}} |u_{0}(x)| \, \d x .
\end{displaymath}
The second term is small for $R$ large and so is the first one if we
approach $u_{0}$ by $\varphi$ in $L^{1}(B(0,R))$.

Now for any $\delta
>\eps$  and  all sufficiently small $t>0$ we have $\tilde
\delta(t)\le\delta$, where $\tilde \delta(t):=\frac{\eps}{(1-4\eps
  t)}$. Then  from (\ref{eq:increase_Meps_norm}) and
(\ref{eq:estimate_solution_L1eps_L1delta}) we have
\begin{displaymath}
   \|S(t) (u_{0}-\varphi) \|_{ L^{1}_{\delta}(\R^{\N})} \leq
     \left(\frac{\delta}{\eps}\right)^{\N/2}   \|S(t) (u_{0}-\varphi) \|_{ L^{1}_{\tilde
       \delta (t)}(\R^{\N})} \leq
 \left(\frac{\delta}{\eps}\right)^{\N/2}     \|u_{0}-\varphi \|_{
   L^{1}_{\eps}(\R^{\N})} <
 \left(\frac{\delta}{\eps}\right)^{\N/2} \gamma .
\end{displaymath}

Finally, as in Lemma \ref{heat_solution_Cc}  we have $S(t)\varphi
-\varphi \to 0$ uniformly in $\R^{\N}$ as $t\to 0$. Hence $ \|S(t) \varphi  -
\varphi \|_{L^{1}_{\delta}(\R^{\N})}  \to 0$ as $t\to 0$, which proves
(i).

\noindent (ii)  and (iii).   Fix $x_{0} \in \R^{\N}$ and $\delta>0$ and take $0\leq \varphi \in
C_{c}(\R^{\N})$ such that $0\leq \varphi \leq 1$, $\varphi =1$ on $B(x_{0}, \delta)$, and $\supp(\varphi)
\subset B(x_{0}, 2\delta)$.
Decompose $u_{0} = \varphi u_{0} + (1-\varphi) u_{0}$ and write
$$
u(t,u_{0})= u(t, \varphi u_{0}) + u(t, (1-\varphi) u_{0}).
$$

Then, if $u_{0} \in L^{p}_{\loc}(\R^{\N})$ with $1\leq p <
\infty$ we have  $\varphi u_{0} \in L^{p}(\R^{\N})$ then, as $t\to 0$,
\begin{displaymath}
  u(t,  \varphi u_{0}) \to \varphi u_{0}\quad \mbox{in $ L^{p}(\R^{\N})$}.
\end{displaymath}
In particular $u(t, \varphi u_{0}) \to  u_{0}$ in
$L^{p}(B(x_{0},\delta))$.  If $u_{0} \in C(\R^{\N})$ then $\varphi
u_{0} \in {\rm BUC}(\R^{\N})$ then, as $t\to 0$,
\begin{displaymath}
u(t,  \varphi u_{0}) \to \varphi u_{0}\quad \mbox{in $ L^{\infty}(\R^{\N})$}.
\end{displaymath}
In particular $u(t, \varphi u_{0}) \to  u_{0}$ in
$L^{\infty}(B(x_{0},\delta))$.

Now we prove that, as $t\to 0$,  $u(t, (1-\varphi) u_{0}) \to 0$
uniformly in a ball $B(x_{0}, \tilde \delta)$ for some $\tilde\delta <
\delta$, independent of $x_{0}$; this will conclude the proof of (ii)
and (iii).

For this notice that  for $x \in B(x_{0}, \delta/2)$
\begin{displaymath}
 u(t,  (1-\varphi) u_{0}) (x) =  \frac{1}{(4\pi t)^{\N/2}}
  \int_{|y-x_{0}|\geq \delta} \e^{-\frac{|x-y|^2}{4t}} (1-\varphi)(y) u_0(y)   \,\d y .
\end{displaymath}

Then
$|x-y|\geq |x_{0} - y| - |x-x_{0}| \geq  \delta - \delta/2 =
\delta/2$. Hence for $0<t <t_{0}$ and  $0<\alpha<1$,
$|x-y|^{2} \geq  \alpha
|x-y|^{2} + (1-\alpha)\frac{\delta^{2}}{4}$ and  we obtain
\begin{displaymath}
| u(t,  (1-\varphi) u_{0}) (x)| \leq
\frac{\e^{-(1-\alpha)\frac{\delta^{2}}{16 t}}}{(4\pi t)^{\N/2}}
  \int_{|y-x_{0}|\geq \delta} \e^{-\frac{\alpha|x-y|^2}{4t}} | u_0(y)|   \,\d y .
\end{displaymath}

Now we look for a uniform estimate in $x \in B(x_{0}, \tilde \delta)$ for
the right-hand side above. For this note that for $0<\beta <1$,
\begin{displaymath}
  |x-y|^{2} \geq  |y-x_{0}|^{2} + |x-x_{0}|^{2} - 2|y-x_{0}||x-x_{0}| \geq
  (1-\beta) |y-x_{0}|^{2} + (1-\frac{1}{\beta}) |x-x_{0}|^{2},
\end{displaymath}
thus for  $x \in B(x_{0}, \tilde \delta)$ and $0<t<t_{0} =
\frac{\alpha (1-\beta)}{8\eps}$ we have
\begin{displaymath}
| u(t,   (1-\varphi) u_{0}) (x)| \leq
\frac{\e^{-(1-\alpha)\frac{\delta^{2}}{16 t}}}{(4\pi t)^{\N/2}}
\e^{(\frac{1}{\beta}-1) \frac{|x-x_{0}|^{2}}{4t}}
  \int_{|y-x_{0}|\geq \delta} \e^{-\frac{\alpha (1-\beta)|x_{0}-y|^2}{4t}} | u_0(y)|   \,\d y
\end{displaymath}
\begin{equation} \label{eq:uniformly_2_zero}
  \leq \frac{\e^{-(1-\alpha)\frac{\delta^{2}}{16 t} +
      (\frac{1}{\beta}-1) \frac{\tilde \delta^{2}}{4t}}}{(4\pi t)^{\N/2}}
  \int_{\R^{\N}} \e^{-2\eps |x_{0}-y|^2} | u_0(y)|   \,\d y;
\end{equation}
again
  $|x_{0}-y|^{2} \geq  |y|^{2} + |x_{0}|^{2} - 2|y||x_{0}| \geq
  1/2 |y|^{2} - |x_{0}|^{2}$ gives
\begin{displaymath}
  \int_{\R^{\N}} \e^{-2\eps |x_{0}-y|^2} | u_0(y)|   \,\d y  \leq \e^{ 2\eps |x_{0}|}
  \int_{\R^{N}} \e^{- \eps |y|^2} | u_0(y)|   \,\d y
\end{displaymath}
and so (\ref{eq:uniformly_2_zero}) tends to $0$ as $t\to 0$ uniformly in $x \in
B(x_{0}, \tilde \delta)$  if $(\frac{1}{\beta}-1) \tilde
\delta^{2} < (1-\alpha)\frac{\delta^{2}}{4}$.  Notice, finally, that $\tilde \delta$
does not depend on $x_{0}$. 
\end{proof}

Notice that by comparing $\e^{-\eps |x|^{2}}$ and
  $(1+|x|^{2})^{-m/2}$ it follows that the class of tempered
  distributions of class $(m,0)$ as introduced 
at the end of Section \ref{sec:radon-measures},   satisfies, for all
  $\eps >0$,
  \begin{displaymath}
    \mathscr{C}_{m} (\R^{\N}) \subset     \mathscr{C}_{m+1} (\R^{\N})
    \subset     \mathcal{M}_{\eps} (\R^{\N}) .
  \end{displaymath}

\section{Initial data in $\mathcal{M}_{\eps}(\R^{\N})$: uniqueness}
\label{sec:uniqueness}
\setcounter{equation}{0}

Now we prove a uniqueness result for heat solutions with initial data
$u_{0} \in \mathcal{M}_{\eps}(\R^\N)$. Observe that uniqueness for
\textit{non-negative} weak solutions of (\ref{eq:heat_equation}) can
be found in \cite{Aro71}. On the other hand, one can find a proof of
the uniqueness of classical solutions with no sign assumptions but
with bounded continuous initial data in  \cite{Tychonov} in
dimension one and in e.g. \cite{J} (Chapter 7, page
176) in arbitrary dimensions, provided that they satisfy the pointwise bound
\begin{equation} \label{eq:quadratic_exponential_bound_upt=0}
  |u(x,t)|\leq M \e^{a |x|^{2}} , \ x \in \R^{\N}, \ 0<t<T.
\end{equation}

Here we prove a uniqueness result adapted to initial data in
$\mathcal{M}_\eps(\R^{\N})$, with no sign condition imposed. Observe
that if $u_0\in \mathcal{M}_\eps(\R^\N)$ for some $\eps>0$ and
$u(t)=S(t)u_{0}$ is  as in
(\ref{eq:solution_heat_up_t=0}), then assumptions
(\ref{eq:uniqueness_Du_in_L1delta}),
(\ref{eq:uniqueness_initial_data_semigroup}),
(\ref{eq:uniqueness_initial_data_fastdecay}),
(\ref{eq:uniqueness_u_in_L1eps}) and (\ref{eq:uniqueness_initial_data_Cc})
below are  ensured by  parts (i) and  (iii) in Proposition
\ref{prop:semigroup_estimates}, Corollary
\ref{cor:attain_initial_measure} and part (iii) in Theorem
\ref{thm:properties_sltns_given_u0}. Also
(\ref{eq:uniqueness_semigroup_solution}) is ensured by part (ii) in
Proposition \ref{prop:semigroup_estimates}.

\begin{theorem}
\label{thr:uniqueness}

Suppose that $u$, defined in  $\R^\N \times (0,T]$, is such that for
some $\delta >0$ and  for each $0<t<T$, $u(t) \in
L^{1}_{\delta}(\R^{\N})$. 
\begin{enumerate}
\item[{\rm(i)}] Suppose furthermore  that
  \begin{equation} \label{eq:uniqueness_Du_in_L1delta}
u, \nabla u,  \Delta u \in   L^{1}_{\loc}((0,T),
L^{1}_{\delta}(\R^{\N}))
  \end{equation}
and satisfies $u_{t} -\Delta u=0$ almost everywhere in $\R^{\N}
\times (0,T)$.
Then we have
\begin{equation} \label{eq:uniqueness_semigroup_solution}
u (t)= S(t-s)u(s)
\end{equation}
for any $0<s<t <T$.

\end{enumerate}
 Assume hereafter that $u$ satisfies
 \eqref{eq:uniqueness_semigroup_solution}  for any $0<s<t <T$.
\begin{enumerate}
\item[{\rm(ii)}]
Then  for each $0<t<T$ and every $\varphi \in C_{c}(\R^\N)$ the following limit exist
\begin{displaymath}
  \lim_{s \to 0} \int_{\R^\N} u(s) S(t) \varphi  = \int_{\R^\N} u(t)
  \varphi .
\end{displaymath}

\item[{\rm(iii)}] There exists  $u_0\in \mathcal{M}_\eps(\R^\N)$ for
  some $\eps >0$ and  such that $u(t)=S(t)u_{0}$ for $0<t<T$ if and
  only if for
every $\varphi \in C_{c}(\R^\N)$ and $t$ small enough
\begin{equation} \label{eq:uniqueness_initial_data_semigroup}
  \lim_{s \to 0} \int_{\R^\N} u(s) S(t) \varphi  = \int_{\R^\N}
  S(t)\varphi \, \d u_{0}.
\end{equation}

\item[{\rm(iv)}] Condition \eqref{eq:uniqueness_initial_data_semigroup} is satisfied  provided
  either one of the following holds:

\begin{enumerate}

\item [{\rm(iv-a)}] For any  function
$\phi \in C_{0}(\R^{\N})$ such that $|\phi (x)| \leq A
\e^{-\gamma|x|^{2}}$, $x\in \R^{\N}$, with $\gamma > \eps$ we have, as $ t \to 0$
\begin{equation}  \label{eq:uniqueness_initial_data_fastdecay}
  \lim_{t \to 0} \int _{\R^\N}  \phi u(t)   \to   \int _{\R^\N}  \phi
  \, \d u_{0} .
\end{equation}

\item[{\rm(iv-b)}] For some $\tau \leq T$ small and $0<t\leq \tau$ we
  have  $u(t) \in
L^{1}_{\eps}(\R^\N)$ with
  \begin{equation} \label{eq:uniqueness_u_in_L1eps}
  \int_{\R^\N} \e^{- \eps |x|^{2}}|u(x,t)| \, \d x \leq M \quad t\in (0,\tau];
  \end{equation}
i.e. $u\in L^{\infty}((0,\tau], L^{1}_{\eps}(\R^{\N}))$
and for every $\varphi \in C_{c}(\R^\N)$,   as $ t \to 0$
\begin{equation} \label{eq:uniqueness_initial_data_Cc}
  \int_{\R^\N}  \varphi \, u(t) \to    \int_{\R^\N}  \varphi \, \d
  u_{0} .
\end{equation}

\end{enumerate}
\end{enumerate}

\end{theorem}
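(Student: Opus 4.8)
The plan is to establish the four parts in sequence, with part (i) carrying essentially all of the analytic difficulty and the remaining parts following from the symmetry and semigroup machinery already in hand. For part (i) I would argue by duality against the backward heat equation. Fix $0<s<t_{1}<T$ and a datum $\psi$ with fast Gaussian decay, $|\psi(x)|\le A\e^{-\gamma|x|^{2}}$ for large $\gamma$, and set $w(\sigma)=S(t_{1}-\sigma)\psi$ for $\sigma\in[s,t_{1}]$, so that $w_{\sigma}+\Delta w=0$. By Lemma \ref{lem:estimate_fast_decay_initialdata} the function $w(\sigma)$, together with its spatial derivatives, decays like $\e^{-\gamma'|x|^{2}}$ with $\gamma'$ as close as we like to $1/4(t_{1}-\sigma)$ once $\gamma$ is large; choosing $t_{1}-s<T(\delta)=1/4\delta$ we can arrange $\gamma'>\delta$ throughout $[s,t_{1}]$, so that $w,\nabla w,\Delta w$ pair absolutely against $u,\nabla u,\Delta u\in L^{1}_{\delta}(\R^{\N})$. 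Using $u_{t}=\Delta u$ and integrating by parts in space — the Gaussian decay of $w$ and its derivatives removes the boundary contributions at infinity — I would show
\begin{displaymath}
\frac{\d}{\d\sigma}\int_{\R^{\N}}u(\sigma)\,w(\sigma)\,\d x=\int_{\R^{\N}}\bigl((\Delta u)\,w-u\,\Delta w\bigr)\,\d x=0,
\end{displaymath}
so that the pairing is constant on $[s,t_{1}]$. Evaluating at $\sigma=t_{1}$ (where $w=\psi$) and at $\sigma=s$ gives $\int u(t_{1})\psi=\int u(s)\,S(t_{1}-s)\psi=\int S(t_{1}-s)u(s)\,\psi$, and since the admissible $\psi$ are dense this yields $u(t_{1})=S(t_{1}-s)u(s)$. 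For gaps $t_{1}-s\ge T(\delta)$ I would chain this identity across intermediate times, using the semigroup (Chapman--Kolmogorov) property on the range where the kernel integrals converge. Justifying the global integration by parts and the differentiation under the integral sign — via \eqref{eq:uniqueness_Du_in_L1delta} and the explicit decay of Lemma \ref{lem:estimate_fast_decay_initialdata}, with careful attention to the time window $t_{1}-s<T(\delta)$ — is the main obstacle of the whole theorem.

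Assuming now $u(t)=S(t-s)u(s)$, the crux of part (ii) is the time-symmetry identity
\begin{displaymath}
\int_{\R^{\N}}u(s)\,S(t)\varphi=\int_{\R^{\N}}u(t)\,S(s)\varphi,\qquad\varphi\in C_{c}(\R^{\N}),
\end{displaymath}
obtained by writing $S(t)\varphi=S(t-s)\bigl(S(s)\varphi\bigr)$, applying the symmetry \eqref{Fubbini_4-S(t)} of Lemma \ref{lem:preparing_4_Fubini} (legitimate because $S(s)\varphi$ has fast Gaussian decay) and using $S(t-s)u(s)=u(t)$; it is simply Chapman--Kolmogorov together with Fubini. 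Subtracting $\int u(t)\varphi$ gives
\begin{displaymath}
\int_{\R^{\N}}u(s)\,S(t)\varphi-\int_{\R^{\N}}u(t)\,\varphi=\int_{\R^{\N}}u(t)\,\bigl(S(s)\varphi-\varphi\bigr),
\end{displaymath}
and since $u(t)\in L^{1}_{\delta}(\R^{\N})$ is fixed while Lemma \ref{heat_solution_Cc}(iii) gives $\e^{\delta|x|^{2}}\bigl(S(s)\varphi-\varphi\bigr)\to0$ uniformly as $s\to0$, the right-hand side is bounded by $\|u(t)\|_{L^{1}_{\delta}(\R^{\N})}\,\|\e^{\delta|\cdot|^{2}}(S(s)\varphi-\varphi)\|_{L^{\infty}(\R^{\N})}\to0$. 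Hence the limit exists and equals $\int u(t)\varphi$.

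Parts (iii) and (iv) are then bookkeeping on top of (ii). For (iii), if $u(t)=S(t)u_{0}$ with $u_{0}\in\M_{\eps}(\R^{\N})$ then (ii) gives $\int u(s)S(t)\varphi\to\int u(t)\varphi$, and the symmetry $\int u(t)\varphi=\int S(t)u_{0}\,\varphi=\int S(t)\varphi\,\d u_{0}$ from Lemma \ref{lem:preparing_4_Fubini} yields \eqref{eq:uniqueness_initial_data_semigroup}; conversely, combining \eqref{eq:uniqueness_initial_data_semigroup} with (ii) gives $\int u(t)\varphi=\int S(t)\varphi\,\d u_{0}=\int S(t)u_{0}\,\varphi$ for all $\varphi\in C_{c}(\R^{\N})$ and small $t$, whence $u(t)=S(t)u_{0}$, and the semigroup relation $u(t)=S(t-s)u(s)=S(t)u_{0}$ propagates this to $0<t<T$. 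For (iv-a) I would simply take $\phi=S(t)\varphi$ in \eqref{eq:uniqueness_initial_data_fastdecay}: by Lemma \ref{heat_solution_Cc} this lies in $C_{0}(\R^{\N})$ and decays like $\e^{-\gamma(t)|x|^{2}}$ with $\gamma(t)>\eps$ for $t$ small, so \eqref{eq:uniqueness_initial_data_fastdecay} reads exactly as \eqref{eq:uniqueness_initial_data_semigroup}. For (iv-b), with $t$ small $S(t)\varphi$ decays faster than $\e^{-\eps|x|^{2}}$, so I would approximate it in the norm $\|\e^{\eps|\cdot|^{2}}\,\cdot\,\|_{L^{\infty}(\R^{\N})}$ by some $\psi\in C_{c}(\R^{\N})$; the uniform bound \eqref{eq:uniqueness_u_in_L1eps} controls $\int u(s)(S(t)\varphi-\psi)$ by $M\|\e^{\eps|\cdot|^{2}}(S(t)\varphi-\psi)\|_{L^{\infty}(\R^{\N})}$, the corresponding tail against $u_{0}\in\M_{\eps}(\R^{\N})$ is likewise small, and \eqref{eq:uniqueness_initial_data_Cc} handles the piece $\int u(s)\psi\to\int\psi\,\d u_{0}$ as $s\to0$; an $\eps/3$ argument then delivers \eqref{eq:uniqueness_initial_data_semigroup}.
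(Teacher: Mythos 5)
Your proposal is correct and follows essentially the same route as the paper's own proof: part (i) is the paper's argument that $I(\sigma)=\int u(\sigma)\,S(t-\sigma)\varphi$ has vanishing derivative (the paper tests with $\varphi\in C_{c}(\R^{\N})$ and uses Lemma \ref{heat_solution_Cc} where you test with Gaussian-decaying $\psi$ and Lemma \ref{lem:estimate_fast_decay_initialdata}, a cosmetic difference), parts (ii)--(iii) are the same symmetry-plus-Fubini manipulations via Lemma \ref{lem:preparing_4_Fubini}, and your weighted-sup-norm approximation of $S(t)\varphi$ by compactly supported functions in (iv-b) is exactly the paper's cutoff decomposition $\phi_{n}S(t)\varphi$ phrased as an $\eps/3$ argument. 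The technical points you flag (justifying differentiation under the integral, Green's formula with Gaussian weights, the restricted window $t-s<T(\delta)$ and subsequent chaining, and continuity of the pairing at the endpoints) are precisely the ones the paper handles via Lemma \ref{lem:green_formula} and the continuity of $s\mapsto u(s)$ in $L^{1}_{\delta}(\R^{\N})$ coming from \eqref{eq:uniqueness_Du_in_L1delta}.
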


\begin{proof}
For (i) the key to the proof is to show that for every $\varphi \in C_{c}(\R^\N)$ and $0<s<t<T$ one has
\begin{equation} \label{eq:integral_semigroup_solution}
   \int_{\R^\N} u(t) \varphi = \int_{\R^\N} u(s) S(t-s) \varphi
\end{equation}
for some small enough $T$ depending only on $\delta>0$ in
(\ref{eq:uniqueness_Du_in_L1delta}).
In such a case,  we then  apply  (\ref{Fubbini_4-S(t)}) with
$\mu = u(s)\in L^{1}_{\delta}(\R^{\N})$ and $\phi = S(t-s) \varphi$,
provided  $0\leq t-s \leq T$ is small enough (depending on $\delta>0$)
such that from Lemma
\ref{heat_solution_Cc},  $\phi$ satisfies the assumption in  Lemma
\ref{lem:preparing_4_Fubini} to obtain that the  right hand side of
(\ref{eq:integral_semigroup_solution}) equals $\D  \int_{\R^\N} S(t-s)
u(s) \varphi$. Hence, we get  (\ref{eq:uniqueness_semigroup_solution}) for $0<s<t<T$.
Then for $0<t_{0}<T$ consider $v(t) = u(t+t_{0})$ for $0\leq t\leq T$ which satisfies
the assumptions in (i). Hence (\ref{eq:uniqueness_semigroup_solution})
implies in particular $u(t+t_{0}) = S(t) u(t_{0})$ for $0\leq t_{0} , t\leq T$
which combined with (\ref{eq:semigroup_solution}) gives (\ref{eq:uniqueness_semigroup_solution}) for
 $0<s<t<2T$. In a finite numer of steps we obtain this property on any
finite time interval.

For the  proof of (\ref{eq:integral_semigroup_solution}) we  fix
$0< t<T$ and    differentiate
$$
I(s) :=
\int_{\R^\N} u(s) S(t-s) \varphi\qquad s\in (0,t)
$$
to obtain
\begin{equation} \label{eq:expression_4_derivative_I(s)}
  I'(s) =  \int_{\R^\N} \partial_{s} u (s) S(t-s) \varphi -  \int_{\R^\N} u(s)
  \partial_{s} S(t-s) \varphi
  =  \int_{\R^\N} \Delta u (s) S(t-s) \varphi -  \int_{\R^\N} u(s)
   \Delta S(t-s) \varphi .
\end{equation}

For this observe that from   Lemma
\ref{heat_solution_Cc} and
decreasing $T$ if necessary but depending only on $\delta$,   we have that for all $0<t<T$
\begin{displaymath}
   |S(t) \varphi(x)| \leq c  \e^{-\alpha |x|^{2}} , \quad x\in
   \R^{\N}, \quad 0<t<T,
\end{displaymath}
with $\alpha = \alpha (T) > \delta$, with $\delta$ as in
(\ref{eq:uniqueness_Du_in_L1delta}).
Also,  by (\ref{eq:solution_heat_C0}) and proceeding as in
(\ref{eq:point_bound_further_derivatives}) and as in
(\ref{eq:point_bound_heat_C0}) we obtain
\begin{displaymath}
|  \Delta S(t) \varphi (x) | \leq
 \frac{c}{t^{\N/2 +1}}
  \e^{-(1-\delta)^{2}\frac{|x|^2}{4t} + \frac{(1-\delta)^{2}}{\delta} \frac{R^2}{4t}} \int_{B(0,R)}
|\varphi (y)|  \,\d y \quad x\in
   \R^{\N}, \quad 0<t<T,
\end{displaymath}
and again as in Lemma \ref{heat_solution_Cc},  we obtain
\begin{displaymath} \label{eq:bound_Delta_heat_C0}
  |\Delta  S(t) \varphi(x)| \leq c \e^{-\alpha |x|^{2}}  \quad x\in
   \R^{\N}, \quad 0<t<T,
\end{displaymath}
with $\alpha= \alpha(T) >\delta$ and some $c=c(\varphi, R, T)$  with $\delta$ as
in (\ref{eq:uniqueness_Du_in_L1delta}).

With these, using (\ref{eq:uniqueness_Du_in_L1delta}),  the integrand on the right-hand side of
(\ref{eq:expression_4_derivative_I(s)}) has a bound
\begin{displaymath}
 | \Delta u (\cdot)| | S(t-\cdot) \varphi| + | u(\cdot)| | \Delta S(t-\cdot) \varphi|
 \in L^{1}_{loc}((0,t), L^{1}(\R^{\N})) . 
\end{displaymath}
 Hence, by differentiation inside the
integral, (\ref{eq:expression_4_derivative_I(s)}) is proved.

Now observe that (\ref{eq:uniqueness_Du_in_L1delta}) and the upper
bounds above for $S(t)\varphi$, $\Delta S(t) \varphi$   mean that
we can use a.e. $s\in (0,t)$, Lemma
\ref{lem:green_formula} below  to integrate
by parts in (\ref{eq:expression_4_derivative_I(s)}) to get  $I'(s)=0$
for $s\in (0,t)$
which gives that $I(s)$ is constant in $(0,t)$.

Now we show that, as $s\to t$ we have  $I(s) \to I(t) = \D \int_{\R^{\N}}
u(t) \varphi$. For this, write
\begin{displaymath}
  I(s) = \int_{\R^{\N}} u(s) \varphi + \int_{\R^{\N}} u(s) \big(
  S(t-s) \varphi -\varphi \big)
\end{displaymath}
and observe that from the assumptions $\partial_{t} u = \Delta u \in
L^{1}_{\loc}((0,T), L^{1}_{\delta}(\R^{\N}))$ and in particular,
$u(s)$ is continuous as $s\to t$ in $L^{1}_{\delta}(\R^{\N})$. On the
other hand from Lemma \ref{heat_solution_Cc} we have $\e^{\delta |x|^{2}} \big (S(t-s)
\varphi(x)-\varphi(x) \big) \to 0$ uniformly in $\R^{\N}$ as $s \to
t$. Hence (\ref{eq:integral_semigroup_solution})  and part (i) are
proved.

Now we prove (ii).  For  fixed $0<t <T$ and $s$ small, from  Lemma \ref{heat_solution_Cc},
we get that $u(t) S(s) \varphi$ is integrable and then, from
(\ref{eq:uniqueness_semigroup_solution}),  using Lemma
\ref{lem:preparing_4_Fubini}
again (with $\mu = u(s) \in L^{1}_{\delta}(\R^{\N})$ and $\phi=S(s) \varphi$) and (\ref{eq:semigroup_solution}),  we get
\begin{displaymath}
\int _{\R^\N}  u(t) S(s) \varphi = \int _{\R^\N}  S(t-s)u(s) S(s)
\varphi = \int _{\R^\N}  u(s) S(t-s) S(s)
\varphi =\int _{\R^\N}  u(s) S(t) \varphi .
\end{displaymath}
Now, from  Lemma \ref{heat_solution_Cc} we have that, as
$s \to 0$,
\begin{displaymath}
  | \int _{\R^\N}  u(t) \big( S(s) \varphi -\varphi \big) | \leq
  \int_{\R^\N} \e^{\delta |x|^{2}}|u(t)| \big(S(s) \varphi   -\varphi)
  \, \e^{-\delta |x|^{2}} \to 0
\end{displaymath}
and then
the following limit exists
\begin{displaymath}
  \lim_{s\to 0} \int _{\R^\N}  u(s) S(t) \varphi  =  \lim_{s\to 0}
  \int _{\R^\N}  u(t) S(s) \varphi =\int _{\R^\N}
  u(t)  \varphi .
\end{displaymath}
This concludes the proof of part (ii).

To prove (iii) notice that from  Lemma \ref{heat_solution_Cc} with $t$
small and Corollary
\ref{cor:attain_initial_measure} we have that condition
(\ref{eq:uniqueness_initial_data_semigroup}) is necessary for $u(t)$
to be equal to  $S(t)u_{0}$. Conversely if
(\ref{eq:uniqueness_initial_data_semigroup})  is satisfied then for
$t$ small and   $\varphi \in C_{c}(\R^\N)$
\begin{displaymath}
 \int _{\R^\N}  u(t)  \varphi  =     \lim_{s\to 0} \int _{\R^\N}  u(s)
 S(t) \varphi  =  \int _{\R^\N}   S(t) \varphi \, \d u_{0}
\end{displaymath}
Then by  (\ref{Fubbini_4-S(t)}) in Lemma \ref{lem:preparing_4_Fubini}
with $\mu = u_{0} \in \mathcal{M}_{\eps}(\R^{\N})$, $\phi = \varphi$,
we get
\begin{displaymath}
 \int _{\R^\N}  u(t)  \varphi  =   \int _{\R^\N}   S(t) \varphi \,
\d u_{0} = \int _{\R^\N}  \varphi   S(t)  u_{0}
\end{displaymath}
 for every $\varphi \in C_{c}(\R^\N)$  and then $u(t) = S(t) u_{0}$
 for $t$ small. This and  (\ref{eq:uniqueness_semigroup_solution}) proves
 part (iii).

For part (iv-a) it is now  clear that if $u$ satisfies
(\ref{eq:uniqueness_initial_data_fastdecay}) then Lemma
\ref{heat_solution_Cc} and $t$ small allows to take $\phi=S(t)\varphi$
in (\ref{eq:uniqueness_initial_data_fastdecay}) to get that
(\ref{eq:uniqueness_initial_data_semigroup}) satisfied.

Finally,
assuming (\ref{eq:uniqueness_u_in_L1eps}) and
(\ref{eq:uniqueness_initial_data_Cc}) we prove  part (iv-b).
For this  consider a sequence of smooth functions $0\leq
\phi_{n}\leq 1$ with $supp(\phi_{n}) \subset B(0,2n)$ and $\phi_{n}=1$
in $B(0,n)$. Then we write
\begin{displaymath}
  \int_{\R^\N} u(s) S(t)    \varphi  =   \int_{\R^\N} u(s)
  \phi_{n}S(t)    \varphi  +  \int_{\R^\N} u(s) (1-\phi_{n}) S(t)    \varphi
\end{displaymath}
and then
\begin{displaymath}
  \int_{\R^\N} u(s) S(t)    \varphi -   \int_{\R^\N} S(t)
  \varphi   \, \d   u_{0}  =   I_{1} + I_{2} + I_{3} =
  \end{displaymath}
  \begin{displaymath}
\big( \int_{\R^\N} u(s)
  \phi_{n}S(t)    \varphi -  \int_{\R^\N} \phi_{n}S(t)
  \varphi  \, \d  u_{0} \big)  +\int_{\R^\N}  (\phi_{n}-1) S(t)    \varphi  \, \d  u_{0}  +
  \int_{\R^\N} u(s) (1-\phi_{n}) S(t)
  \varphi .
\end{displaymath}

Now  $I_{3}$ goes to zero with $n\to \infty$ uniformly in $0<s<\tau$. To see
this, observe that by  Lemma \ref{heat_solution_Cc},  for  some
$t_{0}>0$ small and $0< t < t_{0}$ we have $0\leq
\e^{\eps|x|^{2}} (1-\phi_{n})| S(t)\varphi | \leq
c \e^{-\gamma |x|^{2}} (1-\phi_{n})$, $\gamma >0$  and $ 0\leq 1-\phi_{n} \to
0$ uniformly in compact sets as $n\to \infty$. Hence $\e^{\eps|x|^{2}} (1-\phi_{n})
S(t)\varphi \to 0$ as $n\to \infty$, uniformly in $\R^{\N}$ and
uniformly in $0<t<t_{0}$. Thus by
(\ref{eq:uniqueness_u_in_L1eps}),
\begin{displaymath}
  I_{3} = \int_{\R^\N} \e^{-\eps|x|^{2}} u(s) \e^{\eps|x|^{2}}
  (1-\phi_{n}) S(t)  \varphi \to 0, \quad n\to \infty
\end{displaymath}
uniformly for  $0<s<\tau$ and $0<t<t_{0}$.

With the same argument,  $I_{2}$ goes to zero with $n\to
\infty$ uniformly in $0<t<t_{0}$  since $u_{0}\in \mathcal{M}_{\eps}(\R^{\N})$.
Finally, by (\ref{eq:uniqueness_initial_data_Cc}), for any fixed $n$
and $0<t<t_{0}$, $I_{1} \to 0$ as $s\to 0$.
Hence, for any  $0<t<t_{0}$ we get  $ \lim_{s \to 0} \int_{\R^\N} u(s) S(t) \varphi  = \int_{\R^\N}
  S(t)\varphi \, \d u_{0}$ and part (iv-b) is proved.
\end{proof}

Notice that   condition (\ref{eq:uniqueness_initial_data_fastdecay})
is  precisely the definition of   ``initial data''  for the weak
solutions considered in \cite{Aro68}, page 319.

Also observe that, for $t$ small, from Lemma
\ref{lem:preparing_4_Fubini}, condition
(\ref{eq:integral_semigroup_solution}) is indeed equivalent to
(\ref{eq:uniqueness_semigroup_solution}) provided $u(s) \in
L^{1}_{\delta} (\R^{\N})$ for some $\delta>0$.

Finally, observe that if  we assume  $u\in C(\R^{\N} \times (0,T])$ is such
that for any $0<s<T$ there exists $M,a>0$  such that
\begin{displaymath}
  |u(x,t)|\leq M \e^{a |x|^{2}} , \ x \in \R^{\N}, \ s\leq t<T.
\end{displaymath}
from the results in  in  \cite{Tychonov} and \cite{J} (Chapter
7, page 176), then  (\ref{eq:uniqueness_semigroup_solution}) is satisfied.
Also, observe that from Lemma \ref{lem:estimate_from_x=0} implies that
$S(t)u_{0}$ satisfies the quadratic exponential bound above.
Therefore, if additionally $u$ satisfies
(\ref{eq:uniqueness_initial_data_fastdecay}) or
(\ref{eq:uniqueness_u_in_L1eps}) and
(\ref{eq:uniqueness_initial_data_Cc}) then we have
$u(t)=S(t)u_{0}$. These contitions are slightly weaker than the
classical Tychonov condition
(\ref{eq:quadratic_exponential_bound_upt=0}).

\section{Global existence versus finite-time
  blowup}
\label{sec:blowup}
\setcounter{equation}{0}

From the results in Section \ref{sec:existence_regularity} it
is natural to  set
\begin{displaymath} 
  L^{1}_{0}(\R^\N) = \bigcap_{\eps >0}
L^{1}_{\eps} (\R^\N)
\end{displaymath}
 and
 \begin{displaymath} 
   \mathcal{M}_{0}(\R^\N) = \bigcap_{\eps >0} \mathcal{M}_{\eps} (\R^\N
   ) .
 \end{displaymath}
 Clearly $L_0^1(\R^\N)\subset\M_0(\R^\N)$.

It is a simple consequence of Lemma \ref{whyL1e} that these
are precisely the collections of initial data for which (non-negative) solutions
exist for all time.

\begin{proposition}\label{prop:global_existence_L10}

If  $u_0\in \mathcal{M}_0(\R^\N)$ then $u(x,t)$, given by
\eqref{eq:solution_heat_up_t=0}, is well defined for all $x\in
\R^{\N}$ and $t>0$; in particular $u(t)\in L^1_0(\R^\N)$ for every
$t>0$.  
Conversely, if $u_{0}\in \mathcal{M}_{\loc} (\R^\N)$ with $u_0\ge0$ and  $u(x,t)$ is
defined for all $t>0$ then $u_0\in \mathcal{M}_0(\R^\N)$.

\end{proposition}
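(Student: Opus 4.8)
This proposition is essentially a direct corollary of Lemma~\ref{whyL1e}, and the strategy is to unpack the two directions of that lemma at each $\eps$ and then intersect over $\eps$. For the forward direction, I would fix $u_0\in\mathcal{M}_0(\R^\N)$ and an arbitrary time $t>0$, and show that $u(x,t)$ is finite (indeed locally bounded) and that $u(t)\in L^1_0(\R^\N)$. For the converse, I would assume $0\le u_0\in\mathcal{M}_\loc(\R^\N)$ gives a solution defined for all $t>0$, and deduce membership in every $\mathcal{M}_\eps(\R^\N)$, hence in the intersection $\mathcal{M}_0(\R^\N)$.

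\medskip

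\noindent\textbf{Forward direction.} Fix $t>0$. Since $u_0\in\mathcal{M}_0(\R^\N)=\bigcap_{\eps>0}\mathcal{M}_\eps(\R^\N)$, in particular $u_0\in\mathcal{M}_\eps(\R^\N)$ for the choice $\eps=1/8t$, so that $T(\eps)=1/4\eps=2t>t$. By part~(i) of Theorem~\ref{thm:properties_sltns_given_u0}, the representation formula \eqref{eq:solution_heat_up_t=0} therefore defines $u(x,t)$ as a finite number (an element of $L^\infty_\loc$) for this $t$, which lies in $(0,T(\eps))$; since $t>0$ was arbitrary, $u(x,t)$ is well defined for all $x\in\R^\N$ and all $t>0$. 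For the claim $u(t)\in L^1_0(\R^\N)$, I would invoke part~(i) of Lemma~\ref{whyL1e}: given any target $\delta>0$, choose $\eps>0$ small enough that $\eps(t)=\eps/(1-4\eps t)<\delta$ while still keeping $t<T(\eps)$ (this is possible since $\eps(t)\to 0$ as $\eps\to 0^+$ for fixed $t$). Then $u_0\in\mathcal{M}_\eps(\R^\N)$ gives $u(t)\in L^1_\delta(\R^\N)$. As $\delta>0$ was arbitrary, $u(t)\in\bigcap_{\delta>0}L^1_\delta(\R^\N)=L^1_0(\R^\N)$.

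\medskip

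\noindent\textbf{Converse direction.} Suppose $0\le u_0\in\mathcal{M}_\loc(\R^\N)$ and $u(x,t)$ is (finite and) defined for all $t>0$. Fix any $\eps>0$; I want $u_0\in\mathcal{M}_\eps(\R^\N)$. Pick any $t>1/4\eps$, so that $\eps>1/4t$. Since the solution is defined at time $t$, in particular $u(x,t)<\infty$ at some (indeed every) point $x$, so the converse part of Lemma~\ref{whyL1e} applies and yields $u_0\in\mathcal{M}_{\eps'}(\R^\N)$ for every $\eps'>1/4t$; in particular, since our chosen $\eps$ satisfies $\eps>1/4t$, we get $u_0\in\mathcal{M}_\eps(\R^\N)$. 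As $\eps>0$ was arbitrary, $u_0\in\bigcap_{\eps>0}\mathcal{M}_\eps(\R^\N)=\mathcal{M}_0(\R^\N)$.

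\medskip

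\noindent\textbf{The main obstacle.} There is no substantial analytic obstacle here; the entire content sits in Lemma~\ref{whyL1e} and Theorem~\ref{thm:properties_sltns_given_u0}, which the proposition simply packages by intersecting over $\eps$. The only point requiring a little care is the quantifier bookkeeping: in the forward direction one must choose $\eps$ \emph{depending on the target $\delta$ and on $t$} (not a single $\eps$ working for all $\delta$), and verify that the map $\eps\mapsto\eps(t)=\eps/(1-4\eps t)$ does indeed sweep out all sufficiently small positive values as $\eps\to0^+$ for fixed $t$, so that $L^1_0$ membership follows by letting $\delta\to0$. Likewise in the converse one must confirm that every $\eps>0$ can be reached by some admissible $t$ (namely any $t>1/4\eps$), which is immediate. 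I would make sure to state explicitly that the non-negativity hypothesis is used only in the converse (the forward direction needs no sign condition, consistent with the ``no requirement for $u_0$ to be non-negative'' remark preceding Theorem~\ref{thm:properties_sltns_given_u0}).
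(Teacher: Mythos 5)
Your proof is correct and follows exactly the route the paper intends: the paper states this proposition without a detailed proof, noting only that it is ``a simple consequence of Lemma \ref{whyL1e}'', and your argument is precisely the careful unpacking of that lemma (together with Theorem \ref{thm:properties_sltns_given_u0}(i) for pointwise finiteness), with the quantifier bookkeeping over $\eps$ and $\delta$ handled correctly in both directions.
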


Note that $L^1_0(\R^\N)$ is a natural space of functions in which to study the heat semigroup, since $S(t)\:L^1_0(\R^N)\to L^1_0(\R^N)$ for every $t\ge0$; this form the main topic of our paper \cite{RR2}. For the time being, one can note that if $u_0\in \M_0(\R^N)$ then the estimate from Proposition \ref{prop:semigroup_estimates} can be reinterpreted as
$$
\|u(t)\|_{L^1_\delta}\le\|u_0\|_{\M_{\delta(t)}},\qquad\mbox{where}\quad\delta(t):=\frac{\delta}{1+4\delta t}.
$$

The collection $ L^1_0(\R^\N)$ is a large set of
functions: it contains $L^{p}(\R^\N)$ and $L^p_U(\R^\N)$ for every
$1\le p\leq \infty$, and (for example) any function that satisfies
$$
|f(x)| \le M\e^{k|x|^\alpha}, \quad x\in \R^{\N}
$$
for some $M>0$ and $\alpha<2$. It also contains functions that are not
bounded by any quadratic exponential, such as 
\begin{displaymath}
f(x)= \sum_k \alpha_{k} \, \Chi_{B(x_{k},r_k)} (x) , \quad \alpha_{k} = \e^{|x_{k}|^{3}}
\end{displaymath}
   with $|x_{k}| \to \infty$ and $r_{k} \to 0$ such that $r_{k}^{\N}
   \leq \frac{1}{\alpha_{k} k^{2}}$.

We show below that   $\mathcal{M}_0(\R^\N)$  also contains the space
of uniform measures
$\mathcal{M}_U(\R^\N)$ defined as the set of measures  $\mu \in
\mathcal{M}_{\loc}(\R^\N)$  such that
\begin{equation} \label{M_U}
\sup_{x \in \R^\N} \ \int_{B(x,1)} \d |\mu(y)|  < \infty
\end{equation}
with norm
\begin{equation}  \label{eq:M_U_norm}
\|\mu\|_{\mathcal{M}_{U}(\R^\N)} = \sup_{x \in \R^\N} \   \int_{B(x,1)} \d |\mu(y)|.
\end{equation}
This is a Banach space, see Lemma
\ref{lem:MU_banach}.

In fact,  as a consequence  of Theorem
\ref{thm:properties_sltns_given_u0}, we can show that the uniform
space $\mathcal{M}_{U}(\R^\N)$ is precisely the set of initial data for which
non-negative  solutions
of the heat equation given by (\ref{eq:solution_heat_up_t=0}) remain  bounded in
$\R^\N$ for positive times. See \cite{ACDRB2004} for results of the
heat equation between uniform spaces  $L^p_U(\R^\N)$,  $1\leq p
<\infty$, the collection of all
functions $\phi\in L^p_{\loc}(\R^\N)$  such that
\begin{displaymath} 
\sup_{x \in \R^\N} \ \int_{B(x,1)} |\phi(y)|^{p} \, \d y < \infty
\end{displaymath}
with norm
$\|\phi\|_{L^{p}_{U}(\R^\N)} = \sup_{x \in \R^\N} \
\|\phi\|_{L^{p}(B(x,1))}$.
For $p=\infty$ we have $L^{\infty}_{U}(\R^\N) = L^{\infty}(\R^\N)$ with norm
$\|\phi\|_{L^{\infty}_{U}(\R^\N)} = \sup_{x \in \R^\N} \
\|\phi\|_{L^{\infty}(B(x,1))} =
\|\phi\|_{L^{\infty}(\R^\N)}$.

\begin{proposition}\label{boundediff}
\begin{itemize}
  \item [\rm(i)] If $u_0\in \mathcal{M}_{U}(\R^\N)$ then $u_0\in
    \mathcal{M}_{0}(\R^\N)$ and $u(t)\in
    L^{\infty}(\R^\N)$ for all $t>0$ and for every $1\leq q \leq
    \infty$
    \begin{displaymath}
      \|u(t)\|_{L^{q}_{U}(\R^\N)}  \leq
M_0 \Big( t^{-\frac{\N}{2}(1-\frac{1}{q})} +1 \Big)
\|u_{0}\|_{\mathcal{M}_U(\R^\N)} .
    \end{displaymath}

\item[\rm(ii)]
Conversely, assume that $0\leq u_0\in \mathcal{M}_0(\R^\N)$.  If $0\le
u(t_{0})\in L^{\infty}(\R^\N)$ for some $t_{0}>0$
then
\begin{displaymath}
  u_{0} \in \mathcal{M}_{U}(\R^\N);
\end{displaymath}
hence $u(t)\in L^{\infty}(\R^\N)$ for all $t>0$.
  \end{itemize}

\end{proposition}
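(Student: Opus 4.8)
The plan is to treat the two implications separately; the converse (part~(ii)) is short, while part~(i) carries essentially all the work.

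For part~(ii) I would exploit that the heat kernel is bounded below on a unit ball. Since $u_0\ge0$ and $u(t_0)\in L^\infty(\R^\N)$, for every $x\in\R^\N$
\[
u(x,t_0)=\frac1{(4\pi t_0)^{\N/2}}\int_{\R^\N}\e^{-|x-y|^2/4t_0}\,\d u_0(y)\ge\frac{\e^{-1/4t_0}}{(4\pi t_0)^{\N/2}}\int_{B(x,1)}\d u_0(y),
\]
so that $\int_{B(x,1)}\d u_0\le(4\pi t_0)^{\N/2}\e^{1/4t_0}\|u(t_0)\|_{L^\infty(\R^\N)}$. Taking the supremum over $x$ gives exactly the bound \eqref{eq:M_U_norm} defining $\mathcal{M}_U(\R^\N)$, hence $u_0\in\mathcal{M}_U(\R^\N)$; the concluding assertion that $u(t)\in L^\infty(\R^\N)$ for all $t>0$ is then immediate from part~(i).

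For part~(i) the heart of the matter is a single covering estimate. Fixing $x\in\R^\N$ and $t>0$, I would tile $\R^\N$ by the unit cubes $\{Q_k\}_{k\in\Z^\N}$ centred at the integer lattice shifted by $x$. Since each $Q_k$ lies in a ball of radius $\sqrt\N/2$, the defining property \eqref{M_U} (applied on balls of a fixed radius, whose mass is controlled by a dimensional constant times $\|u_0\|_{\mathcal{M}_U(\R^\N)}$) bounds $\int_{Q_k}\d|u_0|$ by $C_\N\|u_0\|_{\mathcal{M}_U(\R^\N)}$, while for $y\in Q_k$ one has $|x-y|\ge r_k:=\max(|k|-\sqrt\N/2,\,0)$. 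Summing the Gaussian weight over the tiling therefore gives
\[
\int_{\R^\N}\e^{-|x-y|^2/4t}\,\d|u_0|(y)\le C_\N\|u_0\|_{\mathcal{M}_U(\R^\N)}\,\Sigma(t),\qquad \Sigma(t):=\sum_{k\in\Z^\N}\e^{-r_k^2/4t}.
\]
Comparing $\Sigma(t)$ with the corresponding integral shows $\Sigma(t)\le C_\N(1+t^{\N/2})$ (it stays bounded as $t\to0$ and grows like $t^{\N/2}$ as $t\to\infty$). Taking $x=0$ shows $u_0\in\mathcal{M}_{1/4t}(\R^\N)$ for every $t>0$, hence $u_0\in\mathcal{M}_0(\R^\N)$ (equivalently, via the converse half of Lemma \ref{whyL1e}), while dividing by $(4\pi t)^{\N/2}$ yields the uniform bound $\|u(t)\|_{L^\infty(\R^\N)}\le C_\N(t^{-\N/2}+1)\|u_0\|_{\mathcal{M}_U(\R^\N)}$, which is the case $q=\infty$ of the claim and in particular shows $u(t)\in L^\infty(\R^\N)$ for all $t>0$.

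It then remains to interpolate down to finite $q$. I would first establish the endpoint $q=1$: by Fubini,
\[
\int_{B(x,1)}|u(y,t)|\,\d y\le\frac1{(4\pi t)^{\N/2}}\int_{\R^\N}\Big(\int_{B(x,1)}\e^{-|y-z|^2/4t}\,\d y\Big)\,\d|u_0|(z),
\]
and I would split the $z$-integral into the finitely many cubes meeting $B(x,2)$ — where the inner integral is at most $(4\pi t)^{\N/2}$ and the total mass is $\le C_\N\|u_0\|_{\mathcal{M}_U(\R^\N)}$ — and the remaining far cubes, where the inner integral carries a Gaussian factor decaying in $|k|$ that, after division by $(4\pi t)^{\N/2}$ and summation, is again controlled by $\Sigma(t)\le C_\N(1+t^{\N/2})$. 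This produces $\|u(t)\|_{L^1_U(\R^\N)}\le C_\N\|u_0\|_{\mathcal{M}_U(\R^\N)}$, bounded uniformly in $t>0$. Finally, the elementary interpolation $\|f\|_{L^q(B)}\le\|f\|_{L^\infty(B)}^{1-1/q}\|f\|_{L^1(B)}^{1/q}$ on each unit ball $B=B(x,1)$, combined with the two endpoint bounds and the inequality $(t^{-\N/2}+1)^{1-1/q}\le t^{-\frac\N2(1-\frac1q)}+1$, gives the stated $L^q_U$ estimate for all $1\le q\le\infty$. The main obstacle, and where care is needed, is keeping every constant uniform in $t$ across both regimes: the factor $t^{-\N/2}$ that blows up as $t\to0$ must cancel the $t^{\N/2}$ growth of $\Sigma(t)$ as $t\to\infty$, and the near/far splitting in the $L^1_U$ estimate is precisely what reveals that this cancellation leaves a genuinely $t$-independent bound; the covering, Gaussian decay, and interpolation are otherwise routine.
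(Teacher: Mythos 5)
Your argument reaches all the stated conclusions, but for part (i) it takes a genuinely different route from the paper's; part (ii) is the same argument as the paper's (the kernel is bounded below on unit balls, so $\int_{B(x,1)}\d u_0\le(4\pi t_0)^{\N/2}\e^{1/4t_0}\|u(t_0)\|_{L^\infty(\R^\N)}$ uniformly in $x$). For part (i) the paper decomposes the \emph{initial data}, for each cube $Q_i$ of a fixed unit lattice, into a near part $u_0\Chi_{Q_i^{\rm near}}$ and a far part $u_0\Chi_{Q_i^{\rm far}}$: the near part is a measure of bounded total variation, so the smoothing estimate $\|S(t)\mu\|_{L^{q}(\R^\N)}\le(4\pi t)^{-\frac{\N}{2}(1-\frac{1}{q})}\|\mu\|_{\BTV}$ of Lemma \ref{lem:heat_total_variation} produces the factor $t^{-\frac{\N}{2}(1-\frac1q)}$ for every $q$ at once, while the far part is bounded in $L^\infty(Q_i)$ by $c(t)\|u_0\|_{\mathcal{M}_U(\R^\N)}$ with $c$ bounded and $c(0)=0$. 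You instead prove only the two endpoints --- the $q=\infty$ bound by tiling and summing Gaussians, and a $t$-uniform $L^1_U$ bound by a near/far split of the \emph{integration domain} --- and then recover $1<q<\infty$ by interpolation on unit balls together with $(t^{-\N/2}+1)^{1-1/q}\le t^{-\frac{\N}{2}(1-\frac1q)}+1$. This avoids invoking Lemma \ref{lem:heat_total_variation} entirely, at the price of an extra interpolation step; your identification of $\M_0$ membership (finiteness of $u(0,t,|u_0|)$ for all $t$, i.e.\ $u_0\in\M_{1/4t}$ for all $t$) is also sound.

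One step needs tightening, and it is exactly the crux you flag at the end. In the $L^1_U$ estimate the far-field contribution is \emph{not} controlled by quoting $\Sigma(t)\le C_\N(1+t^{\N/2})$: dividing that bound by $(4\pi t)^{\N/2}$ only yields $C_\N(t^{-\N/2}+1)$, which blows up as $t\to0$, and no cancellation between the regimes $t\to0$ and $t\to\infty$ can occur since they are disjoint. What actually saves the estimate --- and what the paper encodes in the property $c(0)=0$ --- is that over the far cubes every distance satisfies $r_k\ge1$, so the truncated sum obeys the better bound
\[
\sum_{\mathrm{far}}\e^{-r_k^2/4t}\le C_\N\int_1^\infty r^{\N-1}\e^{-r^2/4t}\,\d r
= C_\N\,(4t)^{\N/2}\int_{1/\sqrt{4t}}^\infty s^{\N-1}\e^{-s^2}\,\d s ,
\]
where the last integral is bounded in $t$ and tends to $0$ as $t\to0$. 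Hence $t^{-\N/2}\sum_{\mathrm{far}}\e^{-r_k^2/4t}$ is bounded uniformly for $t>0$ (and vanishes as $t\to0$), which is what makes the $L^1_U$ bound genuinely $t$-independent. Replace the appeal to the full-lattice bound on $\Sigma(t)$ by this truncated-sum estimate and your proof is complete.
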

\begin{proof}

\noindent (i) Note first that  from
(\ref{eq:solution_heat_up_t=0}) we have $|S(t) u_{0}| \leq S(t)
|u_{0}|$ and, by definition, that $u_{0} \in \mathcal{M}_{U}(\R^\N)$ iff
$|u_{0}| \in \mathcal{M}_{U}(\R^\N)$. Hence, using Proposition
\ref{prop:global_existence_L10},  it is enough to prove the result for non-negative $u_{0}$.

  Let us consider a cube decomposition of $\R^{\N}$ as follows.  For
any index $i\in \Z^{\N}$, denote by $Q_{i}$ the open  cube in $\R^{\N}$ of
center $i$ with all edges of length 1 and parallel to the axes. Then
 $Q_i\cap Q_j=\emptyset$ for $i\neq j$ and $\R^{\N}=\cup_{i\in \Z^{\N}}
 \overline{Q_i}$. For a given $i\in \Z^{\N}$ let us denote by $N(i)$ the
 set of indexes near $i$, that is, $j\in N(i)$ if and only if
 $\overline{Q_i} \cap \overline{Q_j} \ne \emptyset$. Obviously
\be{dij}
d_{ij}:= \inf\{{\rm dist}(x,y),\, x\in Q_i,y\in Q_j\}
\ee
satisfies $d_{ij} =0$, if $j\in N(i)$, $d_{ij} \geq 1$, if $j \not
\in N(i)$, and as a matter of fact it is not difficult to see that
$d_{ij} \geq \|i-j\|_{\infty} - 1$. Let us denote by
$Q_i^{\rm near}=\cup_{j\in N(i)}Q_i$ and $Q_i^{\rm far}=\R^{\N}\setminus
\overline{Q_i^{\rm near}}$.

Assume that $u_{0} \in \mathcal{M}_U(\R^{\N})$ and, for a fixed $i$,
decompose
\begin{displaymath}
  u_{0} = u_{0} \Chi_{Q_i^{\rm near}}  + u_{0} \Chi_{Q_i^{\rm far}};
\end{displaymath}
by applying the linear semigroup $S(t)$ to each term in this equality we obtain the decomposition
$$
u(t)   =  u_i^{\rm near}(t) + u_i^{\rm far}(t) .
$$

The result  will follow from the following estimates of the two terms of the
decomposition. First,
\begin{equation}\label{basicLp-Lq}
  \| u_i^{\rm near}(t)\|_{L^q(Q_i)} \leq (4\pi
t)^{-\frac{\N}{2}(1-\frac{1}{q})} \|u_{0}\|_{\mathcal{M}(Q_i^{\rm near})}
\quad t > 0 ,
\end{equation}
for $1\leq q \leq \infty$ 
and, second,
\begin{equation}\label{basicL1-Linfty}
\| u_i^{\rm far}(t) \|_{L^\infty(Q_i)} \leq
c(t)  \|u_{0}\|_{\mathcal{M}_U(Q_i^{\rm far})},\quad t \geq 0.
\end{equation}
for some bounded monotonic function $c(t)$ such that $c(0)=0$ and
$0 \leq c(t) \leq C t^{-\N/2}\e^{-\alpha/t}$ as $t \to 0$, where  $C$ and
$\alpha>0$ depend only on $N$.

Then  since the constant for the embedding $L^\infty(Q_i)
\hookrightarrow L^q(Q_i)$  is $1$, independent of $q$ and $i$,
(\ref{basicL1-Linfty}),(\ref{basicLp-Lq}) imply
$$
\|u(t) \|_{L^q(Q_i)}\leq
((4\pi t)^{-\frac{\N}{2}(1-\frac{1}{q})} + c(t))
\|u_{0}\|_{\mathcal{M}_U(\R^{\N})}   \qquad i\in \Z^{\N} .
$$ 
Since the   $L^q_U(\R^N)$ norm can be bounded by a constant, only depending
on $N$, times the supremum of the $L^q(Q_i)$ norms, (i) follows.

Now observe that  (\ref{basicLp-Lq}) follows from ``standard''  estimates for the heat equation, since in
fact, $u_{0}\Chi_{Q_i^{\rm near}} $ is a measure of bounded total
variation and then for $t>0$,
\be{xtra}
\| u_i^{\rm near}(t)\|_{L^{q} (Q_i)} \leq \|S(t)(u_{0}
\Chi_{Q_i^{\rm near}})\|_{L^{q} (\R^{\N})} \leq
(4\pi t)^{-\frac{\N}{2} (1-\frac{1}{q})}
\|u_{0}\|_{\mathcal{M} (Q_i^{\rm near})}
\ee
since $\|u_{0} \Chi_{Q_i^{\rm near}} \|_{\mathcal{M}_{BTV} (\R^{\N})} =
\|u_{0}\|_{\mathcal{M} (Q_i^{\rm near})} $ see Lemma \ref{lem:heat_total_variation} below.

We now prove (\ref{basicL1-Linfty}). Observe that
$u_{0} \Chi_{Q_i^{\rm far}}  =\sum_{j\in \Z^{\N}\setminus N(i)}u_0^{j}$ where
$u_0^{j} = u_{0} \Chi_{Q_j}$; for each $j$ we have
$$
S(t)u_{0}^{j}(x) = (4\pi t)^{-\N/2} \int_{\R^{\N}}
\e^{-\frac{|x-y|^2}{4t}} \, \d u_{0}^{j}(y) = (4\pi
t)^{-\N/2}\int_{Q_j}\e^{-\frac{|x-y|^2}{4t}} \, \d u_{0}^{j}(y)
$$
which implies that for $j \not \in N(i)$
$$
\|S(t)u_{0}^{j}\|_{L^\infty(Q_i)}\leq (4\pi
t)^{-\N/2}\e^{-d_{ij}^2\over 4t} \|u_{0}^{j}\|_{\mathcal{M} (Q_j)}\leq  (4\pi
t)^{-\N/2}\e^{-d_{ij}^2\over 4t}\|u_{0}\|_{\mathcal{M}_U(Q_i^{\rm far})},
$$
where $d_{ij}$ is defined above in \eqref{dij}.  Hence,
\begin{align*}
\| u_i^{\rm far}(t)\|_{L^\infty(Q_i)} &\leq \sum_{j\in
\Z^{\N}\setminus N(i)} \|S(t)u_{0}^{j}\|_{L^\infty(Q_i)}
&\leq  (4\pi
t)^{-\N/2} \|u_{0}\|_{\mathcal{M}_U(Q_i^{\rm far})} \sum_{j\in \Z^{\N}\setminus
  N(i)}\e^{-d_{ij}^2\over 4t} .
\end{align*}
But, using that $\#\{j \in \Z, \ d_{ij}=k\} \leq C k^{\N-1}$, we obtain
$$
\sum_{j\in \Z^{\N}\setminus N(i)}\e^{-d_{ij}^2\over 4t}\leq
C\sum_{k=1}^\infty k^{\N-1} \e^{-k^2\over 4t}
$$
which has the same character as the integral 
\begin{displaymath}
   \int_{1}^{\infty} r^{\N-1}  \e^{- \frac{r^{2}}{4t}} \,  \d r  =
  (4t)^{\N/2} \int_{\frac{1}{\sqrt{4 t}}}^{\infty} s^{\N-1}  \e^{-s^{2}} \,  \d s  = t^{\N/2}c(t)
\end{displaymath}
with $c(t)$ as claimed in (\ref{basicL1-Linfty}).

\noindent (ii)    If for some $t_{0}>0$ we have $u(t_{0})\in L^{\infty}(\R^\N)$
then from (\ref{eq:solution_heat_up_t=0}) we get
for all $x\in \R^\N$ and any $R>0$
\begin{align*}
  \infty> M \geq u(x,t_{0})& \geq \frac{1}{(4\pi t_{0})^{\N/2}} \int_{B(x,R)} \e^{-\frac{|x-y|^2}{4t_{0}}}
\, \d u_0(y) \\
& \geq \frac{1}{(4\pi t_{0})^{\N/2}}  \inf_{z\in B(0,R)} \e^{-\frac{|z|^2}{4t_{0}}} \int_{B(x,R)}
\, \d u_0(y)
\end{align*}
that is
\begin{displaymath}
  0\leq \int_{B(x,R)} \,\d  u_0(y)  \leq M \e^{\frac{R^2}{4t_{0}}}  (4\pi
  t_{0})^{\N/2} , \quad x\in \R^\N
\end{displaymath}
i.e. $0\leq u_{0} \in \mathcal{M}_{U}(\R^\N)$.
From part (i) we  obtain $u(t)\in L^{\infty}(\R^\N)$ for all
$t>0$.
\end{proof}

Now we prove the result used above in \eqref{xtra}.
Note that  from (\ref{eq:space_Meps}),   $\mathcal{M}_{\BTV} (\R^\N)
\subset \mathcal{M}_{U} (\R^{\N})  \subset
\mathcal{M}_{0} (\R^\N)$.
The following lemma shows that $\M_\BTV$ is invariant under the heat equation, and gives bound on the rate of decay in $L^q$ of solutions when $u_0\in\M_\BTV$.

\begin{lemma}\label{lem:heat_total_variation}

For  $\mu \in \mathcal{M}_{\BTV} (\R^\N) $ the solution of the heat equation
given by (\ref{eq:solution_heat_up_t=0}) satisfies
\begin{displaymath}
  \|S(t)\mu \|_{\BTV} \leq   \|\mu \|_{\BTV}, \quad t>0,
\end{displaymath}
and for every $1\leq q \leq \infty$
\begin{displaymath}
  \|S(t)\mu \|_{L^{q}(\R^\N)} \leq  (4\pi t)^{-\frac{\N}{2}(1-\frac{1}{q})}
    \|\mu \|_{\BTV}, \quad t>0 .
\end{displaymath}
\end{lemma}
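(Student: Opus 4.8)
The plan is to prove the two endpoint cases $q=1$ and $q=\infty$ directly and then recover the intermediate exponents by a one-line interpolation. The conceptual point to keep in mind is that for $t>0$ the function $S(t)\mu$ is smooth, and once it is shown to be integrable it generates the absolutely continuous measure $S(t)\mu\,\d x$, whose total-variation norm is exactly $\|S(t)\mu\|_{L^1(\R^\N)}$; thus the first assertion $\|S(t)\mu\|_{\BTV}\le\|\mu\|_{\BTV}$ is precisely the bound $\|S(t)\mu\|_{L^1(\R^\N)}\le|\mu|(\R^\N)$, which is simultaneously the $q=1$ case of the second assertion.

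First I would establish this $L^1$ bound. Starting from the pointwise inequality $|S(t)\mu|\le S(t)|\mu|$ noted just after \eqref{eq:solution_heat_up_t=0}, I integrate in $x$ and apply Tonelli's theorem — legitimate since the integrand $K(x-y,t)$ is nonnegative, so no prior integrability is needed:
\[
\int_{\R^\N}|S(t)\mu(x)|\,\d x\le\int_{\R^\N}\Big(\int_{\R^\N}K(x-y,t)\,\d x\Big)\,\d|\mu|(y).
\]
Because $\int_{\R^\N}K(x-y,t)\,\d x=1$ for every $y$, the inner integral is $1$ and the right-hand side equals $|\mu|(\R^\N)=\|\mu\|_{\BTV}$. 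This yields both the membership $S(t)\mu\in L^1(\R^\N)$ and the first claimed inequality. Next I would treat $q=\infty$: pointwise $|S(t)\mu(x)|\le\int_{\R^\N}K(x-y,t)\,\d|\mu|(y)\le\|K(\cdot,t)\|_{L^\infty(\R^\N)}\,|\mu|(\R^\N)=(4\pi t)^{-\N/2}\|\mu\|_{\BTV}$, which is the stated estimate at $q=\infty$.

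Finally, for $1<q<\infty$ I would interpolate the two endpoint bounds via the elementary inequality $\|f\|_{L^q(\R^\N)}^q=\int_{\R^\N}|f|^{q-1}|f|\le\|f\|_{L^\infty(\R^\N)}^{q-1}\|f\|_{L^1(\R^\N)}$, i.e.\ $\|f\|_{L^q(\R^\N)}\le\|f\|_{L^1(\R^\N)}^{1/q}\|f\|_{L^\infty(\R^\N)}^{1-1/q}$, applied to $f=S(t)\mu$. Combining with the $L^1$ and $L^\infty$ bounds gives
\[
\|S(t)\mu\|_{L^q(\R^\N)}\le\|\mu\|_{\BTV}^{1/q}\big((4\pi t)^{-\N/2}\|\mu\|_{\BTV}\big)^{1-1/q}=(4\pi t)^{-\frac{\N}{2}(1-\frac{1}{q})}\|\mu\|_{\BTV},
\]
since the powers of $\|\mu\|_{\BTV}$ sum to $1$, which is exactly the required estimate.

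There is essentially no substantial obstacle here: the argument is soft, relying only on $|S(t)\mu|\le S(t)|\mu|$, the normalization and supremum of the heat kernel, and $L^p$ interpolation. The only care needed is the (standard) identification of the total-variation norm of the measure $S(t)\mu\,\d x$ with $\|S(t)\mu\|_{L^1(\R^\N)}$, and the deliberate use of Tonelli rather than Fubini so that finiteness is an output rather than a hypothesis.
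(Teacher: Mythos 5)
Your proof is correct, but it follows a genuinely different route from the paper's. The paper argues by duality: it invokes the symmetry formula (\ref{Fubbini_4-S(t)}) from Lemma \ref{lem:preparing_4_Fubini} to write $\int_{\R^\N} S(t)\mu\,\varphi = \int_{\R^\N} S(t)\varphi\,\d\mu$ for $\varphi\in C_c(\R^\N)$, bounds $|\int S(t)\mu\,\varphi|\le \|S(t)\varphi\|_{L^\infty(\R^\N)}\|\mu\|_{\BTV}$, and then applies the classical smoothing estimates (\ref{eq:heat_LpLq_estimates}) \emph{to the test function} $\varphi$ to get $|\int S(t)\mu\,\varphi|\le (4\pi t)^{-\N/2q'}\|\varphi\|_{L^{q'}(\R^\N)}\|\mu\|_{\BTV}$, from which all the claims follow by the duality characterization of the $L^q$ and $\BTV$ norms. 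You instead work directly on the kernel side: Tonelli plus the normalization $\int K(\cdot,t)=1$ gives the $L^1$ (hence $\BTV$) endpoint, the pointwise bound $\|K(\cdot,t)\|_{L^\infty}=(4\pi t)^{-\N/2}$ gives the $L^\infty$ endpoint, and the elementary interpolation $\|f\|_{L^q}\le\|f\|_{L^1}^{1/q}\|f\|_{L^\infty}^{1-1/q}$ fills in the intermediate exponents. Your argument is more self-contained and elementary (no appeal to Lemma \ref{lem:preparing_4_Fubini}, to the smoothing estimates for test functions, or to $L^q$--$L^{q'}$ duality), and it makes integrability an output of Tonelli rather than something to be checked; the paper's version, on the other hand, recycles machinery already established earlier in the text and stays entirely within its weak (tested-against-$C_c$) framework, which is why, for $q=1$, it needs the extra closing remark that $S(t)\mu\in L^1_\loc(\R^\N)\cap\M_{\BTV}(\R^\N)$ implies $S(t)\mu\in L^1(\R^\N)$ --- a point you handle instead via the isometric identification of $L^1(\R^\N)$ inside $\M_{\BTV}(\R^\N)$, which the paper itself records in Section \ref{sec:radon-measures}. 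Both proofs are complete; yours trades reuse of earlier lemmas for directness.
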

\begin{proof}
  Observe that since for every
$\varphi \in C_{c}(\R^\N)$ and $0\leq t< \infty$, $u(t) =S(t)\mu$
satisfies (\ref{Fubbini_4-S(t)}) that is,
\begin{displaymath}
  \int_{\R^\N} u(t) \varphi =   \int_{\R^\N}  S(t)\varphi  \, \d \mu
\end{displaymath}
then
\begin{displaymath}
\left|  \int_{\R^\N} u(t) \varphi \right| \leq
\|S(t)\varphi\|_{L^{\infty}(\R^\N)}   \|\mu\|_{\BTV}  .
\end{displaymath}
Therefore the estimates (\ref{eq:heat_LpLq_estimates}) give, for every
$1\leq q \leq \infty$,
\begin{displaymath}
\left|  \int_{\R^\N} u(t) \varphi \right| \leq (4\pi
t)^{-\frac{\N}{2q'} } \|\varphi\|_{L^{q'}(\R^\N)}   \|\mu\|_{\BTV}
\end{displaymath}
and the claims follow. Note that in particular, for $q=1$ since $u(t)
\in L^{1}_{\loc}(\R^\N) \cap \mathcal{M}_{\BTV} (\R^\N)$ then $u(t)\in
L^{1}(\R^\N)$.
\end{proof}

\subsection{Finite-time blowup for non-negative initial data}
\label{sec:finite-time-blowup}

Now we turn to non-negative solutions that
may not exist for all time, that is, according to Proposition
\ref{prop:global_existence_L10},  $0\leq u_{0}\notin
\mathcal{M}_{0}(\R^\N)$. Lemma \ref{whyL1e} shows that the maximal
existence time for the solution arising from the non-negative initial condition
$0\leq u_0\in \mathcal{M}_\loc(\R^\N)$ will be determined by its `optimal index'
\begin{equation}\label{eps0}
\eps_{0}(\mu)  := \inf\{\eps: \  \mu \in
\mathcal{M}_{\eps}(\R^\N) \} = \sup\{\eps: \  \mu  \notin
\mathcal{M}_{\eps}(\R^\N) \} \leq \infty.
\end{equation}

The simplest example is to take $A>0$ and consider
$u_0(x)=\e^{A|x|^2}$; then $u_0\in L^1_\eps(\R^\N)$ if and only if $\eps>A$,
so in this case $\eps_0(u_0)=A$ but $u_0\notin L^1_A(\R^\N)$. If we set
$T=1/4A$ then the integral in (\ref{eq:solution_heat_up_t=0}) can be
computed explicitly and one gets
\begin{equation} \label{eq:blowing_up_quadratic_exponential}
  u(x,t) = \frac{T^{\N/2}}{ (T-t)^{\N/2}}   \e^{\frac{|x|^2}{4(T-t)}},
\end{equation}
which satisfies the heat equation  for $t\in (0,T)$, has $u(x,0)=
u_{0}(x)$, and blows up at every point $x\in\R^\N$ as $t\to T$. 
At the other extreme is an initial condition like
$$
u_0(x)=\e^{A|x|^2-\gamma|x|^\alpha}
$$
for some $\gamma>0$ and $1<\alpha<2$, which we treat as Example \ref{howodd}, below. In this case $\lim_{t\to 1/4A}u(x,t)$ exists for every $x\in\R^\N$, but the solution cannot be extended past $t=T$.

Below  we analyse the behaviour for a general non-negative initial
condition $u_0$ that is not an element of $\mathcal{M}_{0}(\R^\N)$. While the
time span of the solution does not depend specifically on any fine
properties of the initial data, but only its asymptotic growth as
$|x| \to \infty$ (in terms of its optimal index), the existence or
otherwise of a finite limit as $t\to T$ is more delicate. We will see
below that at the maximal existence time a number of different
behaviours are possible: from complete blowup, as in the example  (\ref{eq:blowing_up_quadratic_exponential})
above,  to the existence of a finite limit at all points in space. We will show that by `tuning' the
initial data it is possible to obtain solutions with  a finite limit
only at any chosen
convex subset of $\R^\N$.
These results, in turn, will depend on the integrability at the optimal index of the
translate of the initial data.

In the case of pointwise-defined functions, any translation $\tau_{y} f(x) :=
f(x-y)$ has the same optimal index as $f$, since whenever $f\in
L^1_\eps(\R^\N)$ we have $\tau_yf\in L^1_\delta (\R^\N)$ for any
$\delta>\eps$:
\begin{displaymath}
  \int_{\R^\N} \e^{-\delta |x|^{2}} |f(x-y)|\,  \d x =   \int_{\R^\N}
  \e^{-\delta |z+y|^{2}} |f(z)|\,  \d z\le  \e^{\delta  (\frac{1}{\alpha}-1) |y|^{2} }  \int_{\R^\N}
  \e^{-\delta (1-\alpha) |z|^{2}} |f(z)|\,  \d z<\infty
\end{displaymath}
for $\delta (1-\alpha) \geq  \eps$, using \eqref{sqlower}. However,
whether or not $\tau_{y}  f \in L^{1}_{\eps_{0}(f)}(\R^\N)$ depends
strongly on the decay at infinity of ``lower order terms'' of $f$ as
the examples below will show.

For the case of measures, observe that we can define translations of measures via the formula that would hold for a locally integrable $f$ and
$\varphi \in C_{c}(\R^\N)$, namely
\begin{displaymath}
  \int_{\R^\N} \tau_{y} f(x) \varphi(x) \,\d x =  \int_{\R^\N} f(z)
  \varphi (z+y) \, \d z =  \int_{\R^\N}  f (z)\tau_{-y} \varphi(z)\,
  \d z .
\end{displaymath}
That is, for $y\in \R^\N$ and $\mu \in \mathcal{M}_\loc (\R^\N)$ we
define $\tau_y\mu$ by setting
\begin{displaymath}
  \int_{\R^{\N}} \varphi (x)\, \d \tau_{y} \mu(x) := \int_{\R^\N} \tau_{-y} \varphi(z) \,
  \d \mu(z)\quad\mbox{for every}\quad \varphi \in C_{c}(\R^\N).
\end{displaymath}
Hence $\tau_{y} \mu \in \mathcal{M}_\loc (\R^\N)$ and
is a  positive measure whenever $\mu$ is. It also follows that if $\mu =
\mu^{+}-\mu^{-}$ then
\begin{displaymath}
  \tau_{y} \mu =   \tau_{y}  \mu^{+} - \tau_{y} \mu^{-} \qquad\mbox{and}\qquad |\tau_{y} \mu| = \tau_{y}
|\mu| .
\end{displaymath}

\begin{lemma}\label{lem:translation_of_measures}

For $\mu \in \mathcal{M}_\eps(\R^\N)$ and $y\in \R^{\N}$, we have $\tau_y\mu \in \mathcal{M}_\delta (\R^\N)$ for any
$\delta>\eps$ and

\begin{displaymath}
\int_{\R^{\N}}
  \e^{-\delta |x|^{2}} \, \d |\tau_{y} \mu(x) |=   \int_{\R^{\N}} \e^{-\delta |x+y|^{2}} \, \d |\mu(x)| .
\end{displaymath}
In particular,  $\tau_{y} \mu $ has the same optimal index as $\mu$.

\end{lemma}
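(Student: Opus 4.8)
The plan is to establish the displayed integral identity first, since both the membership $\tau_y\mu\in\mathcal{M}_\delta(\R^\N)$ and the statement about the optimal index will follow from it combined with the elementary estimate already used in the function case. The natural starting point is the relation $|\tau_y\mu|=\tau_y|\mu|$ recorded immediately above the lemma, which reduces everything to the nonnegative Radon measure $|\mu|$ and its translate. Applying the definition of $\tau_y$ to $|\mu|$ gives, for every $\varphi\in C_c(\R^\N)$, the identity $\int_{\R^\N}\varphi\,\d\tau_y|\mu|=\int_{\R^\N}\tau_{-y}\varphi\,\d|\mu|$.

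The first, and only genuinely delicate, step is to upgrade this identity from compactly supported test functions to the Gaussian weight $\e^{-\delta|\cdot|^2}$, which is not compactly supported. I would pick cut-offs $\eta_n\in C_c(\R^\N)$ with $0\le\eta_n\le1$ and $\eta_n\uparrow 1$ pointwise, and set $\varphi_n(x)=\e^{-\delta|x|^2}\eta_n(x)\in C_c(\R^\N)$, so that $0\le\varphi_n\uparrow\e^{-\delta|\cdot|^2}$ and correspondingly $\tau_{-y}\varphi_n(z)=\varphi_n(z+y)\uparrow\e^{-\delta|z+y|^2}$. Applying the $C_c$ identity to each $\varphi_n$ and letting $n\to\infty$ by monotone convergence on both sides (legitimate because $|\mu|$ and $\tau_y|\mu|=|\tau_y\mu|$ are genuine nonnegative Radon measures) yields
$$\int_{\R^\N}\e^{-\delta|x|^2}\,\d|\tau_y\mu(x)|=\int_{\R^\N}\e^{-\delta|x+y|^2}\,\d|\mu(x)|,$$
which is exactly the claimed identity.

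It then remains to show the right-hand side is finite for every $\delta>\eps$. Here I would reuse \eqref{sqlower}: applied with its roles suitably permuted it gives, for any $0<\alpha<1$, the bound $|z+y|^2\ge(1-\alpha)|z|^2+(1-\tfrac1\alpha)|y|^2$, whence $\e^{-\delta|z+y|^2}\le\e^{\delta(\frac1\alpha-1)|y|^2}\,\e^{-\delta(1-\alpha)|z|^2}$ and
$$\int_{\R^\N}\e^{-\delta|z+y|^2}\,\d|\mu(z)|\le\e^{\delta(\frac1\alpha-1)|y|^2}\int_{\R^\N}\e^{-\delta(1-\alpha)|z|^2}\,\d|\mu(z)|.$$
Since $\delta>\eps$ one may fix $\alpha\in(0,1)$ with $\delta(1-\alpha)\ge\eps$; because the spaces $\mathcal{M}_\eps(\R^\N)$ increase in the index and $\mu\in\mathcal{M}_\eps(\R^\N)$, the last integral is finite. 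This proves both the identity and the membership $\tau_y\mu\in\mathcal{M}_\delta(\R^\N)$ for all $\delta>\eps$.

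Finally, for the equality of optimal indices I would argue by an invertibility symmetry. What has just been shown says precisely that $\eps_0(\tau_y\mu)\le\eps$ whenever $\mu\in\mathcal{M}_\eps(\R^\N)$; taking the infimum over all admissible $\eps$ gives $\eps_0(\tau_y\mu)\le\eps_0(\mu)$. Since translation is invertible with $\tau_{-y}\tau_y=\mathrm{id}$, applying the same inequality to $\tau_y\mu$ with translation vector $-y$ yields $\eps_0(\mu)=\eps_0(\tau_{-y}(\tau_y\mu))\le\eps_0(\tau_y\mu)$, and hence $\eps_0(\tau_y\mu)=\eps_0(\mu)$. The only point requiring real care throughout is the monotone-convergence extension in the second paragraph; everything else is a direct reuse of estimates already established in the paper.
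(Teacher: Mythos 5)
Your proof is correct and follows essentially the same route as the paper: both reduce to $|\tau_y\mu|=\tau_y|\mu|$, test the defining identity against compactly supported truncations of the Gaussian weight, pass to the limit by monotone convergence, and use \eqref{sqlower} with $\delta(1-\alpha)\geq\eps$ to get finiteness. Your explicit symmetry argument for the equality of optimal indices ($\tau_{-y}\tau_y=\mathrm{id}$) is a nice touch that the paper leaves implicit, but it is the same underlying reasoning.
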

\begin{proof}
  Take $\phi_{k} \in C_{c}(\R^{\N})$ such that $0\leq  \phi_{k} \leq 1$
  and $\phi_{k} \to 1$ as $k\to \infty$ monotonically in compact sets of
  $\R^{\N}$. Then
  \begin{displaymath}
    \int_{\R^{\N}} \phi_{k} (x)
  \e^{-\delta |x|^{2}} \, \d |\tau_{y} \mu(x) | =     \int_{\R^{\N}} \phi_{k} (x)
  \e^{-\delta |x|^{2}} \, \d \tau_{y} |\mu(x) | =     \int_{\R^{\N}} \phi_{k} (x+y)
  \e^{-\delta |x+y|^{2}} \, \d |\mu(x) | .
  \end{displaymath}
Using (\ref{sqlower}), the right-hand side above is bounded by
\begin{displaymath}
    \e^{\delta  (\frac{1}{\alpha}-1) |y|^{2} }  \int_{\R^\N}
  \e^{-\delta (1-\alpha) |x|^{2}} \,  \d |\mu(x)| < \infty
\end{displaymath}
  for $\delta (1-\alpha) \geq  \eps$. Then Fatou's Lemma gives
$\D \int_{\R^{\N}}
  \e^{-\delta |x|^{2}} \, \d |\tau_{y} \mu(x) |  < \infty$.
Now the Monotone Convergence Theorem gives the result.
\end{proof}

Now we can prove the following result on the pointwise behaviour, as
$t\to T$ of the solution of the heat equation
(\ref{eq:solution_heat_up_t=0}) with initial data $0\leq u_{0}\notin
\mathcal{M}_{0}(\R^\N)$.

\begin{theorem} \label{thr:blow-up}
Assume that $0\leq u_{0} \in \mathcal{M}_\loc(\R^\N)$ and that the optimal
index $\eps_0=\eps_0(u_{0})$, see \eqref{eps0},  satisfies  $0<
\eps_{0} <\infty$. Then the solution $u$
of the heat equation given by \eqref{eq:solution_heat_up_t=0} is
not defined (at any point $x\in \R^\N$) beyond  $T:=1/4\eps_0$. Furthermore
as $t\to
T$
\begin{displaymath}
u(x,t) \to
\begin{cases}
u(x,T) & \mbox{if }\tau_{-x} u_{0} \in \mathcal{M}_{\eps_{0}}(\R^\N)
\cr
\infty  &  \mbox{if }\tau_{-x} u_{0} \notin
\mathcal{M}_{\eps_{0}}(\R^\N).
\end{cases}
\end{displaymath}

\end{theorem}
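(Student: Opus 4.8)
The plan is to reduce everything to the monotone behaviour in $t$ of the heat kernel integrand, using the optimal index to pin down the existence time. For the first assertion---that the solution cannot be continued past $T$---I would argue by contradiction using the converse part of Lemma~\ref{whyL1e}. If $u(x,t)<\infty$ for some $x\in\R^\N$ and some $t>T$, then that lemma forces $u_{0}\in\mathcal{M}_\eps(\R^\N)$ for every $\eps>1/4t$; since $t>T=1/4\eps_0$ we have $1/4t<\eps_0$, so one may pick $\eps$ with $1/4t<\eps<\eps_0$ and $u_{0}\in\mathcal{M}_\eps(\R^\N)$, contradicting the definition~\eqref{eps0} of the optimal index. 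Hence $u(x,t)=\infty$ for all $x$ once $t>T$. (That the solution is genuinely finite for every $0<t<T$ is already guaranteed by the direct part of Lemma~\ref{whyL1e}: for such $t$ one has $\eps_0<1/4t$, so choosing $\eps\in(\eps_0,1/4t)$ gives $u_0\in\mathcal{M}_\eps(\R^\N)$ with $t<T(\eps)$.)

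For the behaviour as $t\to T^-$ the key observation is that for fixed $x,y$ the map $t\mapsto\e^{-|x-y|^2/4t}$ is nondecreasing on $(0,\infty)$, so---because $u_{0}\ge0$---the integral $\int_{\R^\N}\e^{-|x-y|^2/4t}\,\d u_{0}(y)$ increases monotonically as $t\uparrow T$. By the Monotone Convergence Theorem its limit is $\int_{\R^\N}\e^{-|x-y|^2/4T}\,\d u_{0}(y)=\int_{\R^\N}\e^{-\eps_0|x-y|^2}\,\d u_{0}(y)\in[0,\infty]$, where I have used $4T=1/\eps_0$. Since the prefactor $(4\pi t)^{-\N/2}$ converges to the finite positive constant $(4\pi T)^{-\N/2}$, it follows that
\[
\lim_{t\to T^-}u(x,t)=\frac{1}{(4\pi T)^{\N/2}}\int_{\R^\N}\e^{-\eps_0|x-y|^2}\,\d u_{0}(y)=:u(x,T)\in[0,\infty],
\]
the limit being finite or infinite exactly according to whether this integral converges.

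It remains only to identify finiteness of this integral with the stated condition on the translate. Testing the translated measure against the weight $\varphi(z)=\e^{-\eps_0|z|^2}$, via a monotone $C_c$ approximation $\phi_k\uparrow1$ exactly as in the proof of Lemma~\ref{lem:translation_of_measures} (legitimate since $u_{0}\ge0$, so $|\tau_{-x}u_{0}|=\tau_{-x}u_{0}$), gives the identity in $[0,\infty]$
\[
\int_{\R^\N}\e^{-\eps_0|z|^2}\,\d|\tau_{-x}u_{0}(z)|=\int_{\R^\N}\e^{-\eps_0|y-x|^2}\,\d u_{0}(y).
\]
Thus $\tau_{-x}u_{0}\in\mathcal{M}_{\eps_0}(\R^\N)$ is precisely the condition $u(x,T)<\infty$, and combining this with the monotone limit above yields the two cases of the theorem.

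I expect the only real subtlety to be bookkeeping with the translation convention and justifying the pairing of $\tau_{-x}u_{0}$ against the non-compactly-supported weight $\e^{-\eps_0|z|^2}$ at the borderline index $\eps_0$; both are handled by the same monotone-approximation argument already used for Lemma~\ref{lem:translation_of_measures}, noting that the identity holds as an equality in $[0,\infty]$ and so needs no a priori finiteness. Everything else is a direct application of monotone convergence, so there is no genuine analytic obstacle once the monotonicity of $t\mapsto\e^{-|x-y|^2/4t}$ is recorded.
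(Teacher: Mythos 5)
Your proof is correct and takes essentially the same route as the paper: non-continuability past $T$ follows from the converse part of Lemma~\ref{whyL1e} exactly as in the paper, and the dichotomy as $t\to T^-$ rests on monotone convergence together with the translation identity underlying Lemma~\ref{lem:translation_of_measures}. The only cosmetic difference is that you run a single monotone-convergence argument in $[0,\infty]$ on the untranslated integral and identify the limit with the translated-measure integral afterwards, whereas the paper translates first and then treats the two cases separately (Fatou for divergence, monotone convergence for the finite limit); this is a streamlining, not a different method.
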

\begin{proof}
It follows from Lemma \ref{whyL1e} that if $u(x,t)$ is finite for
some $x\in \R^\N$ and $t> T$ then $u_{0} \in \mathcal{M}_{\eps}(\R^\N)$ for some $\eps$ with $\eps_{0} >
\eps>\frac{1}{4t}$, which is impossible.

To analyse the limiting behaviour as $t\to T$, using Lemma
\ref{lem:translation_of_measures} we first write, for
$t<T$, 
\begin{displaymath}
  u(x,t) =  \frac{1}{(4\pi t)^{\N/2}} \int_{\R^\N} \e^{-\frac{|x-y|^2}{4t}}
\, \d u_0(y)   = \frac{1}{(4\pi t)^{\N/2}} \int_{\R^\N} \e^{-\frac{|z|^2}{4t}}
\, \d \tau_{-x} u_0(z) .
\end{displaymath}

Now, if $\tau_{-x}u_{0} \notin \mathcal{M}_{\eps_{0}}(\R^\N)$ then  by Fatou's Lemma
\begin{displaymath}
  u(x,t) = \frac{1}{(4\pi t)^{\N/2}} \int_{\R^\N} \e^{-\frac{|y|^2}{4t}}
\,\d \tau_{-x}u_0(y)  \to \infty , \quad t\to T.
\end{displaymath}
On the other hand, if $\tau_{-x}u_{0} \in \mathcal{M}_{\eps_{0}}(\R^\N)$ then $\e^{-\frac{|y|^2}{4t}}
\leq \e^{-\eps_{0}|y|^2} $ and using the Monotone Convergence Theorem
it follows that as $t\to T$,
\begin{displaymath}
  u(x,t) = \frac{1}{(4\pi t)^{\N/2}} \int_{\R^\N} \e^{-\frac{|y|^2}{4t}}
\,\d \tau_{-x}u_0(y)  \to  \frac{1}{(4\pi T)^{\N/2}} \int_{\R^\N}
  \e^{-\eps_{0} |y|^2}
\,\d \tau_{-x}u_0(y) <\infty .\qedhere
\end{displaymath}
\end{proof}

For a given initial data  $0\leq u_{0} \in \mathcal{M}_\loc(\R^\N)$
with optimal index $0<\eps_0=\eps_0(u_{0}) <\infty$ we now analyse the
`regular set'   of points $x\in \R^{\N}$ such that the solution of the heat
equation has a finite limit as  $t\to T:=1/4\eps_0$ as in Theorem
\ref{thr:blow-up}. For short we define $A:=\eps_0(u_{0})>0$. Then observe
that if no translation of $u_{0}$ satisfies $\tau_{-x} u_{0} \in
\mathcal{M}_{\eps_{0}}(\R^\N)$ then  the
solution $u(x,t)$ of the heat equation diverges to infinity  at every point in $\R^\N$
as $t \to T=1/4A$. Otherwise, assume that $u_{0} \in
\mathcal{M}_{\eps_{0}}(\R^\N)$; then
\begin{equation} \label{eq:choosing_u0}
u_0(x)=\e^{A|x|^2} v(x),  \quad x\in \R^{\N}
\end{equation}
where  $0\le v\in \mathcal{M}_{\BTV}(\R^\N)$. If, on the contrary  $u_{0} \notin
\mathcal{M}_{\eps_{0}}(\R^\N)$ then for some $x_{0} \in \R^{\N}$ we
have $v_{0}:= \tau_{-x_{0}} u_{0} \in \mathcal{M}_{\eps_{0}}(\R^\N)$ and
then $v_{0}$ is as in (\ref{eq:choosing_u0}), while from Lemma
\ref{lem:translation_of_measures} we obtain
\begin{displaymath}
  u(x,t,u_{0}) = u(x-x_{0},t, v_{0}), \quad x\in \R^{\N}
\end{displaymath}
so it suffices to study the `regular set' of an initial data as in
(\ref{eq:choosing_u0}) with $0\le v\in \mathcal{M}_{\BTV}(\R^\N)$.
For simplicity in the exposition we will restrict to the case $0\le
v\in L^{1}(\R^\N)$.
In such a case we have the following
result that shows that the `regular set' of $x\in\R^\N$ at
which $u(x,t)$ has a finite limit as $t\to T$ must be a convex
set. For a converse result see Proposition \ref{anyconvex} below.

\begin{lemma} \label{lem:blowup-set}
Assume $u_{0}$ is as in \eqref{eq:choosing_u0} with $0\leq v \in
L^{1}_{loc}(\R^{\N})$, so that $\eps_0(u_{0})=A$. Then
\begin{enumerate}
\item[\rm (i)]
  $\tau_{-x} u_{0} \in L^{1}_{\eps_{0}}(\R^\N)$ iff
$I_v(x):= \D  \int_{\R^{\N}} \e^{2A \<x,z\>} v(z) \, \d z < \infty$.

\item[\rm (ii)] If moreover $0\le
v\in L^{1}(\R^\N)$ then the   set of $x\in \R^{\N}$
such that $I_v(x) < \infty$ is a convex set that contains  $x=0$.

\end{enumerate}
\end{lemma}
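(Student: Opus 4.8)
The plan is to treat (i) as a direct computation and (ii) as an instance of the log-convexity of a Laplace-type transform of a nonnegative function.

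For (i), I would simply unwind the definition of the translate. Since here $u_{0}$ is the pointwise function $u_{0}(y)=\e^{A|y|^2}v(y)$, we have $\tau_{-x}u_{0}(w)=u_{0}(w+x)$, so (recalling that $\eps_{0}=A$) the condition $\tau_{-x}u_{0}\in L^{1}_{\eps_{0}}(\R^\N)$ reads $\int_{\R^\N}\e^{-A|w|^2}u_{0}(w+x)\,\d w<\infty$. Substituting $y=w+x$ and completing the square, $-A|y-x|^2+A|y|^2=2A\<x,y\>-A|x|^2$, so this integral equals $\e^{-A|x|^2}I_{v}(x)$. Because $\e^{-A|x|^2}$ is a fixed positive constant, finiteness of the integral is equivalent to $I_{v}(x)<\infty$, which is exactly (i). This step is mechanical; the only care needed is in the change of variables and the completing-the-square identity.

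For (ii), I would first note that $x=0$ belongs to the set, since $I_{v}(0)=\int_{\R^\N}v=\|v\|_{L^{1}(\R^\N)}<\infty$ by the extra hypothesis $v\in L^{1}(\R^\N)$. For convexity, take $x_{1},x_{2}$ with $I_{v}(x_{1}),I_{v}(x_{2})<\infty$, fix $\lambda\in(0,1)$ (the endpoints being trivial), and set $x=\lambda x_{1}+(1-\lambda)x_{2}$. Using the linearity of $z\mapsto\<x,z\>$ together with $v=v^{\lambda}v^{1-\lambda}$ (legitimate because $v\geq0$), factor the integrand as $\e^{2A\<x,z\>}v(z)=\big(\e^{2A\<x_{1},z\>}v(z)\big)^{\lambda}\big(\e^{2A\<x_{2},z\>}v(z)\big)^{1-\lambda}$. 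H\"older's inequality with conjugate exponents $1/\lambda$ and $1/(1-\lambda)$ then yields $I_{v}(x)\leq I_{v}(x_{1})^{\lambda}I_{v}(x_{2})^{1-\lambda}<\infty$, which is precisely the assertion that $\{I_{v}<\infty\}$ is convex.

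Neither part is a genuine obstacle. The substantive content of (ii) is merely that the domain of finiteness of the two-sided Laplace transform of a nonnegative function is convex; the only points to get right are the choice of conjugate H\"older exponents and the sign condition $v\geq0$ that legitimises the factorisation $v=v^{\lambda}v^{1-\lambda}$. Since $v\geq0$, the quantity $I_{v}(x)$ is in any case a well-defined element of $[0,\infty]$, so no measurability or sign ambiguities arise.
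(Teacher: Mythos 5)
Your proof is correct. Part (i) is essentially identical to the paper's: the same completing-the-square identity $-A|y-x|^2+A|y|^2=2A\<x,y\>-A|x|^2$ showing that $\int_{\R^\N}\e^{-A|y|^2}\tau_{-x}u_0(y)\,\d y=\e^{-A|x|^2}I_v(x)$. Part (ii), however, takes a genuinely different route. The paper argues by decomposing $\R^\N$ according to the signs and relative sizes of $\<x_1,y\>$ and $\<x_2,y\>$: on the set where both are positive it uses the pointwise bound $\e^{2A\theta\<x_1,y\>+2A(1-\theta)\<x_2,y\>}\le\e^{2A\<x_i,y\>}$ with $i$ chosen so that $\<x_i,y\>$ is the larger, and on the regions where some exponent is nonpositive it invokes $v\in L^1(\R^\N)$ (this is also where the paper's preliminary observation that $I_v(x)=\infty$ forces $I_v(\lambda x)=\infty$ for $\lambda\ge1$ comes in). Your argument instead factors the integrand and applies H\"older with exponents $1/\lambda$ and $1/(1-\lambda)$, obtaining the stronger quantitative statement $I_v(\lambda x_1+(1-\lambda)x_2)\le I_v(x_1)^{\lambda}I_v(x_2)^{1-\lambda}$, i.e.\ log-convexity of the two-sided Laplace transform. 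This is shorter, and it has the additional merit that convexity of $\{x:\ I_v(x)<\infty\}$ follows for \emph{any} nonnegative measurable $v$, with the hypothesis $v\in L^1(\R^\N)$ used only to guarantee $I_v(0)<\infty$, i.e.\ that the set contains the origin; the paper's more elementary pointwise-domination argument, by contrast, uses $v\in L^1$ throughout, but records along the way the complementary structural fact about rays (the blowup set is stable under dilation by $\lambda\ge1$). Both proofs are complete and correct.
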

\begin{proof}
Since $\e^{-A|y|^{2}} \tau_{-x}  u_0 (y) =
\e^{A|x|^{2}} \e^{2A \<x,y\>} v(x+y) = \e^{-A|x|^{2}} \e^{2A \<x,x+y\>} v(x+y) $ part (i) follows.

For part (ii) note  that since $v\in L^1(\R^\N)$, it is always the case that
$$
\int_{\<x,y\>\le0}\e^{\lambda 2A\<x,y\>}v(y)\,\d y<\infty
$$
whenever $\lambda\ge0$.

Now observe that if $I_v(x)=\infty$ then for any $\lambda\ge1$
$$
\int_{\<x,y\> > 0} \e^{\lambda 2A
  \<x,y\>} v(y) \, \d y = \infty,
  $$
  i.e.\ $I_v(\lambda x)= \infty$ for any $\lambda \geq 1$.

Consider now $x_{1}, x_{2}$ such that $I_v(x_{i}) <  \infty$ and take $\theta
\in (0,1)$. Then  $I_v(\theta x_{i}) <  \infty$ and  $I_v((1-\theta) x_{i}) <  \infty$ and
\begin{displaymath}
  I_v(\theta x_{1}+ (1-\theta) x_{2}) =  \int_{\R^{\N}} \e^{2A
    \theta\<x_{1},y\>} \e^{2A (1-\theta)\<x_{2},y\>}  v(y) \, \d y .
\end{displaymath}
Observe that the integral in the regions where either
$\<x_{1},y\> \leq 0$ or  $\<x_{2},y\> \leq 0$ is finite while
\begin{displaymath}
  \int_{\{ \<x_{1},y\> >0, \ \<x_{2},y\> >0\} } \e^{2A    \theta\<x_{1},y\>} \e^{2A (1-\theta)\<x_{2},y\>}  v(y) \, \d y
\end{displaymath}
can be split in the regions $\<x_{1},y\> \geq  \<x_{2},y\>$ and
$\<x_{1},y\> < \<x_{2},y\>$. In both of these the integral is finite,
which completes the proof.
\end{proof}

Now we give some examples of initial data as in (\ref{eq:choosing_u0})
and explicitly compute its regular set.
In our  first example  $I_v(x)$ is never finite so we obtain complete blowup, generalising the example in (\ref{eq:blowing_up_quadratic_exponential}).

\begin{example}
  If we take $v(x) \geq c >0$ for all $x\in \R^{\N}$ then
  $\eps_0(u_0)=A$ and in \eqref{eq:choosing_u0}  we have
  $\tau_{-x}u_0\notin L^1_A(\R^\N)$ for every $x\in\R^\N$.
  \end{example}

In this case the solution $u$ of the heat equation given by
(\ref{eq:solution_heat_up_t=0}) and initial data
(\ref{eq:choosing_u0})  blows up at every point in $\R^{\N}$ at
time $T=\frac{1}{4A}$.

In our next example $I_v(x)$ is only finite at $x=0$ so the regular
set consists of a single point at the origin. Thus  the solution $u$ of the heat equation given by
(\ref{eq:solution_heat_up_t=0}) has a finite limit at $x=0$  as $t\to T$,
but blows up at all other points of $\R^\N$.

\begin{example}
  When $v(x) = (1+|x|^{2})^{-\alpha/2}$ with $\alpha >
\N/2$ we have  $\eps_{0}(u_0) =A$ and  in (\ref{eq:choosing_u0})  we have  $\tau_{-x}  u_0 \in
L^{1}_{A}(\R^\N)$ only when $x=0$.
\end{example}

To see this we write $y = sx + y'$ with $y'\perp x$ to get
\begin{displaymath}
  I(x)= \int_{\R^{\N-1}}  \int_{-\infty}^{\infty}  \frac{\e^{2A|x|^{2}
    s}}{(1+s^{2}|x|^{2}
  + |y'|^{2})^{\alpha/2}}   \,  \d s\,  \d y'
\end{displaymath}
If $x\neq 0$ then the integral in $s$ is infinity for each $y'\in \R^{N-1}$.

In the next example $I_v(x)$ is finite only in the open ball
$|x|<\gamma/2A$; the solution $u$ of the heat equation given by
(\ref{eq:solution_heat_up_t=0}) has a finite limit here as $t\to T$,
but blows up at all other points of $\R^\N$.

\begin{example}
  Take  $v(x) = \e^{-\gamma |x| }$ with
$\gamma >0$. Then $\eps_{0}(u_0) =A$ and  in (\ref{eq:choosing_u0})  we have  $\tau_{-x}  u_0 \in
L^{1}_{A}(\R^\N)$ if and  only if  $|x| <\frac{\gamma}{2A}$.
\end{example}

To see this note first that
\begin{displaymath}
0\leq   \e^{2A \<x,y\>} v(y)  \leq   \e^{(2A  |x|-\gamma )|y|}
\end{displaymath}
which is integrable if $|x|< \frac{\gamma}{2A}$.
On the other hand writing $y = sx + y'$ with $y'\perp x$
\begin{displaymath}
I(x)=
\int_{\R^{\N-1}}  \int_{-\infty}^{\infty} \e^{2A |x|^{2}s}  \e^{-\gamma
  \sqrt{s^{2} |x|^{2} +|y'|^{2}}} \,  \d s\, \d y'.
\end{displaymath}
If $2A |x|^{2} -\gamma |x| \geq 0$, that is, $|x| \geq \frac{\gamma}{2A}$,
then the integral in $s$ is infinite for each $y'\in \R^{\N-1}$.

It is also possible to make  $I_{v}(x)$  finite  only on  a closed ball.

\begin{example}
  Take  $v(x) = \e^{-\gamma |x| } (1+|x|^{2})^{-\alpha/2}$ with $\alpha >
\N/2$ and  $\gamma >0$. Then $\eps_{0}(u_0) =A$ and  in (\ref{eq:choosing_u0})  we have $\tau_{-x}  u_0 \in
L^{1}_{A}(\R^\N)$ if and  only if  $|x| \leq \frac{\gamma}{2A}$.
\end{example}

To see this note first that
\begin{displaymath}
0\leq   \e^{2A \<x,y\>} v(y)  \leq   \e^{(2A  |x|-\gamma )|y|} (1+|y|^{2})^{-\alpha/2}
\end{displaymath}
which is integrable if $|x|\leq  \frac{\gamma}{2A}$. 
On the other hand, if $x\neq 0$ writing $y = sx + y'$ with $y'\perp x$
\begin{displaymath}
I_{v}(x)=
\int_{\R^{\N-1}}  \int_{-\infty}^{\infty} \e^{2A |x|^{2}s}  \e^{-\gamma
  \sqrt{s^{2} |x|^{2} +|y'|^{2}}}
\frac{1}{(1+s^{2}|x|^{2}
  + |y'|^{2})^{\alpha/2}} \,  \d s\, \d y'.
\end{displaymath}
If $2A |x|^{2} -\gamma |x| > 0$, that is, $|x| > \frac{\gamma}{2A}$,
then the integral in $s$ is infinite for each $y'\in \R^{\N-1}$.

Our last example is perhaps the most striking: here $I_v(x)$ is finite for all $x\in \R^{\N}$.

\begin{example}\label{howodd}
  Take  $v(x) = \e^{-\gamma |x|^{\alpha}}$ with
$\gamma >0$, $1<\alpha <2$. Then $\eps_{0}(u_0) =A$ and  in
(\ref{eq:choosing_u0})  we have   $\tau_{-x}  u_0 \in
L^{1}_{A}(\R^\N)$   for any $x\in \R^\N$.
\end{example}

To see this note that
\begin{displaymath}
 0\leq   \e^{2A \<x,y\>} v(y)  =  \e^{2A <x,y> -\gamma
   |y|^{\alpha}} \leq \e^{2A |x| |y| -\gamma   |y|^{\alpha}} \in L^{1}(\R^\N).
\end{displaymath}
Thus for the initial data $u_0(x)=\e^{A|x|^2-\gamma|x|^\alpha}$ the
solution $u(x,t)$ of the heat equation takes a finite value at every point in $\R^\N$
at $T=1/4A$, but cannot be continued beyond this time.

We now show that in fact we can arrange for the regular set,  $\{x:\
I_v(x)<\infty\}$,  to be any chosen closed convex subset of $\R^\N$. First we recall the following characterisation of such sets.

\begin{lemma} \label{lem:closed_convex_set}
  Any closed convex set  with $0 \in K$  is of the form
  $$
  K= \bigcap_{j\in J} \{x:\  \<x, n_j\> \le c_j\}
  $$
  for some unit vectors $n_{j}$ and
  $c_{j} \geq 0$,   where $J$ is at most countable.

\end{lemma}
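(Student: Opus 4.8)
The plan is to deduce this from the standard representation of a closed convex set as the intersection of all the closed half-spaces containing it, and then to cut that family down to a countable one. The two ingredients are the Hahn--Banach separation theorem and the second-countability (equivalently, Lindel\"of property) of $\R^{\N}$; the condition $0\in K$ will be used only to force the constants to be non-negative.

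First I would carry out the separation step. For each $x_{0}\in\R^{\N}\setminus K$, since $K$ is closed and convex and $\{x_{0}\}$ is compact and convex, the separation theorem yields a unit vector $n(x_{0})$ and a scalar $c(x_{0})$ with $\<y,n(x_{0})\>\le c(x_{0})<\<x_{0},n(x_{0})\>$ for all $y\in K$; the associated closed half-space $H_{x_{0}}:=\{x:\ \<x,n(x_{0})\>\le c(x_{0})\}$ then contains $K$ but excludes $x_{0}$. Evaluating the defining inequality at $y=0\in K$ gives $c(x_{0})\ge\<0,n(x_{0})\>=0$, so every constant produced this way is non-negative. Since each $H_{x_{0}}\supseteq K$ and each $x_{0}\notin K$ is excluded by its own half-space, this already proves $K=\bigcap_{x_{0}\notin K}H_{x_{0}}$.

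Next I would reduce the index set. The complementary open half-spaces $U_{x_{0}}:=\{x:\ \<x,n(x_{0})\>>c(x_{0})\}$ cover the open set $\R^{\N}\setminus K$ (as $x_{0}\in U_{x_{0}}$), and by the Lindel\"of property this cover has a countable subcover $\{U_{x_{j}}\}_{j\in J}$. Setting $n_{j}=n(x_{j})$ and $c_{j}=c(x_{j})\ge0$, the containment $\R^{\N}\setminus K\subseteq\bigcup_{j}U_{x_{j}}$ is precisely $\bigcap_{j}\{x:\ \<x,n_{j}\>\le c_{j}\}\subseteq K$, and the reverse inclusion is automatic because each half-space contains $K$; this gives the stated representation with $J$ at most countable (taking $J=\emptyset$ when $K=\R^{\N}$). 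The argument is otherwise routine; the only step that needs genuine care is this last countability reduction, since no polyhedrality is assumed and in general infinitely many half-spaces really are required --- the Lindel\"of property of $\R^{\N}$ is exactly what makes countably many suffice.
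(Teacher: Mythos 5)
Your proof is correct and follows essentially the same route as the paper's: write $K$ as an intersection of closed half-spaces containing it (with $c_j\ge 0$ forced by $0\in K$), then observe that the complementary open half-spaces cover the open set $\R^{\N}\setminus K$ and extract a countable subcover. The only cosmetic difference is that you build the half-spaces by Hahn--Banach separation at exterior points $x_0\notin K$, whereas the paper invokes the standard representation by supporting half-spaces at boundary points (citing Simon); both reductions to countably many half-spaces are the same Lindel\"of argument.
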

\begin{proof}
Note first  that $K$ is the intersection of all closed half spaces containing
$K$. Then observe that   $K= \bigcap_{y   \in \partial K} \{x:\ \<x,
  n(y)\> \le c(y)\}$  for some unit vectors
$n(y)$ and constants $c(y) \geq 0$, see e.g.   \cite{S}.

This implies that, $\bigcup_{y   \in \partial K} \{x:\ \<x,
  n(y)\> > c(y)\}$  is an open  covering of the open set
  $\R^{\N} \setminus K$. Thus we can extract an, at most,  countable
covering.
\end{proof}

Note that the form of $K_0$ in the following  results is more general
than that given by the previous lemma; in particular it allows for any
closed convex set.

\begin{proposition}\label{anyconvex}
Assume that  $K \subset \R^{\N}$ is a  convex set  given by the intersection of at
most a countable number of half spaces, that is, $K=
x_{0} + K_{0}$ with
\be{convexK}
0\in K_{0}= \bigcap_{j\in J_{1}} \{x:\
  \<x, n_j\> \leq c_j\} \cap \bigcap_{j\in J_{2}} \{x:\
  \<x, n_j\> < c_j\}
\ee
 for some unit vectors $n_{j}$ and
  $c_{j} \geq 0$ for $j\in J_{1}$ and $c_{j} >0$ for $j\in J_{2}$,
  where $J_{1}$ and $J_{2}$ are  at most countable and $\inf_{j\in
    J_{2}} c_{j} >0$.

Then    there exist $0\leq v \in
L^{1}_{\loc}(\R^{\N})$ such that
\begin{displaymath}
  I_{v}(x) < \infty \quad \mbox{if and only if }x\in K:
\end{displaymath}
the solution $u$ of the heat equation with initial data $u_{0}(x)= \e^{A|x|^2}v(x)$
 has a finite limit at every point $x \in
K$ but blows up at every other point in $\R^{\N}$  as $t\to \frac{1}{4A}$.

\end{proposition}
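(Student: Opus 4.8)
The plan is to reduce everything to the construction of a single nonnegative weight and then to build that weight as a sum of one contribution per half-space. First I would reduce to the case $x_0=0$: by the translation identity $u(x,t,u_{0})=u(x-x_0,t,\tau_{-x_0}u_0)$ recorded just before Lemma~\ref{lem:blowup-set}, together with Lemma~\ref{lem:translation_of_measures}, it suffices to treat $K=K_0$. By Lemma~\ref{lem:blowup-set}(i), for $u_0=\e^{A|x|^2}v$ the translate $\tau_{-x}u_0$ lies in $L^1_{\eps_0}(\R^{\N})$ exactly when $I_v(x)=\int_{\R^{\N}}\e^{2A\<x,z\>}v(z)\,\d z<\infty$; hence by Theorem~\ref{thr:blow-up} the whole statement reduces to exhibiting some $0\le v\in L^1_\loc(\R^{\N})$ with $\{x:\ I_v(x)<\infty\}=K_0$.

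For the construction, for each index $j$ I would write $z=s\,n_j+z'$ with $z'\perp n_j$, so that $\<x,z\>=s\<x,n_j\>+\<x_\perp,z'\>$ with $x_\perp=x-\<x,n_j\>n_j$, and set $v_j(z)=w_j(s)\,\e^{-|z'|^2}$, where the directional profile $w_j$ is supported on $s\ge0$ and chosen as $w_j(s)=\e^{-2Ac_js}(1+s)^{-\beta}$ with $\beta>1$ for the closed constraints $j\in J_1$, and as $w_j(s)=\e^{-2Ac_js}/(1+s)$ for the open constraints $j\in J_2$. Because the exponent splits, $I_{v_j}(x)$ factorises as the product of the directional integral $\int_0^\infty \e^{2A(\<x,n_j\>-c_j)s}w_j(s)\e^{2Ac_js}\,\d s$ and the Gaussian integral $\int_{\R^{\N-1}}\e^{2A\<x_\perp,z'\>-|z'|^2}\,\d z'=\pi^{(\N-1)/2}\e^{A^2|x_\perp|^2}$, the latter finite for \emph{every} $x$ since $|x_\perp|^2\le|x|^2$. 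The directional factor is finite exactly when $\<x,n_j\>\le c_j$ in the closed case (the threshold integral $\int_0^\infty(1+s)^{-\beta}\,\d s$ converges) and exactly when $\<x,n_j\><c_j$ in the open case (the threshold integral $\int_0^\infty(1+s)^{-1}\,\d s$ diverges), and it is $+\infty$ otherwise. Thus $I_{v_j}(x)<\infty$ iff the $j$-th constraint is satisfied.

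Taking $v=\sum_j a_jv_j$ with $a_j>0$ and $\sum_j a_j<\infty$ gives $v\in L^1(\R^{\N})\subset L^1_\loc(\R^{\N})$, because each $\|v_j\|_{L^1(\R^{\N})}$ is bounded uniformly in $j$: for $j\in J_1$ one has $\int_0^\infty w_j\le\int_0^\infty(1+s)^{-\beta}\,\d s$, while for $j\in J_2$ one has $\int_0^\infty w_j\le\int_0^\infty\e^{-2Ac_js}\,\d s=(2Ac_j)^{-1}\le(2A\inf_{J_2}c_j)^{-1}$, which is exactly where the hypothesis $\inf_{j\in J_2}c_j>0$ is used. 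By Tonelli, $I_v(x)=\sum_j a_jI_{v_j}(x)$. If $x\notin K_0$ then some constraint fails, the corresponding $I_{v_j}(x)=\infty$, and $I_v(x)=\infty$; so $\{I_v<\infty\}\subseteq K_0$ for any choice of weights. For the reverse inclusion the closed indices are harmless: their directional factors are bounded by $\int_0^\infty(1+s)^{-\beta}\,\d s$ uniformly, so $\sum_{j\in J_1}a_jI_{v_j}(x)\le C\e^{A^2|x|^2}\sum_{j\in J_1}a_j<\infty$. In particular, when \emph{every} constraint is closed — which by Lemma~\ref{lem:closed_convex_set} already realises an arbitrary closed convex set — the argument is complete.

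The main obstacle is the open constraints. As $x\in K_0$ approaches an open face, the factor $\int_0^\infty\e^{2A(\<x,n_j\>-c_j)s}(1+s)^{-1}\,\d s\sim\log\frac1{c_j-\<x,n_j\>}$ diverges, so I must choose the weights $a_j$ carefully to guarantee $\sum_{j\in J_2}a_jI_{v_j}(x)<\infty$ at \emph{every} $x\in K_0$ simultaneously, including relative-boundary points at which $\inf_j(c_j-\<x,n_j\>)=0$. The choice of the mild, logarithmic blow-up (in place of the $1/(c_j-\<x,n_j\>)$ blow-up coming from a pure exponential profile) is precisely what should make this feasible; I would fix the $a_j$ by a diagonal argument against a countable dense subset of $K_0$ and then propagate finiteness to all of $\mathrm{relint}(K_0)$ using convexity of the partial sums $\sum_j a_j/b_j(x)$, with the remaining relative-boundary points of $K_0$ on open faces demanding the most delicate bookkeeping. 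Finally, undoing the translation and invoking Theorem~\ref{thr:blow-up} shows that the solution with $u_0(x)=\e^{A|x|^2}v(x)$ has a finite limit as $t\to\frac1{4A}$ precisely on $K$, and blows up elsewhere.
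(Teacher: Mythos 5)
Your proposal follows the paper's own proof very closely: the same reduction to $x_0=0$, the same design of one building block per half-space (a directional profile along $n_j$ times a transverse factor, so that $I_{v_j}$ factorises), the same use of $\inf_{j\in J_2}c_j>0$ for the uniform $L^1$ bound, and the same weighted sum at the end. Your variants (Gaussian transverse factor instead of an indicator; $(1+s)^{-\beta}$ instead of $(1+s^2)^{-1}$ for closed constraints; the extra factor $(1+s)^{-1}$ for open constraints, so that the directional integral diverges only logarithmically as $x$ approaches an open face rather than like $1/(c_j-\<x,n_j\>)$) are all sound, and the last is a genuine improvement on the paper. The gap is the step you explicitly defer: showing the $a_j$ can be chosen so that $\sum_{j\in J_2}a_jI_{v_j}(x)<\infty$ at \emph{every} $x\in K_0$. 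Your diagonalisation over a countable dense subset plus convexity does cover $\mathrm{relint}(K_0)$ (indeed, using $0\in K_0$ and $\inf_{j\in J_2}c_j>0$, a compactness argument gives $\liminf_{j\in J_2}(c_j-\<x,n_j\>)>0$ for every $x\in\mathrm{relint}(K_0)$, so there even fixed weights suffice); but the points of $K_0$ on its relative boundary, where the open faces accumulate, are exactly the difficulty, and ``delicate bookkeeping'' is not a proof.

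Moreover, this gap cannot be filled, because the statement fails in the stated generality. For any $0\le v$, Fatou's lemma shows $I_v$ is lower semicontinuous, so $\{I_v=\infty\}=\bigcap_n\{I_v>n\}$ is a $G_\delta$. Now take $\N=2$, $J_1=\emptyset$, $c_j\equiv 1$, and let $\{n_j\}_{j\in\NN}$ enumerate a countable dense set of unit vectors: the hypotheses hold, and $K_0=\bigcap_j\{x:\<x,n_j\><1\}=B(0,1)\cup\{x:\ |x|=1,\ x\ne n_j\ \forall j\}$ (density excludes every $|x|>1$, while on the unit circle only the $n_j$ themselves are excluded). A $v$ realising this $K_0$ would force $\{I_v=\infty\}$ to meet the unit circle exactly in the countable dense set $\{n_j\}_j$, which is not a relative $G_\delta$ by Baire's theorem; hence no such $v$ exists, and no choice of profiles or weights can complete your argument. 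For comparison, the paper's own proof hides the identical gap behind the word ``clearly'', and with its fixed weights $j^{-2}$ and pure exponential open profiles it already fails at $x=(1,0)\in K_0$ when $c_j\equiv1$ and $n_j=(\cos\theta_j,\sin\theta_j)$ with $1-\cos\theta_j=4^{-j}$, since the sum at $(1,0)$ then diverges like $\sum_j j^{-2}4^j$. So you have been more careful than the paper precisely where both arguments break. What both arguments do prove is the proposition for those $K_0$ having no point on a limiting hyperplane of the open constraints --- in particular whenever $J_2$ is finite, and for every closed convex set ($J_2=\emptyset$, via Lemma \ref{lem:closed_convex_set}).
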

\begin{proof}
Assume first that $x_{0}=0$.
Take any orthonormal  basis $\mathcal{B} = \{e_{j}\}_{j}$ of
$\R^{\N}$. Using coordinates with respect to this basis, write
$x\in\R^{\N}$ as $(x_{1},x')$ and $y=(y_{1},y')$. We choose
$\eta_0\geq 0$
and set
\begin{displaymath} 
0\leq v(x)=\begin{cases}\chi(x')\phi(x_{1})\e^{-\eta_0 x_{1}},&x_{1}>0\\
0&x_{1}<0,
\end{cases}
\quad
0\leq w(x)=\begin{cases}\chi(x') \e^{-\eta_0 x_{1}},&x_{1}>0\\
0&x_{1}<0,
\end{cases}
\end{displaymath}
where $\chi$ is the characteristic function of the unit ball in $\R^{\N-1}$ and
$\phi(s)=\frac{1}{1+s^2}$.
Note that $v \in L^{1}(\R^{\N})$ with $\|v\|_{ L^{1}(\R^{\N})} \leq M$ and
if $\eta_{0} >0$ then $w \in L^{1}(\R^{\N})$ with $\|w\|_{
  L^{1}(\R^{\N})} \leq \frac{M}{\eta_{0}}$ with $M$ independent
of $\eta_{0}$.  Also,  for $x\in \R^{\N}$
\begin{align*}
I_{v}(x) = \int_{\R^{\N}} \e^{2A\<x, y\>} v(y) \,\d y
& = \int_0^\infty \int_{|y'|\le  1} \e^{(2A x_{1}- \eta_0)  y_{1}} \e^{2A \<x', y' \>}
   \phi(y_{1}) \,\d y' \,\d   y_{1} \\
& = \left( \int_{|y'|\le 1}\e^{2A \< x',  y'\>} \,\d  y'\right) \left(
  \int_0^\infty \e^{ (2A x_{1}-\eta_0) y_{1}}\phi(y_{1})\,\d y_{1}\right).
\end{align*}
The first factor is always finite, and the second is finite if
$x_{1} \leq  \frac{\eta_{0}}{2A}$ and infinite if $x_{1} >
\frac{\eta_{0}}{2A}$. So choosing $\eta_{0} \geq 0$ appropriately, for
 any given  $c\geq 0$ we
can ensure that $I_{v}(x) <  \infty$ iff $x\in \{z:\ z_{1}
\leq c\}$.

An analogous computation with $w$ gives that  for
 any given  $c > 0$ we obtain $I_{w}(x) <  \infty$ iff $x\in \{z:\ z_{1}
< c\}$.

Hence for any given unit vector $n$ and $c\geq  0$ (respectively
$c>0$) we can find an
integrable function $v=v(n,c)$ ($w=w(n,c)$ respectively)  such that
\begin{displaymath}
I_{v}(x) <\infty \quad \mbox{ only in the half space  $\<x, n\> \leq c$ \quad  ($\<x, n\> <  c$ respectively)}
\end{displaymath}
 with $\|v\|_{ L^{1}(\R^{\N})}$ bounded
independent of $n$ and $c\geq 0$ ($\|w\|_{ L^{1}(\R^{\N})} \leq
\frac{M}{c}$, $M$ independent of $n$).

Based on the assumed form of $K_{0}$ in \eqref{convexK} 
 we  set
 \begin{displaymath}
v_{0}(x)=\sum_{j \in J_{1}} j^{-2}v_j(x) + \sum_{j \in
  J_{2}}  j^{-2}w_j(x)
 \end{displaymath}
where $v_j(x) =
v(n_{j}, c_{j})(x)$ and $w_j(x) =
w(n_{j}, c_{j})(x)$  are
constructed as above. Since $\inf_{j\in
    J_{2}} c_{j} >0$ then  $v_{0} \in L^{1}(\R^{\N}) $ and clearly   $I_{v_{0}}(x)
  <\infty$ iff $x\in K_{0}$.

Now, for $x_{0} \neq 0$, define
\begin{displaymath}
  v(x) = e^{-2A \<x_{0},x\>} v_{0}(x), \quad x\in \R^{\N} .
\end{displaymath}
Then $I_{v}(x) = \D \int_{\R^{\N}} \e^{2A \<x-x_{0}, y\>} v_{0}(y) \, \d y <\infty$ iff $x-x_{0}\in
K_{0}$.
\end{proof}

\subsection{Continuation of signed solutions}
\label{sec:cont-sign-solut}

For signed solutions the maximal existence time of the solution may not be
given by $T= \frac{1}{4\eps_{0}(u_{0})}$ as in Theorem \ref{thr:blow-up}; see
Section \ref{sec:prescribed_g_at_x=0}. However, we can establish the
following continuation result.

\begin{proposition} \label{prop:prolongation}

Assume that $u_{0} \in \mathcal{M}_{\eps}(\R^\N)$ and that $u(t;u_{0}) = S(t) u_{0}$ given by
(\ref{eq:solution_heat_up_t=0}) is defined on $[0,T)$ but cannot be
defined any time after. Then for any $\delta >0$
\begin{displaymath}
  \limsup_{t\to T}  \|u(t,u_{0})\|_{ L^1_{\delta}(\R^\N)}  = \infty .
\end{displaymath}

\end{proposition}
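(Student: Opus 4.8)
The plan is to argue by contradiction, exploiting the fact that an $L^1_\delta$ bound at times approaching $T$ lets one \emph{restart} the evolution from a time just before $T$ and gain a fixed extra lifespan $T(\delta)=\frac{1}{4\delta}$, which is enough to cross $T$.

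Concretely, fix $\delta>0$ and suppose, contrary to the claim, that $\limsup_{t\to T}\|u(t)\|_{L^1_\delta(\R^\N)}=:M<\infty$. Then arbitrarily close to $T$ there are times at which this norm is at most $M+1$, so I can select $t_0<T$ enjoying both $T-t_0<T(\delta)=\frac{1}{4\delta}$ and $\|u(t_0)\|_{L^1_\delta(\R^\N)}<\infty$. Since $L^1_\delta(\R^\N)\subset\mathcal{M}_\delta(\R^\N)$ isometrically, this says precisely that $u(t_0)\in\mathcal{M}_\delta(\R^\N)$.

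Next I would apply Theorem \ref{thm:properties_sltns_given_u0} to the initial datum $u(t_0)$: the function $s\mapsto S(s)u(t_0)$ is a classical solution of the heat equation, well defined for all $s\in[0,T(\delta))=[0,\frac{1}{4\delta})$. The semigroup identity of Proposition \ref{prop:semigroup_estimates}(ii) yields $S(s)u(t_0)=u(t_0+s)$ for $0\le s<T-t_0$, so $s\mapsto S(s)u(t_0)$ agrees with $u$ on the overlap $[t_0,T)$ and therefore continues $u$ to the interval $[t_0,t_0+T(\delta))$. Because $t_0+T(\delta)>t_0+(T-t_0)=T$, this produces a solution defined strictly beyond $T$, contradicting the hypothesis that $u$ cannot be continued past $T$. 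Since $\delta>0$ was arbitrary, the $\limsup$ must be infinite for every $\delta$.

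The main obstacle is making precise what ``cannot be defined after $T$'' means and confirming that the restarted object genuinely contradicts it, especially for signed data, where the original representation $S(t)u_0$ may fail to converge for $t\ge T$ even though $S(t-t_0)u(t_0)$ converges. This is resolved by Proposition \ref{prop:semigroup_estimates}(ii), which forces the two descriptions to coincide on $[t_0,T)$ and hence exhibits $S(\cdot-t_0)u(t_0)$ as a bona fide continuation of the \emph{same} solution. A minor point to check is that the semigroup identity is available on the whole existence interval $[0,T)$ for the chosen $t_0$; its proof is a completion-of-squares and Fubini computation that is valid wherever the relevant kernel integrals converge, so the identity $u(t_0+s)=S(s)u(t_0)$ holds throughout $[0,T-t_0)$.
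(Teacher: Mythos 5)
Your overall skeleton is the same as the paper's: argue by contradiction, pick $t_0<T$ with $u(t_0)\in L^1_\delta(\R^\N)\subset\M_\delta(\R^\N)$ and $T-t_0<T(\delta)$, and restart the flow from $u(t_0)$ to gain the fixed extra lifespan $T(\delta)$. The gap is in your final step, where you claim the restarted solution ``contradicts the hypothesis that $u$ cannot be continued past $T$.'' That is not the hypothesis. The hypothesis is that the representation formula $S(t)u_0$ of (\ref{eq:solution_heat_up_t=0}), with the \emph{original} datum $u_0$, cannot be defined for times beyond $T$; and since the proposition makes no sign assumption (it sits precisely in the subsection on continuation of \emph{signed} solutions), the existence of some continuation of the function $u$ is genuinely weaker than that. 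Indeed $|u(t_0)|\le S(t_0)|u_0|$, in general with strict inequality because of cancellation, so the integral defining $S(s)u(t_0)$ may converge for all $s<T(\delta)$ while the kernel integrated against $|u_0|$ at time $t_0+s>T$ diverges; equivalently, Fubini is not available to conclude $S(s)\bigl(S(t_0)u_0\bigr)=S(t_0+s)u_0$. Your proposed fix via Proposition \ref{prop:semigroup_estimates}(ii) does not repair this: that identity gives agreement of the two descriptions only on the overlap $[t_0,T)$, where all integrals already converge, and says nothing at or beyond $T$. (For \emph{non-negative} $u_0$ your argument closes with one extra line, since by Tonelli $S(s)(S(t_0)u_0)=S(t_0+s)u_0$ as identities in $[0,\infty]$, so finiteness of the left side forces convergence of $S(t)u_0$ past $T$; but this is exactly the step that fails for sign-changing data.)

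What the paper does instead is to glue
\begin{displaymath}
  U(t)=\begin{cases} S(t)u_{0}, & 0\le t<t_{0},\\[2pt] S(t-t_{0})\,u(t_{0}), & t_{0}\le t<t_{0}+T(\delta),\end{cases}
\end{displaymath}
and then \emph{verify} that $U$ satisfies the hypotheses (\ref{eq:uniqueness_Du_in_L1delta}), (\ref{eq:uniqueness_u_in_L1eps}) and (\ref{eq:uniqueness_initial_data_Cc}) of the uniqueness Theorem \ref{thr:uniqueness} on $[0,t_0+T(\delta))$; this is where the assumed bound $\|u(t)\|_{L^1_\delta(\R^\N)}\le M$ on $[0,T)$ is really used, together with Theorem \ref{thm:properties_sltns_given_u0}, Proposition \ref{prop:semigroup_estimates}(i), and the pointwise bounds (\ref{bound_above_x=0}) and (\ref{eq:exponential_bound_further_derivatives}). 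The conclusion of Theorem \ref{thr:uniqueness} is then that $U(t)=S(t)u_0$ on all of $[0,t_0+T(\delta))$, i.e.\ the extension is identified as the solution generated by $u_0$ itself, and only this identification --- not mere agreement with $u$ on $[t_0,T)$ --- contradicts the maximality of $T$. This verification-plus-identification is the substantive content of the paper's proof, and it is the step missing from yours.
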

\begin{proof}
  Assume otherwise that for some $\delta >0$ (which, without loss of
  generality, we  can take such that $\delta > \eps$)
\begin{displaymath}
  \|u(t,u_{0})\|_{ L^1_{\delta}(\R^\N)} \leq M, \quad 0\leq t < T.
\end{displaymath}
Take $t_{0}
<T$ such that $T < t_{0} + T(\delta)/2$ and define $v_{0} = S(t_{0})
u_{0} \in  L^1_{\delta}(\R^\N)$.
Then define
\begin{displaymath}
  U(t) =
  \begin{cases}
    S(t)u_{0} & 0\leq t < t_{0} \\
S(t-t_{0}) v_{0} & t_{0}\leq t < t_{0} + T(\delta) ,
  \end{cases}
\quad 0\leq t <  t_{0} + T(\delta)  .
\end{displaymath}
Then we claim  that $U$ satisfies   assumptions
(\ref{eq:uniqueness_Du_in_L1delta}),
(\ref{eq:uniqueness_u_in_L1eps}),
(\ref{eq:uniqueness_initial_data_Cc})
in $[0, t_{0} +  T(\delta)) $.
Hence, Theorem \ref{thr:uniqueness} implies $U(t) = S(t) u_{0}$ for
$0\leq t <  t_{0} +  T(\delta)$ which
contradicts the maximality of $T$.

To prove the claim, notice that (\ref{eq:uniqueness_initial_data_Cc})  is
satisfied (using Theorem \ref{thm:properties_sltns_given_u0}).
Also, (\ref{eq:uniqueness_u_in_L1eps}) holds because of the assumption
on $u$ and by part (i) in Proposition \ref{prop:semigroup_estimates}
applied to $S(t-t_{0}) v_{0}$ for   $t_{0}\leq t \leq t_{0} + \tau$ for
any $\tau <T(\delta)$. Finally  (\ref{eq:uniqueness_Du_in_L1delta})
follows from (\ref{bound_above_x=0})
and (\ref{eq:exponential_bound_further_derivatives}) applied to
$S(t)u_{0}$ with  $0\leq t < t_{0}$ and $S(t-t_{0}) v_{0}$ with
$t_{0}\leq t \leq t_{0} +\tau$ for any $\tau <  T(\delta)$.
\end{proof}

\section{Long-time behaviour of heat solutions}
\label{sec:asympt-behav-heat}
\setcounter{equation}{0}

We now discuss the asymptotic behaviour as $t\to\infty$ of solutions when
$u_{0} \in \mathcal{M}_{0}(\R^\N)$. For this  Lemma
\ref{lem:estimate_from_x=0} will be a central tool.
We start with some simple consequences of this result, which show that
the asymptotic behaviour is largely determined by the behaviour at
$x=0$. Observe that the converse of parts  (i), (ii), (iii), and (v) are
obviously true.

\begin{proposition} \label{prop:estimates_from_u(0;t)}
Assume that $u_{0} \in \mathcal{M}_{0}(\R^\N)$.
\begin{itemize}
\item[\rm(i)] If $u(0,t,|u_{0}|)$ is bounded for $t>0$ then $u(t,u_{0})$
  remains uniformly bounded
in sets $\frac{|x|}{\sqrt{t}} \leq R$. In particular, $u \in
L^{\infty}_\loc(\R^\N \times (0,\infty))$.

\item[\rm(ii)] If $u(0,t,|u_{0}|) \to 0$ as $t\to \infty$ then $u(t,u_{0})\to 0$ uniformly in
sets $\frac{|x|}{\sqrt{t}} \leq R$.
\end{itemize}
Assume in addition that $u_0\ge0$. Then
\begin{itemize}
\item[\rm(iii)] If $\lim_{t\to \infty} u(0,t,u_{0})=L \in (0,\infty)$ exists,
then $u(x,t, u_{0}) \to L$ uniformly in compact sets of $\R^\N$.

\item[\rm(iv)] If $u(0,t,u_{0})$ is unbounded for $t>0$ then $u(t,u_{0})$ is
unbounded in sets $\frac{|x|}{\sqrt{t}} \leq R$, and so in particular unbounded in  any compact subset of $\R^\N$.

\item[\rm(v)] If $u(0,t, u_{0}) \to \infty$ as $t\to \infty$ then $u(t,u_{0})\to
\infty$ uniformly in sets $\frac{|x|}{\sqrt{t}} \leq R$.
\end{itemize}
\end{proposition}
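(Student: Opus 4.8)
The plan is to deduce all five assertions directly from the two-sided pointwise estimate of Lemma~\ref{lem:estimate_from_x=0}, which bounds $u(x,t)$ in terms of the single quantity $u(0,\cdot)$ evaluated at the origin on a rescaled time. The guiding observation is that on a region of the form $|x|^2/t\le R^2$ the exponential factors $\e^{|x|^2/4(a-1)t}$ and $\e^{-|x|^2/4(1-b)t}$ occurring in \eqref{bound_above_x=0} and \eqref{bound_below_above_x=0} are controlled by a constant depending only on $R$ and the chosen parameter, whereas on a set where $|x|$ stays bounded they tend to $1$ as $t\to\infty$.

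For (i) and (ii) I would use only the upper bound \eqref{bound_above_x=0}, which needs no sign hypothesis. Fixing any $a>1$, on $|x|^2/t\le R^2$ we get $|u(x,t,u_0)|\le c_{\N,a}\,\e^{R^2/4(a-1)}\,u(0,at,|u_0|)$. If $u(0,\cdot,|u_0|)$ is bounded then, since $at$ still ranges over all of $(0,\infty)$, the right-hand side is bounded, giving (i); for the ``in particular'' one notes that a compact subset of $\R^\N\times(0,\infty)$ lies inside one such region because $t$ is bounded below and $|x|$ above. If instead $u(0,\cdot,|u_0|)\to0$, the same bound, uniform in $x$ over the region, yields (ii).

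For (iv) and (v) I would use the lower bound in \eqref{bound_below_above_x=0}, which is where $u_0\ge0$ enters. Fixing $0<b<1$, on $|x|^2/t\le R^2$ we have $u(x,t)\ge c_{\N,b}\,\e^{-R^2/4(1-b)}\,u(0,bt)$; as $bt\to\infty$, the hypothesis $u(0,\cdot)\to\infty$ of (v) forces $u(x,t)\to\infty$ uniformly on the region. For (iv) I would instead fix $x$: the factor $\e^{-|x|^2/4(1-b)t}\to1$ as $t\to\infty$, so choosing $t_k\to\infty$ along which $u(0,bt_k)\to\infty$ gives $u(x,t_k)\to\infty$; hence $u$ is unbounded at every single point, and therefore on $\{|x|/\sqrt t\le R\}$ and on every compact subset of $\R^\N$.

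The delicate case, which I would treat last, is (iii), where both inequalities must be used together. On a fixed ball $|x|\le M_0$ the upper bound gives, for each $a>1$, that $\limsup_{t\to\infty}\sup_{|x|\le M_0}u(x,t)\le a^{\N/2}L$, using $u(0,at)\to L$ and $\e^{M_0^2/4(a-1)t}\to1$; symmetrically the lower bound gives $\liminf_{t\to\infty}\inf_{|x|\le M_0}u(x,t)\ge b^{\N/2}L$ for each $0<b<1$. The crux is then to let $a\to1^+$ and $b\to1^-$, so that $c_{\N,a}=a^{\N/2}$ and $c_{\N,b}=b^{\N/2}$ both tend to $1$ and the two bounds collapse to $L$; squeezing then gives $\sup_{|x|\le M_0}|u(x,t)-L|\to0$. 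The only point requiring care is the order of limits --- first $t\to\infty$ for fixed $a,b$, then $a,b\to1$ --- since one cannot send the exponent and the time to their limits simultaneously.
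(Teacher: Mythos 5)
Your proof is correct and follows essentially the same route as the paper: every part is read off from the two-sided bound of Lemma \ref{lem:estimate_from_x=0}, with the upper bound (valid without a sign condition) giving (i)--(ii), the lower bound giving (iv)--(v), and the squeeze $a\to1^+$, $b\to1^-$ taken \emph{after} $t\to\infty$ giving (iii), exactly as in the paper (where that limiting step is left implicit). The only caveat --- shared with the paper's own one-line treatment of (iv) --- is that you read ``unbounded for $t>0$'' as unboundedness along some sequence $t_k\to\infty$; this is the intended interpretation, since for data such as $u_0=\delta_0$, whose solution satisfies $u(0,t)\to\infty$ only as $t\to0^+$, the compact-subset conclusion of (iv) would otherwise fail.
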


\begin{proof}
(i) and (ii).  From the upper bound in Lemma \ref{lem:estimate_from_x=0}  in sets
  with $\frac{|x|^{2}}{t} \leq R$ we get
  \begin{displaymath}
  |  u(x,t) |\leq c_{\N,a} u(0,at,|u_{0}|) \e^{\frac{|x|^{2}}{4(a-1)t}}  \leq
    c_{\N,a} u(0,at,|u_{0}|) \e^{\frac{R^{2}}{4(a-1)}}  .
  \end{displaymath}

Assume furthermore that $0\leq u_{0} \in \mathcal{M}_{0}(\R^\N)$. Then

\noindent  (iii) Using the lower and upper bounds   in Lemma
\ref{lem:estimate_from_x=0}  if  $|x|^{2}\leq R$ we get  for every $b<1<a$
  \begin{displaymath}
   b^{\N/2}   u(0,bt) \e^{-\frac{R^{2}}{4(1-b)t}}  \leq u(x,t)
   \leq  a^{\N/2} u(0,at) \e^{\frac{R^{2}}{4(a-1)t}}  .
  \end{displaymath}

\noindent (iv) and (v) From the lower bound  in Lemma
\ref{lem:estimate_from_x=0} 
in sets  with $\frac{|x|^{2}}{t} \leq R$ we get
$$
c_{\N,b} u(0,bt) \e^{-\frac{R^{2}}{4(1-b)}} \leq   \inf_{\frac{|x|^2}{t}\leq R} u(x,t).\eqno\qedhere
$$
\end{proof}

Recalling the definition of the $\mathcal{M}_\eps(\R^{\N})$ norm (\ref{eq:norm_Meps}) observe that
\begin{equation} \label{eq:value_at_zero_and_normMeps}
  u(0,t,|u_{0}|)=\frac{1}{(4\pi t)^{\N/2}} \int_{\R^\N} \e^{-\frac{|y|^2}{4t}}
\,\d |u_0(y)|  = \|u_{0}\|_{\mathcal{M}_{1/4t}(\R^\N)}
\end{equation}
hence Proposition \ref{prop:estimates_from_u(0;t)} could easily be restated in
terms of the behavior of the norms
$\|u_0\|_{\mathcal{M}_\eps(\R^{\N})}$ as $\eps\to0$. In particular  $u(t,u_{0})$
  remains uniformly bounded in sets $\frac{|x|}{\sqrt{t}} \leq R$ if
  and only if
\begin{displaymath}
   \sup_{\eps >0}  \| u_{0} \|_{ \mathcal{M}_{\eps} (\R^{\N})} < \infty
   .
\end{displaymath}

Also notice that part (iii) implies that there are no other stationary
solutions of (\ref{eq:intro_heat_equation}) other than constants. In other
words, a harmonic function in $\mathcal{M}_{0}(\R^\N)$ must be
constant.

\subsection{Sufficient conditions for decay}
\label{sec:suff-cond-decay}

As observed above,  solutions converge to zero as $t\to\infty$ if and only if
$$
\lim_{\eps\to0}\|u_0\|_{\mathcal{M}_\eps (\R^\N)  }=0.
$$
We now give some (non-sharp) conditions to ensure this, in terms of
the distribution of mass of the initial condition  measured in terms of the
averages over balls. Note that from
Proposition \ref{prop:estimates_from_u(0;t)} this behaviour is
determined by the value of the solution at $x=0$.

\begin{theorem}\label{astti}
Suppose that $u_0\in \mathcal{M}_0(\R^\N)$.
\begin{itemize}
\item[\rm(i)] If
$$
\frac{1}{R^d}\int_{R/2\le|x|\le R} \,\d  |u_0(x)|\le M
$$
then $u$ remains uniformly bounded
in sets of the form $\frac{|x|}{\sqrt{t}} \leq R$ for any $R>0$; in particular, $u \in
L^{\infty}_\loc(\R^\N \times (0,\infty))$.

\item[\rm(ii)] If
$$
\lim_{R\to\infty}\frac{1}{R^d} \int_{R/2\le|x|\le R} \,\d  |u_0(x)|=0;
$$
then $u(0,t) \to 0$ as $t\to \infty$ and hence
 $ u(t) \to 0$ in $L^{\infty}_\loc(\R^\N)$
and uniformly in sets $\frac{|x|}{\sqrt{t}} \leq R$.

\end{itemize}

Assume in addition that $u_0\ge0$. Then
\begin{itemize}
\item[\rm(iii)] If $$
\liminf_{R\to\infty} \frac{1}{R^d}\int_{|x|\le R}\,\d u_0(x) >0,
$$
then $\liminf_{t
  \to \infty} u(0,t) >0$.

\item[\rm(iv)] If
$$
\lim_{R\to\infty}\frac{1}{R^d}\int_{|x|\le R} \,\d u_0(x) =\infty
$$
then $u(0,t)\to\infty$ as $t\to\infty$.
\end{itemize}
\end{theorem}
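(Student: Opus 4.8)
The plan is to reduce all four statements to the behaviour of the single quantity $u(0,t,|u_0|)$ (for (i)--(ii)) or $u(0,t)$ (for (iii)--(iv)), and then invoke Proposition \ref{prop:estimates_from_u(0;t)} to transfer the information to the spatial sets $|x|/\sqrt t\le R$. The starting point is a radial reformulation of (\ref{eq:value_at_zero_and_normMeps}): writing $F(R):=\int_{|y|\le R}\d|u_0(y)|$ for the cumulative (radial) mass and using that the kernel depends only on $|y|$,
\[
u(0,t,|u_0|)=\frac{1}{(4\pi t)^{\N/2}}\int_0^\infty \e^{-R^2/4t}\,\d F(R).
\]
Since $\e^{-R^2/4t}=\int_R^\infty \frac{r}{2t}\e^{-r^2/4t}\,\d r$, Tonelli's theorem (all integrands are non-negative) gives the clean identity
\[
u(0,t,|u_0|)=\frac{1}{(4\pi t)^{\N/2}}\int_0^\infty \frac{r}{2t}\,\e^{-r^2/4t}\,F(r)\,\d r,
\]
which avoids any delicate boundary or atomic term. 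The substitution $r=2\sqrt t\,s$ then collapses the $t$-dependence: with $G(r):=F(r)/r^{\N}$ one finds, after the factor $(2\sqrt t)^{\N}/(4\pi t)^{\N/2}=\pi^{-\N/2}$ cancels,
\[
u(0,t,|u_0|)=\pi^{-\N/2}\int_0^\infty 2s^{\N+1}\e^{-s^2}\,G(2\sqrt t\,s)\,\d s.
\]

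Next I would translate the hypotheses, which are stated for annular averages, into statements about $G$. A dyadic summation over the annuli $\{2^{k-1}\le|x|\le 2^k\}$ shows that the bound in (i) yields $F(R)\le c(1+R^{\N})$, hence $G(R)\le F(1)R^{-\N}+B$ globally and $G$ bounded for $R\ge1$; the vanishing average in (ii) upgrades this to $G(R)\to0$ as $R\to\infty$. For (iii) and (iv), where $u_0\ge0$, the ball averages $R^{-\N}\int_{|x|\le R}\d u_0$ are exactly $G(R)$, so the hypotheses read $\liminf_{R\to\infty}G(R)>0$ and $\lim_{R\to\infty}G(R)=\infty$ respectively.

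With the representation in hand the conclusions follow by standard limit theorems applied to the fixed weight $2s^{\N+1}\e^{-s^2}$: for each fixed $s>0$, $G(2\sqrt t\,s)$ tends to the relevant limit of $G$ at infinity as $t\to\infty$. In (i) the global bound on $G$ makes the integral bounded uniformly in $t$; in (ii) the bound $G(r)\le F(1)r^{-\N}+B$ provides, for $t\ge1$, the $t$-independent integrable majorant $\frac{F(1)}{2^{\N}}s\e^{-s^2}+Bs^{\N+1}\e^{-s^2}$, so dominated convergence gives $u(0,t,|u_0|)\to0$. For (iii) and (iv), since the integrand is non-negative I would instead use Fatou's lemma to pass the liminf inside, obtaining $\liminf_{t\to\infty}u(0,t)\ge \pi^{-\N/2}\int_0^\infty 2s^{\N+1}\e^{-s^2}\,(\liminf_{R\to\infty}G(R))\,\d s$, which is strictly positive in (iii) and $+\infty$ in (iv). Finally Proposition \ref{prop:estimates_from_u(0;t)} converts these statements about the value at the origin into the claimed uniform bounds, decay, or blow-up on the sets $|x|/\sqrt t\le R$.

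The limit interchange is routine; the only point needing genuine care---and the main obstacle---is the behaviour of $G$ near $r=0$, where an atom or concentration of $u_0$ at the origin makes $G$ blow up. This is harmless because the weight $s^{\N+1}$ vanishes fast enough there, which is precisely what makes the majorant above integrable, and I would record that bound explicitly so that the dominated-convergence step (and the uniform-in-$t$ control) is fully justified.
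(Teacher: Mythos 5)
Your proposal is correct in substance, and it reaches the four conclusions by a genuinely different route from the paper. The paper works directly on the Gaussian integral: for (i)--(ii) it splits $\int_{\R^\N}\e^{-|x|^2/4t}\,\d|u_0|$ over the dyadic annuli $\{2^k\le|x|\le 2^{k+1}\}$, bounds each piece using the hypothesis, and compares the resulting sum term by term with $\int_{\R^\N}\e^{-|x|^2/4t}\,\d x$, obtaining $|u(0,t)|\le 2^{\N+1}M$ (and, for (ii), splitting off a compact piece that decays like $t^{-\N/2}$); for (iii)--(iv) it simply restricts the integral to $\{|x|\le\sqrt t\}$, where $\e^{-|x|^2/4t}\ge\e^{-1/4}$. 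You instead prove the exact identity
\begin{displaymath}
u(0,t,|u_0|)=\pi^{-\N/2}\int_0^\infty 2s^{\N+1}\e^{-s^2}\,G(2\sqrt t\,s)\,\d s,\qquad G(r)=r^{-\N}\int_{|y|\le r}\d|u_0(y)|,
\end{displaymath}
and then apply dominated convergence ((i)--(ii)) and Fatou ((iii)--(iv)). This is tidier and more unified: all four parts become statements about a single weighted average of $G$ at scale $\sqrt t$, the constants come out sharper (in (iii) you get $\liminf_{t\to\infty}u(0,t)\ge\pi^{-\N/2}\Gamma(\N/2+1)\liminf_{R\to\infty}G(R)$, versus the paper's $(4\pi)^{-\N/2}\e^{-1/4}$ factor), and the dyadic summation the paper performs inside its estimate appears in your argument only once, when converting the annular hypotheses into $F(R)\le c(1+R^\N)$ and $G(R)\to0$. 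The price is a little more machinery (the Stieltjes/Tonelli manipulation and the tracking of $G$ near $r=0$).

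One assertion should be corrected rather than glossed: in (i) you say the ``global bound on $G$'' makes the integral bounded uniformly in $t$, but your own bound $G(r)\le F(1)r^{-\N}+B$ gives uniformity only for $t\ge t_0>0$, since the singular term contributes $\sim F(1)\,t^{-\N/2}$. This restriction cannot be removed: if $|u_0|$ has an atom at the origin, the hypothesis of (i) still holds (the annuli never see $x=0$), yet $u(0,t)\ge|u_0|(\{0\})\,(4\pi t)^{-\N/2}\to\infty$ as $t\to0^+$, so uniform boundedness on the whole region $\{|x|\le R\sqrt t\}$ is false as literally stated. The paper's own proof has the identical defect --- its opening identity $\int_{\R^\N}\e^{-|x|^2/4t}\,\d|u_0|=\sum_k\int_{2^k\le|x|\le2^{k+1}}\e^{-|x|^2/4t}\,\d|u_0|$ silently discards any mass at the origin --- so this is a blemish of the statement rather than of your approach; indeed you are the more careful of the two in flagging it. What your argument genuinely proves (and all that is true in general) is that $u(0,t,|u_0|)$ is bounded uniformly on $[t_0,\infty)$ for every $t_0>0$, whence $u\in L^\infty_\loc(\R^\N\times(0,\infty))$ and uniform bounds on $\{|x|\le R\sqrt t,\ t\ge t_0\}$ via Proposition \ref{prop:estimates_from_u(0;t)}, together with the full claims of (ii)--(iv), which concern only $t\to\infty$; state that restriction explicitly in (i).
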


\begin{proof} %

(i) Consider
\begin{displaymath}   
  |u(0,t)|=\frac{1}{(4\pi t)^{d/2}} \int_{\R^d}\e^{-|x|^2/t} \,\d
          |u_0(x)| =\frac{1}{(4\pi t)^{d/2}} \sum_{k=-\infty}^\infty \int_{2^k\le|x|\le
  2^{k+1}} \e^{-|x|^2/t} \,\d |u_0(x)| .
\end{displaymath} 
Note that
\begin{align*}
\int_{R\le |x|\le 2R} \e^{-|x|^2/4t} \,\d |u_0(x)| & \le \e^{-R^2/4t}
                                     \int_{R\le|x|\le 2R} \,\d  |u_0(x)|\\
&\le(2R)^dM \e^{-R^2/4t} \le 2^{d+1} M \int_{R/2\le |x|\le R}
  \e^{-|x|^2/4t} \, \d x ,
\end{align*}
and so
\begin{align*}
|u(0,t)|&\le 2^{d+1}M \, \frac{1}{(4\pi t)^{d/2}}
          \sum_{k=-\infty}^\infty \int_{2^{k-1}\le|x|\le
          2^k}\e^{-|x|^2/4t} \, \d x\\
&= 2^{d+1}M\,\frac{1}{(4\pi t)^{d/2}} \int_{\R^d}\e^{-|x|^2/4t } \, \d
  x = 2^{d+1}M.
\end{align*}

(ii) Given $\eps>0$ take $k_0>0$ such that
$$
\frac{1}{R^d} \int_{R/2\le|x|\le R} \, \d |u_0(x)| <\eps
$$
for all $R\ge R_0:=2^{k_0}$.

Then we can split the domain of integration in the integral expression
for $|u(0,t)|$ into $|x|\le R_0$ and $|x|>R_0$. The above argument
shows that the integral over the unbounded region contributes at most
$2^{d+1}\eps$ for all $t>0$, while the integral over the bounded
region contributes no more than
$$
\frac{1}{(4\pi t)^{d/2}} \int_{|x|\le R_0} \, \d |u_0(x)| \le \frac{c}{t^{d/2}}.
$$
It follows that $|u(0,t)|\to0$ as $t\to\infty$.

(iii) There exists an $R_0$ and $m>0$ such that
$$
\frac{1}{R^d} \int_{|x|\le R} \, \d u_0(x) \ge m>0
$$
for all $R\ge R_0$. Then for $t$ sufficiently large such that $\sqrt t>R_0$ we have
\begin{displaymath}
  u(0,t)\ge\frac{1}{(4\pi t)^{d/2}} \int_{|x|\le\sqrt t} \e^{-|x|^2/4t}
        \, \d u_0(x) \ge\frac{1}{(4\pi t)^{d/2}}\e^{-1/4}mt^{d/2}=\frac{1}{(4\pi)^{d/2}}\e^{-1/4} m.
\end{displaymath}

(iv) We repeat the above argument, taking $m$ arbitrary.\end{proof}

\subsection{Wild behaviour of solutions as $t\to\infty$.}
\label{sec:wild-behav-solut}

Theorem \ref{astti} gives conditions on the averages over balls to distinguish between various time-asymptotic regimes; but in the case that
$$
\liminf_{R\to\infty} \frac{1}{R^d}\int_{|x|\le R} \, \d u_0(x) >0
$$
and
$$
\frac{1}{R^d} \int_{|x|\le R}  \, \d u_0(x) \not\to\infty \qquad\mbox{as}\quad R\to\infty
$$
some very rich behaviour is possible.

In the following theorem we show that there is initial data in
$L^1_0(\R^\N)$ that gives rise to unbounded oscillating solutions. For
bounded oscillations in the case of bounded initial data and solutions, see
also Section \ref{sec:rescalling-appr-vazq} below and \cite{VZ2002}.

\begin{theorem} \label{thr:oscillation_at_0}

For any sequence of non-negative numbers $\{\alpha_{k}\}_{k}$ there exists a non-negative
 $u_{0} \in L^{1}_{0}(\R^\N)$ and a sequence
$t_{n}\to \infty$ such that for every $k$ there exists a subsequence
$t_{k,j}$ with  $u(0,t_{k,j}) \to \alpha_{k}$ as $j\to \infty$.

\end{theorem}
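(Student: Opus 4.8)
The plan is to concentrate the mass of $u_0$ on thin spherical shells at a rapidly increasing sequence of radii $R_n\to\infty$, exploiting the fact that a shell at radius $R$ influences the value $u(0,t)$ only near the single time-scale $t\sim R^2$. Concretely, a unit of mass placed near $|x|=R$ contributes to $u(0,t)$ an amount $\approx g_R(t)$, where $g_R(t):=(4\pi t)^{-\N/2}\e^{-R^2/4t}$; this function has a unique maximum at $t=R^2/2\N$, where $g_R\sim R^{-\N}$, and decays to zero both as $t\to0$ and as $t\to\infty$. Thus, by choosing the $R_n$ to grow fast enough the shells become ``time-disjoint'': at the time $t_n:=R_n^2/2\N$ adapted to the $n$-th shell, every other shell contributes a negligible amount, so that $u(0,t_n)$ is governed by the single $n$-th shell and can be tuned to any prescribed value by choosing its mass.

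To hit every $\alpha_k$ infinitely often I first reshuffle the targets: pick a sequence $(\beta_n)_n$ in which each $\alpha_k$ appears infinitely many times (a diagonal enumeration of $\NN\times\NN$). The goal then becomes to construct $u_0$ and times $t_n\to\infty$ with $|u(0,t_n)-\beta_n|\to0$; for each fixed $k$ the subsequence $\{t_n:\beta_n=\alpha_k\}$ does the job. I would take $u_0=\sum_n \frac{m_n}{|A_n|}\Chi_{A_n}$, where $A_n=\{R_n\le|x|\le R_n+\eta_n\}$ are disjoint thin annuli and $m_n:=\beta_n/g_{R_n}(t_n)=c_\N\,\beta_n R_n^{\N}$. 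For a shell this thin, $g_{|x|}(t_n)$ is nearly constant on $A_n$, so the $n$-th shell alone contributes within $2^{-n}$ of $\beta_n$ to $u(0,t_n)$ once $\eta_n$ is small enough.

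The radii are chosen inductively, each $R_n$ taken so large (and $\ge n$) that three families of finitely many constraints hold. First, for integrability I require $\beta_n R_n^{\N}\le\e^{R_n}$, which forces $\sum_n m_n\e^{-\eps R_n^2}\le\sum_n \e^{R_n-\eps R_n^2}<\infty$ for every $\eps>0$; since $|x|\ge R_n$ on $A_n$ this sum bounds $\int\e^{-\eps|x|^2}\,u_0$, giving $u_0\in L^1_0(\R^\N)$. Second, the contributions of the already-chosen smaller shells $\ell<n$ to $u(0,t_n)$ total less than $2^{-n}$, which is possible because each is $\le m_\ell(4\pi t_n)^{-\N/2}\sim \beta_\ell (R_\ell/R_n)^{\N}\to0$ as $R_n\to\infty$. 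Third — the condition imposed at stage $n$ for the benefit of earlier times — the $n$-th shell contributes less than $2^{-n}$ to each of $u(0,t_1),\dots,u(0,t_{n-1})$; since its contribution $m_n g_{R_n}(t_\ell)\sim \beta_n R_n^{\N}\e^{-R_n^2/4t_\ell}\to0$ for each fixed $t_\ell$, this again holds for $R_n$ large, and it guarantees that the tail of larger shells contributes at most $\sum_{m>n}2^{-m}=2^{-n}$ to any fixed $u(0,t_n)$.

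Putting these together, at each $t_n$ one has $u(0,t_n)=\beta_n+O(2^{-n})$, the error coming from the shell thickness and from the at-most-$2^{-n+1}$ total interference of all other shells; hence $u(0,t_n)-\beta_n\to0$ with $t_n\to\infty$, which yields the claim along each subsequence $\{t_n:\beta_n=\alpha_k\}$ (including the case $\alpha_k=0$, where $m_n=0$ and only the vanishing interference remains). The \emph{main obstacle} is precisely this bookkeeping of interference: because there are infinitely many shells and the targets $\beta_n$ (hence the masses $m_n$) may be unbounded, one must check that both the finitely many smaller shells and the infinitely many larger shells can be suppressed simultaneously at every $t_n$. This is what dictates the order of the induction — each $R_n$ is fixed only after $\beta_n$ is revealed and after $R_1,\dots,R_{n-1},t_1,\dots,t_{n-1}$ are known — so that every constraint reduces to choosing a single new radius large enough, using the rapid Gaussian decay of $g_R(t)$ away from its peak.
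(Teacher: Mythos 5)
Your proposal is correct and takes essentially the same route as the paper: the identical reduction (reshuffling the targets so that each $\alpha_k$ appears infinitely often, then hitting a single sequence $\beta_n$ along times $t_n\to\infty$), followed by initial data supported on disjoint spherical annuli at recursively chosen, rapidly growing radii, with the Gaussian decay of the kernel controlling the interference of the finitely many earlier annuli and the infinitely many later ones at each $t_n$. The only difference is one of implementation: the paper uses multiplicatively wide annuli of height $b_j$, so that after the self-similar rescaling the $j$-th annulus captures essentially the whole Gaussian integral, whereas you use thin shells whose masses are tuned so that the peak of the single-shell influence function equals the target value — both versions of the bookkeeping go through.
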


In fact  it is enough to prove the following  (apparently weaker)
result.

\begin{proposition} \label{prop:oscillations}
  Given any  sequence of non-negative numbers $\{b_{k}\}_{k}$ there exists a non-negative
 $u_{0} \in L^{1}_{0}(\R^\N)$ and a sequence $t_{k}\to \infty$ such
 that, as $k\to \infty$
 \begin{displaymath}
   |u(0,t_{k}) -b_{k}| \to 0 .
 \end{displaymath}
\end{proposition}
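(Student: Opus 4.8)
The plan is to exploit the explicit formula
\[
u(0,t)=\frac{1}{(4\pi t)^{\N/2}}\int_{\R^\N}\e^{-|y|^2/4t}\,\d u_0(y),
\]
which by \eqref{eq:value_at_zero_and_normMeps} is finite for every $t>0$ once we know $u_0\in\mathcal{M}_0(\R^\N)$ (Proposition \ref{prop:global_existence_L10}), and to build $u_0$ as a superposition of thin, well separated spherical shells carrying carefully chosen masses. The key observation is that the Gaussian weight $\e^{-|y|^2/4t}$ is essentially $\e^{-R^2/4t}$ on the sphere $|y|=R$; so, choosing radii $R_k\uparrow\infty$ and reading the solution off at the \emph{resonant} times $t_k:=R_k^2/4$, I would arrange that at time $t_k$ the shell of radius $R_k$ contributes a definite amount, tuned to equal $b_k$, while every other shell is made negligible by letting the radii grow fast enough.

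Concretely, I would take disjoint annuli $A_k=\{y:\ R_k\le|y|\le R_k+1\}$ with $R_k+1<R_{k+1}$ and set $u_0=\sum_k c_k\Chi_{A_k}$ with $c_k\ge0$. Writing $S_j(t)=\frac{c_j}{(4\pi t)^{\N/2}}\int_{A_j}\e^{-|y|^2/4t}\,\d y$ for the contribution of the $j$-th shell, we have $u(0,t)=\sum_j S_j(t)$, and $S_j(t)$ is comparable to $\frac{m_j}{(4\pi t)^{\N/2}}\e^{-R_j^2/4t}$, where $m_j=c_j|A_j|$ is the mass of the shell. At $t=t_k$ I would first fix $c_k$ (taking $c_k=0$ when $b_k=0$) so that $S_k(t_k)=b_k$ exactly; since $\e^{-|y|^2/4t_k}\approx\e^{-1}$ on $A_k$, this forces $m_k$ to be of order $b_k(\pi R_k^2)^{\N/2}$, i.e. polynomial in $R_k$ for fixed $b_k$.

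The radii are then chosen inductively, each $R_k$ taken so large (given $R_1,\dots,R_{k-1}$ and the prescribed $b_k$) that three requirements hold: (a) the inner shells are diffuse at the late time $t_k$, i.e. $\sum_{j<k}S_j(t_k)\le\frac{1}{(\pi R_k^2)^{\N/2}}\sum_{j<k}m_j\le 2^{-k}$; (b) the new shell is still invisible at all earlier times, i.e. $S_k(t_{k'})\le 2^{-k}$ for every $k'<k$, which is possible because $S_k(t_{k'})\lesssim\frac{m_k}{(4\pi t_{k'})^{\N/2}}\e^{-R_k^2/4t_{k'}}\to0$ as $R_k\to\infty$ (the Gaussian beats the polynomial growth of $m_k$); and (c) $R_k^2\ge k\log(b_k+2)$, which secures membership in $L^1_0(\R^\N)$. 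Applied at stage $j$, requirement (b) gives $S_j(t_k)\le 2^{-j}$ for every $k<j$, so the outer shells are controlled at time $t_k$ by $\sum_{j>k}S_j(t_k)\le\sum_{j>k}2^{-j}=2^{-k}$.

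With these choices $\int_{\R^\N}\e^{-\eps|y|^2}u_0\,\d y\le\sum_k m_k\e^{-\eps R_k^2}$, and since $\log m_k=\log b_k+O(\log R_k)=o(R_k^2)$ by (c), each term decays faster than geometrically; hence $u_0\in L^1_\eps(\R^\N)$ for every $\eps>0$, so $u_0\in L^1_0(\R^\N)$ and $u$ is globally defined. Splitting $u(0,t_k)=S_k(t_k)+\sum_{j<k}S_j(t_k)+\sum_{j>k}S_j(t_k)=b_k+O(2^{-k})$ then gives $|u(0,t_k)-b_k|\to0$ with $t_k=R_k^2/4\to\infty$, as required. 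The only genuine difficulty is the \emph{simultaneous} control of the two tails — diffuse inner shells at the late time $t_k$ against sharply peaked outer shells that must not yet be felt at $t_k$ — while keeping $u_0\in L^1_0$ even when the $b_k$ are unbounded; all three are resolved by choosing $R_k$ to grow sufficiently fast at each step, which is legitimate precisely because $\e^{-R_j^2/4t_k}$ decays faster than any power of $R_j$.
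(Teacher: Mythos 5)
Your proof is correct, and its overall strategy coincides with the paper's: radial initial data built from disjoint annular bumps with inductively chosen, rapidly growing radii; resonant observation times $t_k$ of order $R_k^2$; a splitting of $u(0,t_k)$ into inner, resonant and outer parts, with both tails forced below $2^{-k}$ by largeness conditions imposed at each step of the induction; and a verification of $u_0\in L^1_0(\R^\N)$ resting on the fact that the quadratic-exponential weight beats masses growing only polynomially in $R_k$. Where you genuinely differ is the mechanism that produces $b_k$ at the resonant time. The paper uses multiplicatively wide annuli $\lambda_k A(r_k)$ with $A(r)=\{r^{-1}<|y|<r\}$ and amplitude exactly $b_k$: after the change of variables $y=\lambda_k z$, the $k$-th annulus becomes $A(r_k)$, which captures all but $O(2^{-k})$ of the Gaussian mass, so no tuning is needed, at the cost of managing two parameter sequences ($\lambda_k$ and $r_k$). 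You instead use thin shells of fixed width and tune the amplitude $c_k\sim b_k R_k$ so that the resonant contribution equals $b_k$ exactly; this buys a single parameter sequence and an exact resonant identity, at the price that the amplitudes are no longer the $b_k$ themselves but grow with $R_k$. Your induction correctly absorbs this extra polynomial factor both in the earlier-time invisibility bound and in the $L^1_0$ estimate, since $\e^{-R_k^2/4t_{k'}}$ and $\e^{-\eps R_k^2}$ decay faster than any power of $R_k$; the one point to read carefully is the order of quantifiers in your step (b) — $c_k$ is a function of $R_k$, so (b) is a condition on $R_k$ with $c_k(R_k)$ substituted in — and your closing remark about the Gaussian beating the polynomial growth of $m_k$ is exactly what justifies it.
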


Indeed, given a sequence $\{\alpha_{k}\}_{k}$ as in the statement of Theorem \ref{thr:oscillation_at_0} construct the sequence $\{b_{k}\}_{k}$ as
\begin{displaymath}
  \alpha_{1} | \alpha_{1} ,\alpha_{2} |  \alpha_{1},  \alpha_{2} ,
  \alpha_{3} |  \alpha_{1},\ldots,  \alpha_{4} |   \alpha_{1}, \ldots,
  \alpha_{5} |  \ldots .
\end{displaymath}
Now apply Proposition \ref{prop:oscillations} and note that for any $k\in \NN$ there is
a subsequence ${k_j}$ such that
$b_{k_{j}} = \alpha_{k}$.

 We now prove Proposition \ref{prop:oscillations}, inspired by the
 proof of Lemma 6 in \cite{VZ2002}.

\begin{proof}
Observe, with
Proposition \ref{prop:estimates_from_u(0;t)} in mind, that we can
write
\begin{displaymath}
  u(0,t) = \frac{1}{(4\pi t)^{\N/2}} \int_{\R^\N} \e^{-\frac{|y|^2}{4t}}
u_0(y) \,\d y = \frac{1}{\pi^{\N/2}} \int_{\R^\N} \e^{-|z|^2}
u_0(2\sqrt{t} z) \, \d z .
\end{displaymath}
So if $\lambda_{n} \to \infty$,
\begin{equation} \label{eq:u(0,tn)}
  u(0,t_{n}) = \frac{1}{\pi^{\N/2}} \int_{\R^\N} \e^{-|z|^2}
u_0(\lambda_{n} z) \, \d z .
\end{equation}
with $\lambda_{n} = 2\sqrt{t_{n}}$. We set $c_{\N} =:
\frac{1}{\pi^{\N/2}}$.

 Consider for $r>1$  the annulus
 $A(r) =\{y, \ r^{-1} < |y|< r\}$
and given the sequence  $\{b_{k}\}_{k}$ consider a
  function
  \begin{displaymath}
    u_{0}(x) = \sum_{j} b_{j}  \Chi_{\lambda_{j} A(r_{j}) }(x)
  \end{displaymath}
for some increasing and divergent sequences $\{\lambda_{k}\}_{k}$, $\{r_{k}\}_{k}$
chosen recursively as follows: 
first we  choose $r_{k}$ large
with respect to the sequence $\{b_k\}_{k}$, according to
\begin{equation} \label{eq:condition_4_near_origin}
2^{k}  \beta_{k-1} < r_{k-1}^{\N}
\end{equation}
where $\beta_{k} =\max_{1\leq j\leq k} \{ b_{k}\}$  and
\begin{equation} \label{eq:condition_4_mid_term}
  b_{k} c_{\N}\int_{\R^\N \setminus A(r_{k})} \e^{-|x|^2} \,\d x <
  2^{-k} .
\end{equation}
Then choose $\lambda_{k}$ sufficiently large that
\begin{equation} \label{eq:condition_4_u0_in_L10}
2^{k}   b_{k}   r_{k}^{3\N} < \lambda_{k}^{\N}
\end{equation}
and
\begin{equation}  \label{eq:condition_4_far_away}
  b_{k}   \exp\left(-\frac{\lambda_{k}^{2}}{\lambda_{k-1}^{2}r_{k}^{2}}\right)
  \lambda_{k}^{\N}  r_{k}^{\N}  < 2^{-k} .
\end{equation}
Finally choose the
next value $\lambda_{k+1}$ large enough that
\begin{equation} \label{eq:disjoint_annulus}
  \lambda_{k} r_{k} < \frac{\lambda_{k+1}}{r_{k+1}} .
\end{equation}

\noindent {\it Step 1.} Observe that from (\ref{eq:disjoint_annulus})
the scaled annulae $\lambda_{j} A(r_{j})$ are disjoint and increasing.

\noindent {\it Step 2.} Now we prove that from
(\ref{eq:condition_4_u0_in_L10}), 
we get  $u_{0} \in L^{1}_{0}(\R^\N)$.
For this take  any $\eps >0$ and then
\begin{displaymath}
   \int_{\R^\N} \e^{-\eps|x|^{2}} u_{0}(x) \,\d x = \sum_{j} b_{j}
   \int_{\lambda_{j} A(r_{j}) } \e^{-\eps |x|^{2}} \,\d x \leq  \sum_{j}
   b_{j} \e^{-\eps \lambda_{j}^{2}/r_{j}^{2}} \lambda_{j}^{\N}
   r_{j}^{\N} .
\end{displaymath}
Now for any $m\in \NN$ there exist $R_{\eps}, c_{\eps}$ (depending on
$m$ as well)  such that if $z\geq R_{\eps}$ then
\begin{displaymath}
  \e^{-\eps z} \leq \frac{c_{\eps}}{z^{m}} .
\end{displaymath}
Since from (\ref{eq:disjoint_annulus}) we get
$\frac{\lambda_{k}}{r_{k}}  \to \infty$, then for some $j_{0} \in
\NN$,  we get
\begin{displaymath}
   \sum_{j\geq j_{0}}
   b_{j} \e^{-\eps \lambda_{j}^{2}/r_{j}^{2}} \lambda_{j}^{\N}
   r_{j}^{\N} \leq c_{\eps} \sum_{j\geq j_{0}}   b_{j}
   \frac{r_{j}^{2m+\N}}{\lambda_{j}^{2m-\N}} .
\end{displaymath}
For example with $m=\N$ we get, by (\ref{eq:condition_4_u0_in_L10}),
\begin{displaymath}
c_{\eps} \sum_{j\geq j_{0}}   b_{j}
   \frac{r_{j}^{3\N}}{\lambda_{j}^{\N}} \leq c_{\eps} \sum_{j\geq j_{0}}
   2^{-j} < \infty .
\end{displaymath}

\noindent {\it Step 3.} Now we prove that from
(\ref{eq:condition_4_near_origin}), (\ref{eq:condition_4_mid_term})
and (\ref{eq:condition_4_far_away}) 
then $|u(0,t_{k}) -b_{k}| \to 0$.

For this  observe that for any $\lambda>0$
 \begin{displaymath}
    u_{0}(\lambda x) = \sum_{j} b_{j}  \Chi_{\lambda_{j}\lambda^{-1} A(r_{j}) }(x)
  \end{displaymath}
and then for each $k$ we have in (\ref{eq:u(0,tn)})
\begin{align}
  \int_{\R^\N} \e^{-|z|^2} u_0(\lambda_{k} z) \, \d z =
  \sum_{1\leq j\leq k-1} b_{j}
   \int_{{\lambda_{j}}{\lambda_{k}^{-1}} A(r_{j}) }& \e^{- |z|^{2}}
   \, \d z +  b_{k}
   \int_{A(r_{k}) } \e^{- |z|^{2}}
   \, \d z\nonumber\\
   & + \sum_{j\geq k+1} b_{j}
   \int_{{\lambda_{j}}{\lambda_{k}^{-1}} A(r_{j}) } \e^{- |z|^{2}}
   \, \d z. \label{eq:2_estimate_u(0)}
\end{align}

Then the first term in (\ref{eq:2_estimate_u(0)})  is bounded by
\begin{displaymath}
\beta_{k-1}   \frac{\lambda_{k-1}^{\N}}{\lambda_{k}^{\N}} r_{k-1}^{\N}
<\frac{\beta_{k-1}} {r_{k-1}^{\N}} < 2^{-k}
\end{displaymath}
by (\ref{eq:condition_4_near_origin}), where we used (\ref{eq:disjoint_annulus}).

For the second term in (\ref{eq:2_estimate_u(0)}) observe that by (\ref{eq:condition_4_mid_term})
\begin{displaymath}
\left | b_{k}   \int_{A(r_{k}) } \e^{- |z|^{2}}   \, \d z -b_{k} \right |
< \frac{1}{2^{k} c_{\N}}   .
\end{displaymath}

Finally,  observe that the third term in (\ref{eq:2_estimate_u(0)}) is bounded by
\begin{displaymath}
\sum_{j\geq k+1} b_{j}   \exp\left(-
  \frac{\lambda_{j}^{2}}{\lambda_{k}^{2}r_{j}^{2}}\right)
  \frac{\lambda_{j}^{\N}}{\lambda_{k}^{\N}} r_{j}^{\N} \leq \sum_{j\geq k+1} b_{j}   \exp\left(-
  \frac{\lambda_{j}^{2}}{\lambda_{j-1}^{2}r_{j}^{2}}\right)
  \lambda_{j}^{\N}  r_{j}^{\N}  \leq \sum_{j\geq k+1} 2^{-j}
\end{displaymath}
by (\ref{eq:condition_4_far_away}).

Then, from (\ref{eq:u(0,tn)}) and the bounds above on the three terms
in (\ref{eq:2_estimate_u(0)}) we get, with $\lambda_{k} = 2\sqrt{t_{k}}$
\begin{displaymath}
  \big| u(0,t_{k}) -b_{k} \big | \leq \frac{c_{\N}}{2^{k}} +
  \frac{1}{2^{k}} + c_{\N} \sum_{j\geq k+1} 2^{-j}  \to 0, \qquad k \to
  \infty.\qedhere
\end{displaymath}
\end{proof}

The next result shows that the oscillatory behavior in Theorem
\ref{thr:oscillation_at_0} is somehow generic for heat solutions. For
this, given a sequence of positive numbers $\alpha=
\{\alpha_{k}\}_{k}$ denote $\mathscr{O}_{\alpha}$ the nonempty family
of $0\le u_{0} \in L^{1}_{0}(\R^\N)$ that satisfy the statement in Theorem
\ref{thr:oscillation_at_0}.

We use the topology on $L^1_0(\R^\N)$ generated by the family of
$L^1_\eps(\R^{\N})$ norms defined in \eqref{eq:norm_L1eps}, which
makes $L^1_0(\R^\N)$ into a Fr\'echet space (see \cite{RR2} for more details); the following more
explicit definition is sufficient for our statement of the following
theorem: we say that $u_n\to u_0$ in $L^1_0(\R^\N)$ if and only if
$u_n\to u_0$ in $L^1_\eps(\R^\N)$ for every $\eps>0$.  Note that, in
particular, such convergence implies that $u_n\to u_0$ in
$L^1_\loc(\R^\N)$.

\begin{theorem} \label{thr:density_of_oscillations}
 For any sequence of positive numbers $\alpha=
\{\alpha_{k}\}_{k}$, 
$\mathscr{O}_{\alpha}$ is dense in  $L^{1}_0(\R^\N)$.

\end{theorem}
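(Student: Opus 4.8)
The plan is to reduce the density statement to approximation in a single seminorm, and then to realise the approximant by perturbing a compactly supported truncation of the target by a far-field ``oscillation generator'' built exactly as in Proposition \ref{prop:oscillations}. For the reduction, note that by \eqref{eq:increase_Meps_norm}, if $\eps_{0}=\min\{\eps_{1},\dots,\eps_{n}\}$ then $\|\mu\|_{L^{1}_{\eps_{i}}(\R^\N)}\le(\eps_{i}/\eps_{0})^{\N/2}\|\mu\|_{L^{1}_{\eps_{0}}(\R^\N)}$ for each $i$, so every basic neighbourhood of the Fr\'echet topology contains one of the form $\{u:\ \|u-w\|_{L^{1}_{\eps_{0}}(\R^\N)}<\delta\}$. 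Since $\mathscr{O}_{\alpha}$ consists of non-negative functions and convergence in $L^{1}_{0}(\R^\N)$ forces convergence in $L^{1}_{\loc}(\R^\N)$, it suffices to fix a non-negative $w\in L^{1}_{0}(\R^\N)$ together with $\eps_{0}>0$ and $\delta>0$, and to produce $u_{0}\in\mathscr{O}_{\alpha}$ with $\|u_{0}-w\|_{L^{1}_{\eps_{0}}(\R^\N)}<\delta$.

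For the construction, fix $R>0$ so large that the truncation $w_{R}:=w\Chi_{B(0,R)}$ satisfies $\|w-w_{R}\|_{L^{1}_{\eps_{0}}(\R^\N)}<\delta/2$; this is possible because $\e^{-\eps_{0}|x|^{2}}w\in L^{1}(\R^\N)$. Let $\{b_{k}\}_{k}$ be the reshuffling of $\{\alpha_{k}\}_{k}$ used to deduce Theorem \ref{thr:oscillation_at_0} from Proposition \ref{prop:oscillations}, and run the recursive construction of that proposition to obtain $\psi=\sum_{j}b_{j}\Chi_{\lambda_{j}A(r_{j})}$, imposing in the choice of each $\lambda_{j}$ the two extra ``$\lambda_{j}$ large'' requirements that $\lambda_{j}/r_{j}>R$ (so that every annulus lies outside $B(0,R)$) and that $\|b_{j}\Chi_{\lambda_{j}A(r_{j})}\|_{L^{1}_{\eps_{0}}(\R^\N)}<\delta\,2^{-j-1}$; the latter is achievable since this norm is at most a dimensional constant times $\eps_{0}^{\N/2}b_{j}\e^{-\eps_{0}\lambda_{j}^{2}/r_{j}^{2}}\lambda_{j}^{\N}r_{j}^{\N}$, which tends to $0$ as $\lambda_{j}\to\infty$ because $\lambda_{j}/r_{j}\to\infty$. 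These conditions are compatible with \eqref{eq:condition_4_near_origin}--\eqref{eq:disjoint_annulus}, so Step 2 of the proposition still gives $\psi\in L^{1}_{0}(\R^\N)$ and Step 3 still gives $|u(0,t_{k},\psi)-b_{k}|\to0$ with $\lambda_{k}=2\sqrt{t_{k}}$.

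Now set $u_{0}:=w_{R}+\psi\ge0$. Then $u_{0}\in L^{1}_{0}(\R^\N)$, and by construction $\|u_{0}-w\|_{L^{1}_{\eps_{0}}(\R^\N)}\le\|w_{R}-w\|_{L^{1}_{\eps_{0}}(\R^\N)}+\sum_{j}\|b_{j}\Chi_{\lambda_{j}A(r_{j})}\|_{L^{1}_{\eps_{0}}(\R^\N)}<\delta/2+\delta/2=\delta$. It remains to check $u_{0}\in\mathscr{O}_{\alpha}$, and this is where the truncation does its work: since $w_{R}\in L^{1}(\R^\N)$ has compact support,
\[
u(0,t,w_{R})=\frac{1}{(4\pi t)^{\N/2}}\int_{B(0,R)}\e^{-|y|^{2}/4t}w_{R}(y)\,\d y\le\frac{\|w_{R}\|_{L^{1}(\R^\N)}}{(4\pi t)^{\N/2}}\to0\quad\text{as }t\to\infty.
\]
Hence, by linearity of $S(t)$, $u(0,t_{k},u_{0})=u(0,t_{k},w_{R})+u(0,t_{k},\psi)$ satisfies $|u(0,t_{k},u_{0})-b_{k}|\to0$, and the reshuffling argument following Proposition \ref{prop:oscillations} yields the conclusion of Theorem \ref{thr:oscillation_at_0} for $u_{0}$; thus $u_{0}\in\mathscr{O}_{\alpha}$.

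I expect the main obstacle to be precisely this last point: a perturbation small in $L^{1}_{0}(\R^\N)$ could still spoil the delicate limits $u(0,t_{k})\to b_{k}$, since these concern the value of the solution at $x=0$ for large $t$ rather than the size of the data. The device that removes the obstacle is to perturb the target only outside a fixed ball and to truncate it to compact support, so that the target's contribution to $u(0,t)$ decays like $t^{-\N/2}$ and is invisible in the limit, while the oscillation generator is simultaneously pushed far enough out that it is negligible in the $L^{1}_{\eps_{0}}(\R^\N)$ norm.
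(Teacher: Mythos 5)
Your proof is correct, and it rests on the same central device as the paper's: an approximant that agrees with the target inside a large ball and consists of oscillating annuli far outside it, together with the observation that a compactly supported summand contributes only $O(t^{-\N/2})$ to $u(0,t)$ and therefore cannot disturb the limits $u(0,t_{k})\to b_{k}$. The execution differs, though. The paper fixes the oscillator once and for all: with $U_{0}$ the datum of Theorem \ref{thr:oscillation_at_0}, it sets $v_{0}^{n}=v_{0}\Chi_{B(0,n)}+U_{0}\Chi_{\R^\N\setminus B(0,n)}$ and lets $n\to\infty$; smallness of $v_{0}^{n}-v_{0}=(v_{0}-U_{0})\Chi_{\R^\N\setminus B(0,n)}$ in \emph{every} $L^{1}_{\eps}(\R^\N)$ norm is then automatic by dominated convergence, since both $v_{0}$ and $U_{0}$ lie in $L^{1}_{\eps}(\R^\N)$ for all $\eps>0$, so no reduction to a single seminorm and no re-tuning of the recursive construction is needed. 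You instead fix the ball and rebuild the oscillator: you reduce via \eqref{eq:increase_Meps_norm} to one seminorm $L^{1}_{\eps_{0}}$ and re-run the recursion of Proposition \ref{prop:oscillations} with two extra ``$\lambda_{j}$ large'' constraints (annuli outside $B(0,R)$, summably small $L^{1}_{\eps_{0}}$ norm), which obliges you to check compatibility with \eqref{eq:condition_4_u0_in_L10}--\eqref{eq:disjoint_annulus} — a check you carry out correctly, since all of those are also ``$\lambda_{j}$ large'' conditions. What your route buys is that it makes fully explicit the step the paper passes over quickly, namely \emph{why} the glued datum still belongs to $\mathscr{O}_{\alpha}$: you prove, rather than assert, that the compactly supported part is invisible in the large-time behaviour at $x=0$. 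You are also right to note that the theorem can only mean density within the non-negative cone (the paper's proof likewise only treats $0\le v_{0}$), since $L^{1}_{\loc}$ limits of non-negative functions are non-negative.
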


\begin{proof}
  Denote by $U_{0}$ the initial data constructed in Theorem
  \ref{thr:oscillation_at_0}. Then $U_{0} \in \mathscr{O}_{\alpha}$
  and for any $n\in {\NN}$,
  $U_{0} \Chi_{\R^\N \setminus B(0,n)} \in \mathscr{O}_{\alpha}$ since we are
  only suppressing a finite number of annulae in $U_{0}$.

Then for given $0\leq v_{0}   \in L^{1}_{0}(\R^\N)$ define
\begin{displaymath}
  v_{0}^{n} = v_{0} \Chi_{B(0,n)} + U_{0} \Chi_{\R^\N \setminus
    B(0,n)} \in \mathscr{O}_{\alpha}.
\end{displaymath}
Take $\eps>0$; since
$$
v_0^{n}-v_0=(v_0-U_0)\Chi_{\R^\N\setminus B(0,n)}
$$
we have
$$
\|v_0^{n}-v_0\|_{L^1_\eps(\R^\N)}=\left(\frac{\eps}{\pi}\right)^{\N/2}\int_{|x|\ge
  n}\e^{-\eps|x|^2}|v_0-U_0|;
$$
since $v_0,U_0\in L^1_0(\R^\N) \subset L^1_\eps(\R^\N)$, it follows
that $v_0^{n}\to v_0$ in $L^1_0(\R^\N)$.
\end{proof}

The following result shows that  any heat solution can be ``shadowed''
as close as we want, in any large time interval
and any large compact set by an oscillatory solution of the heat
equation.

\begin{theorem} \label{thr:shadowing}
For any sequence of positive numbers $\alpha=
\{\alpha_{k}\}_{k}$  and any $0\leq v_{0}   \in L^{1}_{0}(\R^\N)$, any
$\delta>0$ and $T >0$ and any compact set $K\subset \R^\N$,
there exists   $u_{0}\in \mathscr{O}_{\alpha}$  such that
\begin{displaymath}
 \sup_{K\times [0,T]} |u(x,t,v_{0}) - u(x,t ,u_{0})| \leq \delta .
\end{displaymath}

\end{theorem}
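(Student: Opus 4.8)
The plan is to reuse the very construction from the proof of Theorem~\ref{thr:density_of_oscillations} and show that the approximating datum already shadows $v_0$ on the prescribed space--time box. Let $U_0\in\mathscr{O}_\alpha$ be the oscillatory initial datum produced in Theorem~\ref{thr:oscillation_at_0}, and for $n\in\NN$ set
\[
u_0:=v_0\,\Chi_{B(0,n)}+U_0\,\Chi_{\R^\N\setminus B(0,n)}\ge0.
\]
First I would check that $u_0\in\mathscr{O}_\alpha$ for every $n$: the piece $U_0\,\Chi_{\R^\N\setminus B(0,n)}$ only suppresses finitely many of the disjoint annuli making up $U_0$ (their inner radii diverge by \eqref{eq:disjoint_annulus}), while the compactly supported correction $v_0\,\Chi_{B(0,n)}$ lies in $L^1(\R^\N)$, so its contribution to $u(0,t)$ is at most $(4\pi t)^{-\N/2}\|v_0\,\Chi_{B(0,n)}\|_{L^1(\R^\N)}\to0$ as $t\to\infty$. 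Hence the limits $u(0,t_{k,j})\to\alpha_k$ along the sequence of Theorem~\ref{thr:oscillation_at_0} are unaffected, exactly as argued in Theorem~\ref{thr:density_of_oscillations}.

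It then remains to estimate the difference. Since $v_0-u_0=g\,\Chi_{\R^\N\setminus B(0,n)}$ with $g:=v_0-U_0\in L^1_0(\R^\N)$, for $x\in\R^\N$ and $t>0$ we have
\[
|u(x,t,v_0)-u(x,t,u_0)|\le\frac{1}{(4\pi t)^{\N/2}}\int_{|y|\ge n}\e^{-|x-y|^2/4t}|g(y)|\,\d y.
\]
Writing $R_K:=\sup_{x\in K}|x|$ and fixing $0<\beta<1$, I would use \eqref{sqlower} in the form $|x-y|^2\ge(1-\beta)|y|^2-(\tfrac1\beta-1)|x|^2$ and then split $(1-\beta)|y|^2/4t$ into two equal halves, bounding one via $|y|\ge n$ and the other via $t\le T$. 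This controls the exponential by $\e^{-c_n/t}\,\e^{-\eps|y|^2}$, where $\eps:=(1-\beta)/8T>0$ and $c_n:=(1-\beta)n^2/8-(\tfrac1\beta-1)R_K^2/4$. For $n$ large $c_n>0$ and $c_n\to\infty$, so for $x\in K$ and $0<t\le T$
\[
|u(x,t,v_0)-u(x,t,u_0)|\le\Big(\sup_{s>0}\frac{\e^{-c_n/s}}{(4\pi s)^{\N/2}}\Big)\int_{\R^\N}\e^{-\eps|y|^2}|g(y)|\,\d y.
\]
The $y$-integral is finite because $g\in L^1_\eps(\R^\N)$, while the supremum equals $(4\pi)^{-\N/2}(\N/2c_n)^{\N/2}\e^{-\N/2}\to0$ as $n\to\infty$. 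At $t=0$ the difference vanishes on $K$ as soon as $n>R_K$, since $v_0-u_0$ is supported outside $B(0,n)$. Hence $\sup_{K\times[0,T]}|u(x,t,v_0)-u(x,t,u_0)|\to0$, and choosing $n$ large enough makes it $\le\delta$, giving the required $u_0\in\mathscr{O}_\alpha$.

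The main obstacle is the small-$t$ regime, where the prefactor $(4\pi t)^{-\N/2}$ blows up; the decisive trick is the splitting of $|y|^2$, which lets the far-field bound $|y|\ge n$ produce a factor $\e^{-c_n/t}$ that \emph{simultaneously} tames the $t^{-\N/2}$ singularity uniformly over $(0,T]$ and supplies the decay in $n$, while the remaining Gaussian weight $\e^{-\eps|y|^2}$ is precisely what the membership $g\in L^1_0(\R^\N)$ controls.
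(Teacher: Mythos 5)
Your proof is correct, and it rests on exactly the construction the paper uses: its proof of Theorem \ref{thr:shadowing} also takes $u_{0}=v_{0}\Chi_{B(0,R)}+U_{0}\Chi_{\R^\N\setminus B(0,R)}$ with $U_{0}$ from Theorem \ref{thr:oscillation_at_0}, asserts $u_{0}\in\mathscr{O}_{\alpha}$ by the same ``finitely many annuli plus a compactly supported $L^{1}$ correction'' reasoning, and exploits that $v_{0}-u_{0}$ is supported in $\{|y|\ge R\}$. The difference is in how the final estimate is organized. The paper first reduces to the origin: it claims it suffices to get $\sup_{[0,T+1]}u(0,t,|v_{0}-u_{0}|)\le\delta$, invoking \eqref{bound_above_x=0} from Lemma \ref{lem:estimate_from_x=0}, and then proves that one-point bound by splitting time into $(0,t_{0}]$ (where $t^{-\N/2}\e^{-(1-\alpha)/4t}\to0$) and $[t_{0},T+1]$ (where $R$ large makes the tail integral small). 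You instead estimate $|u(x,t,v_{0}-u_{0})|$ directly and uniformly over $K\times(0,T]$, using \eqref{sqlower} and then halving the term $(1-\beta)|y|^{2}/4t$ so that $|y|\ge n$ produces $\e^{-c_{n}/t}$ (which tames the $t^{-\N/2}$ singularity and supplies decay in $n$) while $t\le T$ produces the fixed weight $\e^{-\eps|y|^{2}}$ that $g\in L^{1}_{0}(\R^\N)$ integrates. Your route is in fact slightly tidier on one point: as displayed in the paper, the reduction step multiplies the bound $\delta$ by $\e^{|x|^{2}/4(a-1)t}$, which is unbounded as $t\to0$ for $x\ne0$, so the constant $C(K,T)$ is not literally finite; what rescues the paper's argument is the much stronger exponential smallness of $u(0,t,|v_{0}-u_{0}|)$ near $t=0$ obtained in its proof of \eqref{eq:shadow_at_0}. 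Your single two-parameter splitting makes that small-$t$ uniformity explicit at no extra cost.
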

\begin{proof}
Observe that it is enough to find  $u_{0}\in \mathscr{O}_{\alpha}$  such that
\begin{equation} \label{eq:shadow_at_0}
 \sup_{[0,T+1]}
 u(t,0,|v_{0} -u_{0}|) \leq \delta .
\end{equation}
In such a case, from (\ref{bound_above_x=0}) we would get, for any $a>1$
\begin{displaymath}
  \sup_{K\times [0,T]}  |u(x,t,v_{0}) - u(x,t,u_{0})| \le c_{\N,a}\,
   \sup_{K\times [0,T]}   u(0,at, |v_{0} -u_{0}|)  \e^{\frac{|x|^{2}}{4(a-1)t}} \leq C(K,T)
  \delta .
\end{displaymath}

  Denote by $U_{0} \in \mathscr{O}_{\alpha}$  the initial data constructed in Theorem
  \ref{thr:oscillation_at_0}. Then for given $0\leq v_{0}   \in L^{1}_{0}(\R^\N)$ define
\begin{displaymath}
  u_{0} = v_{0} \Chi_{B(0,R)} + U_{0} \Chi_{\R^\N \setminus
    B(0,R)} \in \mathscr{O}_{\alpha}.
\end{displaymath}
Then we show that  (\ref{eq:shadow_at_0}) holds provided we take $R$
large enough. For this, observe that
\begin{displaymath}
  |v_{0} -u_{0}| \leq |v_{0} -U_{0}|\Chi_{\R^\N \setminus B(0,R)}
\end{displaymath}
hence
\begin{equation} \label{eq:bound_at_0}
  0\leq  u(0,t,|v_{0} -u_{0}|) \leq \frac{1}{(4\pi t)^{\N/2}}
  \int_{|y|\geq R} \e^{-\frac{|y|^2}{4t}}
|v_0(y) -U_{0}(y)|  \,\d y .
\end{equation}

Taking $R>1$ we have, for any given $0<t_{0} < T+1$ and  $0<\alpha<1$,
$|y|^{2} \geq  \alpha
|y|^{2} + (1-\alpha)$ and then for $0<t <t_{0}$  we obtain in
(\ref{eq:bound_at_0})
\begin{displaymath}
  0\leq  u(0,t,|v_{0} -u_{0}|) \leq   \frac{1}{(4\pi t)^{\N/2}} \e^{-(1-\alpha) \frac{1}{4t}}
  \int_{|y|\geq 1} \e^{-\alpha \frac{|y|^2}{4t_{0}}} |v_0(y)
  -U_{0}(y)|  \,\d y  \leq \frac{\delta}{2}
\end{displaymath}
provided $t_{0}$ is small enough since $ \frac{1}{(4\pi t)^{\N/2}}
\e^{-(1-\alpha) \frac{1}{4t}} \to 0$ as $t\to 0$.

Now for  $t_{0} < t< T+1$  we obtain in
(\ref{eq:bound_at_0})
\begin{displaymath}
  0\leq  u(0,t,|v_{0} -u_{0}|) \leq \frac{1}{(4\pi t_{0})^{\N/2}}
  \int_{|y|\geq R} \e^{-\frac{|y|^2}{4(T+1)}}
|v_0(y) -U_{0}(y)|  \,\d y \leq  \frac{\delta}{2}
\end{displaymath}
provided $R$ is sufficiently large.
\end{proof}

\subsection{The rescaling approach of V\'azquez \& Zuazua}
\label{sec:rescalling-appr-vazq}
For the case of solutions of the heat equation that remain locally
bounded, the results in Propositions \ref{prop:estimates_from_u(0;t)}
and \ref{prop:oscillations} and Theorem \ref{thr:oscillation_at_0} can
be revisited in terms of the rescaling argument of \cite{VZ2002} as follows. As we now show, it is relatively straightforward to extend their approach from
$L^{\infty}(\R^{\N})$ initial data to more general measure-valued data that leads to globally bounded solutions.

\noindent (i) We can define  dilatations of measures through
the analogous result holding for a locally integrable $f$ and
$\varphi \in C_{c}(\R^\N)$, namely $f_{\lambda}(x)= f(\lambda x)$
satisfies
\begin{displaymath}
  \int_{\R^\N} f_{\lambda} (x) \varphi(x) \,\d x =
  \frac{1}{\lambda^{\N}}  \int_{\R^\N} f(z)
  \varphi (\frac{z}{\lambda}) \, \d z = \frac{1}{\lambda^{\N}}    \int_{\R^\N}  f (z) \varphi
  _{\frac{1}{\lambda}}  (z)\, \d z .
\end{displaymath}
That is, for $\lambda >0$ and $\mu \in \mathcal{M}_\loc (\R^\N)$
\begin{displaymath}
 \int_{\R^\N}   \varphi (z)\, \d \mu _{\lambda}  (z) :=
 \frac{1}{\lambda^{\N}}    \int_{\R^\N}
 \varphi _{\frac{1}{\lambda}}  (z)\, \d \mu(z) .
\end{displaymath}
Hence $\mu_{\lambda} \in \mathcal{M}_\loc (\R^\N)$ and
is  a positive measure whenever $\mu$ is. Then it follows that
\begin{displaymath}
 \int_{\R^\N}   \varphi (z)\, \d |\mu _{\lambda}  (z)| :=
 \frac{1}{\lambda^{\N}}    \int_{\R^\N}
 \varphi _{\frac{1}{\lambda}}  (z)\, \d |\mu(z)|  .
\end{displaymath}

These  extend, by density,  to $\varphi \in L^{1}(\d \mu) = L^{1}(\d
\mu_{\lambda})$.

\noindent (ii) For $\mu \in \mathcal{M}_{\eps} (\R^\N)$ and $\eps >0$,
$\lambda >0$
\begin{displaymath}
 \|\mu_{\lambda}\|_{\mathcal{M}_{\eps} (\R^\N)} = \left(\frac{\eps}{\pi}\right)^{\N/2} \int_{\R^\N}\e^{-\eps|x|^2}
  \,\d |\mu_{\lambda} (x)| =  \left(\frac{\eps}{\pi
      \lambda^{2}}\right)^{\N/2}
  \int_{\R^\N}\e^{-\frac{\eps}{\lambda^{2}} |y|^2}  \,\d |\mu (y)| =
  \|\mu\|_{\mathcal{M}_{\frac{\eps}{\lambda^{2}} } (\R^\N)} .
\end{displaymath}

Therefore, $\{\mu_{\lambda}\}_{\lambda >0}$ is bounded in
$\mathcal{M}_{\eps} (\R^\N)$ if and only if $\mu \in \mathcal{M}_{0,B}
(\R^{\N})$, that is,
\begin{displaymath}
 \Norm{\mu}_{ \mathcal{M}_{0,B} (\R^{\N})}:=   \sup_{\eps >0}  \| \mu \|_{ \mathcal{M}_{\eps} (\R^{\N})} < \infty .
\end{displaymath}
According to \eqref{eq:value_at_zero_and_normMeps} and part (i) in
Proposition \ref{prop:estimates_from_u(0;t)}, this is equivalent to
the solution of the heat equation  $u(x, t, \mu)$
  being  uniformly bounded in sets $\frac{|x|}{\sqrt{t}} \leq R$.

\noindent (iii) We also get for $u_{0} \in \mathcal{M}_{0,B} (\R^\N)$ and
$\lambda >0$
\begin{align*}
S(t) u_{0, \lambda} (x)&=   u(x,t, u_{0, \lambda}) = \frac{1}{(4\pi t)^{\N/2}} \int_{\R^\N} \e^{-\frac{|x-y|^2}{4t}}
\, \d u_{0,\lambda}(y) = \frac{1}{(4\pi t \lambda^{2})^{\N/2}}
\int_{\R^\N} \e^{-\frac{|x-\frac{y}{\lambda}|^2}{4t}} \, \d u_{0}(y)\\
&  = \frac{1}{(4\pi t \lambda^{2})^{\N/2}}
\int_{\R^\N} \e^{-\frac{|\lambda x-y|^2}{4t \lambda^{2}}} \, \d
u_{0}(y)  = u(\lambda x, \lambda^{2} t, u_{0})  = S(\lambda^{2} t)
u_{0} (\lambda x) .
\end{align*}

In particular, with $t=1$
\begin{displaymath}
  S(1)  u_{0, \lambda} (x) = S(\lambda^{2}) u_{0} (\lambda x)
\end{displaymath}

\noindent (iv) As a consequence of  Lemma \ref{lem:Meps_banach} it
follows that $\mathcal{M}_{\eps}(\R^{\N}) = (C_{-\eps,0}(\R^{\N}))'$,
where
$$
f\in C_{-\eps,0} (\R^{\N})\qquad\mbox{if and only if}\qquad \e^{\eps|x|^2}
f(x) \in  C_{0}(\R^{\N})
$$ 
and the norm  is $\|f\|_{C_{-\eps,0} (\R^\N)} :=   \sup_{x\in \R^\N}
\e^{\eps|x|^2} |f(x)|$.

Now, if $u_{0} \in \mathcal{M}_{0,B} (\R^\N)$ then
$\{u_{0,\lambda}\}_{\lambda>0}$ is sequentially weak-$*$ compact in
$\mathcal{M}_{\eps}(\R^{\N})$ for any $\eps >0$.
Taking subsequences $\lambda_{n} \to \infty$,   we can assume that
$u_{0,\lambda_{n}}$ converges weakly-$*$ to $\mu \in \mathcal{M}_{0,B}
(\R^\N)$ with $\Norm{\mu}_{ \mathcal{M}_{0,B} (\R^{\N})} \leq
\Norm{u_{0}}_{ \mathcal{M}_{0,B} (\R^{\N})}$ and, by smoothing,
$S(1)u_{0,\lambda_{n}} $ converges in  $L^{\infty}_\loc(\R^\N)$ to $v=
  S(1) \mu$.
Hence, setting $t_{n}= \lambda_{n}^{2}$ (so $t_n \to \infty$) we obtain
\begin{displaymath}
  S(t_{n}) u_{0} ( \sqrt{t_{n}} x) =  u(\sqrt{t_{n}} x, t_{n}, u_{0}) \to S(1)
    \mu (x) , \quad  L^{\infty}_\loc(\R^\N) .
\end{displaymath}
In particular, with $x=0$ we have
\begin{displaymath}
   u(0, t_{n},  u_{0}) \to S(1) \mu  (0) , \quad  L^{\infty}_\loc(\R^\N) .
\end{displaymath}
This relates the results in  Propositions \ref{prop:estimates_from_u(0;t)},
\ref{prop:oscillations} and Theorem \ref{thr:oscillation_at_0} to the
set of weak-$*$ sequential limits of
$\{u_{0,\lambda}\}_{\lambda>0}$ in $\mathcal{M}_{\eps}(\R^{\N})$.
Notice that for all such $\mu$
\begin{displaymath}
| S(1) \mu  (0)| = \left| \frac{1}{(4\pi)^{\N/2}}
\int_{\R^\N} \e^{-\frac{|y|^2} {4}} \, \d \mu (y) \right| \leq
\|\mu\|_{\mathcal{M}_{\frac{1}{4}}(\R^{\N})} \leq  \Norm{u_{0}}_{ \mathcal{M}_{0,B} (\R^{\N})};
\end{displaymath}
thus the above argument only applies when
$u(0, t,  u_{0})$  is bounded for large times.

\subsection{Prescribed behaviour at $x=0$}
\label{sec:prescribed_g_at_x=0}

We now show that if we drop the restriction that the solutions are
non-negative then any (sufficiently smooth) behaviour of the solution
at $x=0$ can be obtained with an appropriate choice of initial
condition. We use a construction inspired by the Tychonov example of
an initial condition that leads to non-uniqueness with zero initial
data (see \cite{Tychonov} and  Chapter 7, pages 171-172 in \cite{J}, for example).

\begin{proposition} \label{prop:prescribed_value_at_x=0}

Let $\gamma$ be any real analytic function on $[0,T)$ with $T\leq \infty$. Then there
exists $u_{0} \in L^{1}_\eps(\R^\N)$ for some $\eps>0$  such that $u(t)
= S(t) u_{0}$ given by \eqref{eq:solution_heat_up_t=0} is defined for
all  $t \in [0,T)$ and such that
\begin{displaymath}
  u(0,t)= \gamma(t), \quad 0\leq t < T .
\end{displaymath}

\end{proposition}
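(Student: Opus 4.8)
The plan is to mimic the classical Tychonov non-uniqueness example, in which one builds a solution of the heat equation by prescribing its values and all its spatial derivatives along the line $x=0$ and then filling in a power series in the spatial variable. Since $\gamma$ is real analytic on $[0,T)$, for any $t_\ast\in(0,T)$ it extends to a holomorphic function on a complex neighbourhood of $[0,t_\ast]$, which yields Cauchy-type bounds $|\gamma^{(k)}(t)|\le C\,k!\,\rho^{-k}$ uniformly for $t$ in compact subintervals, for some $\rho>0$. The natural ansatz (in one dimension, say, the higher-dimensional case reducing to a radial construction) is
\begin{equation*}
u(x,t)=\sum_{k=0}^\infty \gamma^{(k)}(t)\,\frac{x^{2k}}{(2k)!},
\end{equation*}
which formally satisfies $u_t=u_{xx}$ term by term because differentiating twice in $x$ shifts the index so as to reproduce $\partial_t$ acting on the series, and which gives $u(0,t)=\gamma(t)$. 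The Cauchy bounds ensure that this series converges absolutely and locally uniformly in $(x,t)$, and that $u$ is smooth and solves the heat equation on $\R\times(0,T)$; moreover one obtains a quadratic-exponential pointwise bound $|u(x,t)|\le M(t)\e^{a(t)|x|^2}$ on compact time intervals.

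**Identifying the initial datum and placing it in the right class.**

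Having produced a classical solution $u$ on $\R^\N\times(0,T)$ that realises $u(0,t)=\gamma(t)$, I would next argue that it \emph{is} of the form $S(t)u_0$ for an initial condition $u_0\in L^1_\eps(\R^\N)$ for some $\eps>0$. There are two ways to get $u_0$: either set $u_0(x)=u(x,t_0)$ for a small fixed $t_0>0$ and restart the evolution (so that the solution is defined on $[0,T-t_0)$, which can be made to cover any prescribed subinterval, or one recenters so that the construction begins at $t=0$ using $\gamma$ extended analytically slightly to the left of $0$), or one takes the formal limit of the series as $t\to0^+$ to define $u_0(x)=\sum_k \gamma^{(k)}(0)\,x^{2k}/(2k)!$ directly. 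The quadratic-exponential bound shows $u_0\in L^1_\eps(\R^\N)$ for $\eps$ larger than the relevant growth rate $a$. The point $x=0$ behaviour is then $u(0,t)=\gamma(t)$ by construction.

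**Matching the heat-kernel representation and verifying uniqueness.**

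The crucial step is to confirm that the $u$ built from the power series actually coincides with $S(t)u_0$ given by the heat-kernel formula \eqref{eq:solution_heat_up_t=0}, rather than being some \emph{other} solution with the same initial data. This is exactly where the uniqueness machinery of Theorem \ref{thr:uniqueness} enters: because $u$ satisfies a quadratic-exponential bound $|u(x,t)|\le M\e^{a|x|^2}$ on $s\le t<T$ for every $s>0$, the classical Tychonov--type uniqueness (cited after Theorem \ref{thr:uniqueness}) gives $u(t)=S(t-s)u(s)$, i.e.\ \eqref{eq:uniqueness_semigroup_solution}; and since $u(t)\to u_0$ in a sufficiently strong sense (one checks \eqref{eq:uniqueness_u_in_L1eps} and \eqref{eq:uniqueness_initial_data_Cc} from the local uniform convergence of the series as $t\to0$ together with the $L^1_\eps$ bound), Theorem \ref{thr:uniqueness}(iv-b) forces $u(t)=S(t)u_0$.

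I expect the main obstacle to be the \emph{convergence and regularity analysis of the series}: one must extract from the real-analyticity of $\gamma$ precise Cauchy estimates $|\gamma^{(k)}(t)|\le C k!\,\rho^{-k}$ that are uniform on compact subintervals of $[0,T)$, and then control the tail of $\sum_k \gamma^{(k)}(t)x^{2k}/(2k)!$ carefully enough to justify term-by-term $t$- and $x$-differentiation and to produce the quadratic-exponential bound with an explicit growth rate. The subtlety is that the radius of convergence $\rho$ (hence the implicit growth constant $a$, and thus the admissible $\eps$) degrades as $t$ approaches $T$, so to reach times arbitrarily close to $T$ one has to let $\eps\to\infty$; this is harmless here because the statement only asks for membership in \emph{some} $L^1_\eps(\R^\N)$ and for the solution to exist on $[0,T)$, and one can patch together the construction on successive subintervals $[0,t_\ast]$ using the semigroup property \eqref{eq:semigroup_solution}. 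Everything else—the verification of the heat equation, the evaluation at $x=0$, and the identification with $S(t)u_0$—is then routine given the earlier lemmas.
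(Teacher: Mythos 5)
Your proposal is correct and follows essentially the same route as the paper's own proof: the Tychonov power-series ansatz $u(x,t)=\sum_{k}\gamma^{(k)}(t)\,x^{2k}/(2k)!$ in one dimension, the Cauchy estimates $|\gamma^{(k)}(t)|\le C\,k!\,\tau^{-k}$ uniform on compact subintervals of $[0,T)$, the resulting quadratic-exponential bound $|u(x,t)|\le C\e^{|x|^2/\tau}$, and the identification $u(t)=S(t)u_0$ via Theorem \ref{thr:uniqueness} (checking \eqref{eq:uniqueness_Du_in_L1delta}, \eqref{eq:uniqueness_u_in_L1eps} and \eqref{eq:uniqueness_initial_data_Cc}), with the degradation of $\tau$ near $T$ handled by working on compact subintervals. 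The one misstep is your parenthetical claim that the case of $\R^\N$ reduces to a ``radial construction'': a radial profile $\tilde u(|x|,t)$ built from the one-dimensional series would \emph{not} solve the heat equation in $\R^\N$, since the Laplacian of a radial function carries the extra term $\frac{\N-1}{r}\partial_r$; the correct (and trivial) extension, used in the paper, is $u(x_1,\ldots,x_\N,t)=u(x_1,t)$, i.e.\ constant in the remaining variables.
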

\begin{proof}

We seek a
solution of the one-dimensional heat equation in
the form
$$
u(x,t) = \sum_{k=0}^{\infty} g_{k}(t) x^{k}, \quad x\in \R
$$
converging for every $x\in\R$  (for each $t$ in some range), such that
$u(0,t)=\gamma(t)$ for all  $t \in [0,T)$. Substituting this expression into
the PDE gives
\begin{displaymath}
  g_{k}'(t) = (k+2)(k+1) g_{k+2}(t) , \quad t \in [0,T).
\end{displaymath}
Assume that $u_{x}(0,t) =0$; then $g_{1}(t)=0$ and so $g_{2m+1}(t)=0$ for
all $t$ and $m\in \NN$. Solving the recurrence for even powers gives
$g_{0}(t) = \gamma(t)$ and
\begin{displaymath}
  g_{2m}(t)= \frac{\gamma^{(m)}(t)}{(2m)!}
\end{displaymath}
and therefore
\begin{equation}\label{g2u}
  u(x,t) = \sum_{k=0}^{\infty} \frac{\gamma^{(k)}(t)}{(2k)!}   x^{2k} .
\end{equation}

As $\gamma$ is real analytic it follows that for each $t \in [0,T)$
there exist constants $C,\tau>0$ (potentially depending on $t$) such
that
\begin{equation}\label{RAgk}
|\gamma^{(k)}(t)|\le Ck!\tau^{-k}
\end{equation}
(see Exercise 15.3 in \cite{R}, for example; in fact the
constants $C$ and $\tau$ can be chosen uniformly on any compact
subinterval of  $[0,T)$). It follows that the series in \eqref{g2u}
converges for all $x\in\R^\N$ and for every  $t \in [0,T)$, and given
(\ref{RAgk}) we have
then
$$
|u(x,t)|\le \sum_{k=0}^\infty C \frac{k!}{(2k)!} x^{2k}\tau^{-k}\le
C\sum_{k=0}\frac{(x/\sqrt\tau)^2}{k!} =  C\e^{|x|^2/\tau} .
$$
In particular $u$ satisfies
(\ref{eq:quadratic_exponential_bound_upt=0}) on any compact time
interval of  $[0,T)$. Also it is easy to see that $u(x,t)$ satisfies
(\ref{eq:uniqueness_Du_in_L1delta}) in compact intervals of $(0,T)$ and
(\ref{eq:uniqueness_initial_data_Cc}).
By Theorem \ref{thr:uniqueness}, we get
\begin{displaymath}
  u(t) = S(t) u_{0} , \quad  t \in [0,T).
\end{displaymath}

We can embed this solution   in $\R^\N$  by
setting $u(x_{1}, \ldots, x_{\N}, t) = u(x_{1},t)$.
\end{proof}

When $T=\infty$ this provides an example showing how the condition
$u_0\in L^1_0(\R^\N)$ is not required to ensure global existence for
initial data that is not required to be non-negative: 
the solution
$u(x,t)$ satisfies the heat equation for all time, remains in one of
the $L^1_{\eps(t)}(\R^{\N})$ spaces for each  $t\ge0$, but does not necessarily
satisfy $u_0\in L^1_0(\R)$.

The non-uniqueness example of Tychonov uses precisely the above
construction, but based on a function such as $\gamma(t)=\e^{-1/t^2}$
whose radius of analyticity shrinks as $t\to 0^+$, see \cite[pg 172]{J}. For such a case, we
have that the heat solution in (\ref{g2u}) satisfies $u(x,t)\to
u_{0}(x)= 0$
uniformly in compact sets as
$t\to 0^{+}$.
In the language of
this paper, $u(t)\in L^1_{\eps(t)}(\R^{\N})$ for every $t>0$, but as $t\to 0^{+}$
we have $\eps(t)\to\infty$ and
(\ref{eq:uniqueness_u_in_L1eps}) is not satisfied.
In this way this classic non-uniqueness example does not
contradict the uniqueness result of Theorem \ref{thr:uniqueness}.

It would be interesting to find conditions on $\gamma(t)$ that ensure the
positivity of $u_0$ (and hence of $u(x,t)$). Certainly positivity of
$\gamma$ itself is not sufficient; indeed, note that 
$$
\gamma(t)=\sum_{k=0}^\infty
\frac{\alpha_k}{k!}t^k\qquad\Rightarrow\qquad u_0(x)=\sum_{k=0}^\infty
\frac{\alpha_k}{(2k)!}x^{2k}.
$$
The simple choice $\alpha_0=1$, $\alpha_1=-2$, $\alpha_2=2$ yields
$$
\gamma(t)=1-2t+t^2\ge0\qquad\mbox{but}\qquad u_0(x)=1-x+\frac{x^2}{24}
$$
and $u_0(2)<0$.

\section{Extension to other problems}
\label{sec:extensions-other-probl}
\setcounter{equation}{0}

First note that by simple reflection arguments, we can also consider
the heat equation in  the half space $\R^\N_{+}$, that is
\begin{equation} \label{eq:heat_in_halfspace}
 \begin{cases}
u_{t}- \Delta u  =0,  & x \in \R^\N_{+} , \ t>0,\cr
u(x,0) = u_{0}(x),  &  x \in \R^\N_{+},\cr
 B(u)(x)=0,  & x \in \partial \R^\N_{+}
\end{cases}
\end{equation}
where  $B(u)$ denotes boundary conditions of Dirichlet type, i.e.
$B(u)=u$ or  Neumann, i.e.  $B(u)=  \frac{\partial u}{\partial
 \vec{n}}  = -\partial_{x_{\N}} u (x',0)$. Indeed, performing odd or
even reflection respectively we extend (\ref{eq:heat_in_halfspace}) to
the heat equation in $\R^{\N}$ for solutions with odd or even
symmetry. Hence, the arguments in previous sections  apply.

Also note  that a basic ingredient in the proofs above  is the
gaussian structure  of the heat kernel. Hence, the same results apply
to any   parabolic operator with a similar gaussian bound for the
kernel, see  \cite{Daners00}. In particular,  our results apply for
differential operators of the form
\begin{displaymath} 
L(u) = -\sum_{i=1}^{N} \partial_{i} \Big( a_{i,j}(x) \partial_{j} u +
a_{i}(x) u\Big) + b_{i}(x) \partial_{i} u + c_{0}(x) u
\end{displaymath}
with real coefficients $a_{i,j}, a_{i}, b_{i}, c_{0} \in
L^{\infty}(\R^{\N})$ and satisfies the ellipticity condition
\begin{displaymath} 
\sum_{i,j=1}^{N} a_{i,j}(x) \xi_{i} \xi_{j} \geq \alpha_{0}
|\xi|^{2}
\end{displaymath}
for some $\alpha_{0} >0$ and for every $\xi \in \R^{\N}$.
In such a case   the fundamental solution of the parabolic problem $u_{t} + Lu=0$
in $\R^{\N}$ satisfies a gaussian bound
$$
0 \leq k(x,y,t,s) \leq C(t-s)^{-\N/2} \e^{\omega (t-s)}
\e^{-c\frac{|x-y|^2}{(t-s)}}
$$
for $t>s$ and $x,y \in \R^{\N}$ where $C,c, \omega$ depend on the
$L^{\infty}$ norm of the coefficients.
The gaussian bounds are obtained from \cite{Daners00} while the positivity
of the kernel comes from the maximum principle, see \cite{GT}, chapter
8. Therefore the analysis of previous sections, applies to solutions
of the form
\begin{displaymath}
u(x,t)=S_{L}(t)u_{0} = \int_{\R^\N} k(x,y,t,0) \, \d u_0(y)  .
\end{displaymath}
Other results on Gaussian upper bounds can be found in   \cite{Aro68, Davies,Ouhabaz,Stroock}.

\appendix

\section{Some auxiliary results}
\label{sec:some-auxil-results}
\setcounter{equation}{0}

Here we prove several technical results used above. First, we prove
that certain spaces of functions or measures used above are Banach
spaces.

\begin{lemma}\label{lem:Meps_banach}

 The sets  $\mathcal{M}_{\eps}(\R^{\N})$ and $L^{1}_{\eps} (\R^\N)$ in
 \eqref{eq:space_Meps} and \eqref{eq:space_L1eps} with
    the norms \eqref{eq:norm_Meps} and
    \eqref{eq:norm_L1eps} respectively, are Banach spaces.

\end{lemma}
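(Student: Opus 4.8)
The plan is to exhibit each of $\M_\eps(\R^\N)$ and $L^1_\eps(\R^\N)$ as an isometric copy --- up to the fixed constant $(\eps/\pi)^{\N/2}$ --- of a space already known to be Banach, namely $\M_{\BTV}(\R^\N)$ and $L^1(\R^\N)$ respectively, and then to transport completeness across this isometry. This reduces everything to two classical facts recalled earlier: that $\M_{\BTV}(\R^\N)=(C_0(\R^\N))'$ is complete, and that $L^1(\R^\N)$ is complete.

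For the measure case I would introduce the weighting map $T\mu:=\e^{-\eps|x|^2}\mu$, meaning the measure $\nu$ defined by $\d\nu=\e^{-\eps|x|^2}\,\d\mu$, i.e. $\nu(A)=\int_A\e^{-\eps|x|^2}\,\d\mu$ for Borel $A$. Since the weight $\e^{-\eps|x|^2}$ is continuous, bounded by $1$, and strictly positive, $T$ carries the Jordan decomposition $\mu=\mu^+-\mu^-$ to $\nu=\e^{-\eps|x|^2}\mu^+-\e^{-\eps|x|^2}\mu^-$; the two pieces remain mutually singular (they are carried by the same disjoint Borel sets as $\mu^\pm$), so that $|\nu|=\e^{-\eps|x|^2}|\mu|$. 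Hence $\|\nu\|_{\BTV}=|\nu|(\R^\N)=\int_{\R^\N}\e^{-\eps|x|^2}\,\d|\mu|$, which is exactly $(\pi/\eps)^{\N/2}\|\mu\|_{\M_\eps(\R^\N)}$. Thus $T$ maps $\M_\eps(\R^\N)$ linearly into $\M_{\BTV}(\R^\N)$ and scales the norm by the fixed positive constant $(\pi/\eps)^{\N/2}$.

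The one point needing care is that $T$ is a bijection onto $\M_{\BTV}(\R^\N)$, with inverse $\nu\mapsto\e^{\eps|x|^2}\nu$ landing back in $\M_\eps(\R^\N)$. Given $\nu\in\M_{\BTV}(\R^\N)$ I would set $\d\mu=\e^{\eps|x|^2}\,\d\nu$; on any compact $K\subset B(0,R)$ the weight is bounded by $\e^{\eps R^2}$, so $|\mu|(K)\le\e^{\eps R^2}\|\nu\|_{\BTV}<\infty$ and $\mu$ is a genuine (locally finite) Radon measure, while $\int_{\R^\N}\e^{-\eps|x|^2}\,\d|\mu|=|\nu|(\R^\N)<\infty$ places $\mu$ in $\M_\eps(\R^\N)$ with $T\mu=\nu$. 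The norm axioms for $\|\cdot\|_{\M_\eps(\R^\N)}$ are routine, positivity following from the fact that the weight never vanishes, so $\int\e^{-\eps|x|^2}\,\d|\mu|=0$ forces $|\mu|=0$. With $T$ a linear bijection scaling the norm by a fixed positive constant onto a Banach space, completeness transfers in the usual way: a Cauchy sequence $\mu_n$ in $\M_\eps(\R^\N)$ has $T\mu_n$ Cauchy, hence convergent in $\M_{\BTV}(\R^\N)$, and $T^{-1}$ returns a limit in $\M_\eps(\R^\N)$.

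The argument for $L^1_\eps(\R^\N)$ is identical, with $\M_{\BTV}(\R^\N)$ replaced by $L^1(\R^\N)$ and $T$ the pointwise multiplication $f\mapsto\e^{-\eps|x|^2}f$; here $\|\e^{-\eps|x|^2}f\|_{L^1(\R^\N)}=(\pi/\eps)^{\N/2}\|f\|_{L^1_\eps(\R^\N)}$, and the inverse $g\mapsto\e^{\eps|x|^2}g$ again lands in $L^1_{\loc}(\R^\N)$ by the same compact-set bound. The only genuine obstacle in the whole argument is the book-keeping behind the total-variation identity $|\nu|=\e^{-\eps|x|^2}|\mu|$ and the local finiteness of the inverse image; both are elementary consequences of the strict positivity and local boundedness of the Gaussian weight, so I expect no real difficulty.
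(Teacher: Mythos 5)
Your proof is correct and follows essentially the same route as the paper's: both exhibit $\M_\eps(\R^\N)$ (resp.\ $L^1_\eps(\R^\N)$) as an isometric copy, via the Gaussian weighting map, of the Banach space $\M_{\BTV}(\R^\N)=(C_0(\R^\N))'$ (resp.\ $L^1(\R^\N)$) and transfer completeness across the isomorphism. The only cosmetic differences are that the paper absorbs the constant $(\eps/\pi)^{\N/2}$ into the weight $\rho_\eps$ so the map is an exact isometry, and it establishes the total-variation identity through the Hahn decomposition together with a Fatou argument for surjectivity, whereas you argue via mutual singularity of the weighted Jordan pieces --- both are valid.
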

\begin{proof}
For  $\mathcal{M}_{\eps}(\R^{\N})$  we proceed as follows.  Given $\mu
\in \mathcal{M}_{\loc}(\R^{\N})$ we define the Borel
measure such that for all Borel sets $A \subset \R^{\N}$
\begin{displaymath}
  \Phi_{\eps}(\mu) (A) = \int_{A} \rho_{\eps}(x)\, \d \mu (x)
\end{displaymath}
where $\rho_{\eps}(x) = \left(\frac{\eps}{\pi}\right)^{\N/2}
\e^{-\eps|x|^2}$.
Then $\Phi_{\eps}(\mu) \in \mathcal{M}_{loc}(\R^{\N})$, is clearly
absolutely continuous with respect to $\mu$ and for all $\varphi \in
C_{c}(\R^{\N})$ we have
\begin{displaymath}
   \int_{\R^{\N}} \varphi (x)\, \d \Phi_{\eps}(\mu) (x)  =
   \int_{\R^{\N}} \varphi (x) \rho_{\eps}(x)\, \d \mu (x) .
\end{displaymath}

Now we claim that the total variation of $\Phi_{\eps}(\mu)$ satisfies
\begin{displaymath}
   |\Phi_{\eps}(\mu)| (A) = \int_{A} \rho_{\eps}(x)\, \d |\mu (x) |
\end{displaymath}
 for all Borel sets $A \subset \R^{\N}$, which would imply that  for all $\varphi \in
C_{c}(\R^{\N})$ we have
\begin{displaymath}
   \int_{\R^{\N}} \varphi (x)\, \d |\Phi_{\eps}(\mu) (x)|  =
   \int_{\R^{\N}} \varphi (x) \rho_{\eps}(x)\, \d |\mu (x) |.
\end{displaymath}
To prove the claim, observe that the positive part in the Jordan
decomposition satisfies
\begin{displaymath}
   \Phi_{\eps}(\mu)^{+} (A) = \sup_{B\subset A} \int_{B}
   \rho_{\eps}(x)\, \d \mu (x)  = \sup_{B\subset A} \int_{B^{+}}
   \rho_{\eps}(x)\, \d \mu^{+} (x) - \int_{B^{-}}
   \rho_{\eps}(x)\, \d \mu^{-} (x)
\end{displaymath}
\begin{displaymath}
=  \sup_{B\subset A} \int_{B^{+}}   \rho_{\eps}(x)\, \d \mu^{+} (x)  =  \sup_{B\subset A} \int_{B}
   \rho_{\eps}(x)\, \d \mu^{+} (x) =   \int_{A}
   \rho_{\eps}(x)\, \d \mu^{+} (x)
\end{displaymath}
where we have used that $B= B^{+} \cup B^{-}$ are the positive and
negative parts of a set $B$, according to the Hahn decomposition of
the measure $\mu$; see Theorem 3.3 in \cite{Folland}.
Analogously, $ \Phi_{\eps}(\mu)^{-} (A) = \int_{A}
   \rho_{\eps}(x) \d \mu^{-} (x)$ and with this the claim follows.

Now if, $\mu \in \mathcal{M}_{\eps}(\R^{\N})$ we have
\begin{displaymath}
   |\Phi_{\eps}(\mu)| (\R^{\N}) = \int_{\R^{\N}} \rho_{\eps}(x)\, \d
   |\mu (x) | < \infty
\end{displaymath}
that is
\begin{displaymath}
  \Phi_{\eps}: \mathcal{M}_{\eps} (\R^{\N}) \to \mathcal{M}_{\BTV}(\R^{\N})
\end{displaymath}
is an isometry. To prove the result it remains to show that
$\Phi_{\eps}$ is onto. In fact if $\sigma \in
\mathcal{M}_{\BTV}(\R^{\N})$ we define $\mu$ such that for all borel sets $A \subset \R^{\N}$
\begin{displaymath}
  \mu (A) = \int_{A} \frac{\d \sigma (x)}{\rho_{\eps}(x)}
\end{displaymath}
and thus  for all $\varphi \in
C_{c}(\R^{\N})$ we have
\begin{displaymath}
   \int_{\R^{\N}} \varphi (x)\, \d \mu (x)  =
   \int_{\R^{\N}} \frac{\varphi (x)}{ \rho_{\eps}(x)}\, \d \sigma (x) .
\end{displaymath}
Clearly $\mu \in \mathcal{M}_{loc} (\R^{\N})$ and arguing as above we
get for all borel sets $A \subset \R^{\N}$
\begin{displaymath}
 |\mu| (A) = \int_{A} \frac{\d |\sigma (x)|}{\rho_{\eps}(x)}
\end{displaymath}
and   for all $\varphi \in
C_{c}(\R^{\N})$ we have
\begin{displaymath}
   \int_{\R^{\N}} \varphi (x)\, \d |\mu (x)|  =
   \int_{\R^{\N}} \frac{\varphi (x)}{ \rho_{\eps}(x)}\, \d |\sigma (x)| .
\end{displaymath}

Now take an increasing sequence $0\leq \varphi_{n} \in C_{c}(\R^{\N})$
such that $\varphi_{n} \to \rho_{\eps}$ pointwise in $\R^{\N}$ and
then
\begin{displaymath}
   \int_{\R^{\N}} \varphi_{n} (x)\, \d |\mu (x)|  =
   \int_{\R^{\N}} \frac{\varphi_{n} (x)}{ \rho_{\eps}(x)}\, \d |\sigma
   (x)|  \leq    \int_{\R^{\N}}\, \d |\sigma
   (x)| = |\sigma (\R^{\N})| <\infty
\end{displaymath}
and by Fatou's lemma we get $ \int_{\R^{\N}} \rho_{\eps} (x)\, \d |\mu
(x)| <\infty$,  that is, $\mu \in \mathcal{M}_{\eps}(\R^{\N})$. Clearly
$\Phi_{\eps}(\mu) = \sigma$ and we conclude the proof.

On the other hand, note  that $L^{1}_{\eps}
(\R^\N) = L^{1} (\R^\N, \rho_{\eps} \, \d x)$ and so is a Banach
space. Also, note that along the lines of the proof above it is easy
to see that  the  operator $\Phi_{\eps}(f) = \rho_{\eps} f$, $\Phi _{\eps}:
L^{1}_{\eps}(\R^{\N}) \to L^{1}(\R^{\N})$ is an isometric
isomorphism.
\end{proof}

\begin{lemma} \label{lem:MU_banach}

The space of uniform measures
$\mathcal{M}_U(\R^\N)$ defined in \eqref{M_U} with the norm
\eqref{eq:M_U_norm} is a Banach space.

\end{lemma}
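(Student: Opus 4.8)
The plan is to prove completeness; that $\|\cdot\|_{\mathcal{M}_U(\R^\N)}$ is genuinely a norm is routine, following from subadditivity of the total variation (giving the triangle inequality after taking the supremum over ball-centres), absolute homogeneity, and the observation that $\|\mu\|_{\mathcal{M}_U(\R^\N)}=0$ forces $|\mu|(B(x,1))=0$ for every $x$ and hence $\mu=0$. So suppose $\{\mu_n\}$ is a Cauchy sequence in $\mathcal{M}_U(\R^\N)$.

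The key observation is that the uniform norm controls the local total variation on every bounded set. First I would note that if $A$ is bounded, then covering $A$ by finitely many unit balls $B(x_1,1),\dots,B(x_{C_A},1)$ gives, by subadditivity of $|\nu|$,
\[
|\nu|(A)\le\sum_{i=1}^{C_A}|\nu|(B(x_i,1))\le C_A\,\|\nu\|_{\mathcal{M}_U(\R^\N)}\qquad\text{for all }\nu\in\mathcal{M}_U(\R^\N).
\]
Applying this to $\nu=\mu_n-\mu_m$ shows that, for each fixed $R>0$, the restrictions $\mu_n|_{B(0,R)}$ form a Cauchy sequence in the space of finite signed measures on $B(0,R)$ with the total-variation norm, since $\|\mu_n|_{B(0,R)}-\mu_m|_{B(0,R)}\|_{\BTV}=|\mu_n-\mu_m|(B(0,R))\le C_{B(0,R)}\|\mu_n-\mu_m\|_{\mathcal{M}_U(\R^\N)}$. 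That space is complete, so these restrictions converge in total variation; using an exhaustion $B(0,R_k)$, the fact that total variation only decreases under further restriction, and uniqueness of limits, one checks the limiting measures are consistent and glue to a single Radon measure $\mu\in\mathcal{M}_\loc(\R^\N)$ with $\mu_n|_{B(0,R)}\to\mu|_{B(0,R)}$ in $\BTV$ for every $R$.

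It then remains to show $\mu\in\mathcal{M}_U(\R^\N)$ and $\mu_n\to\mu$ in the uniform norm, and both rely on the elementary reverse triangle inequality $\bigl||\sigma|(A)-|\tau|(A)\bigr|\le|\sigma-\tau|(A)$, valid for signed measures because $|\sigma|\le|\tau|+|\sigma-\tau|$ as measures. A Cauchy sequence is bounded, say $\|\mu_n\|_{\mathcal{M}_U(\R^\N)}\le M$. Fixing $x$ and using total-variation convergence on $B(x,2)\supset B(x,1)$, this inequality gives $|\mu|(B(x,1))=\lim_n|\mu_n|(B(x,1))\le M$; taking the supremum over $x$ yields $\mu\in\mathcal{M}_U(\R^\N)$ with $\|\mu\|_{\mathcal{M}_U(\R^\N)}\le M$.

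The main obstacle — the only step needing real care — is obtaining the convergence \emph{uniformly} in the ball-centre $x$, since this uniformity is exactly what distinguishes $\mathcal{M}_U$ from merely local convergence. The point is that the Cauchy condition supplies, for $n,m\ge N$, the estimate $|\mu_n-\mu_m|(B(x,1))<\eps$ uniformly in $x$. Fixing $n\ge N$ and any $x$ and letting $m\to\infty$, the reverse triangle inequality together with $|\mu_m-\mu|(B(x,1))\to0$ (from the $\BTV$ convergence on $B(x,2)$) gives $|\mu_n-\mu|(B(x,1))=\lim_m|\mu_n-\mu_m|(B(x,1))\le\eps$. Crucially the bound $\eps$ is independent of $x$, so taking the supremum over $x$ produces $\|\mu_n-\mu\|_{\mathcal{M}_U(\R^\N)}\le\eps$ for all $n\ge N$, which is the desired convergence and completes the proof.
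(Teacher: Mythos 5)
Your proof is correct and takes essentially the same route as the paper's: both reduce completeness of $\mathcal{M}_U(\R^\N)$ to completeness in total variation on bounded sets (the paper on each closed unit ball, viewed as the dual of $C(\overline{B(x,1)})$) and then upgrade to convergence that is uniform in the ball-centre. Your write-up simply makes explicit the gluing of the local limits and the ``fix $n$, let $m\to\infty$'' uniformity argument that the paper's three-sentence proof leaves implicit.
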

\begin{proof}
  Clearly a Cauchy sequence in the norm (\ref{eq:M_U_norm}) is a
  Cauchy sequence in $\mathcal{M}_{\BTV}(\overline{B(x,1)})$ uniformly
  for  $x\in  \R^{\N}$, that is, the dual of $C(\overline{B(x,1)})$ with the
  uniform convergence. Therefore, it converges in $\mathcal{M}_{\BTV}(\overline{B(x,1)})$ uniformly
  for  $x\in  \R^{\N}$. Hence it converges in $\mathcal{M}_U(\R^\N)$.
\end{proof}

\begin{lemma} {\bf (Green's formulae)}
\label{lem:green_formula}

\noindent (i) Assume that $u\in W^{1,1}_{\loc}(\R^{\N})$ satisfies $u,
\nabla u \in L^{1}_{\eps}(\R^{\N})$ and that $\xi$ is a smooth function
such that $|\nabla \xi (x) |, |\Delta \xi (x) |\leq c \e^{-\alpha |x|^{2}}$ for $x\in
\R^{\N}$ and $\alpha \geq  \eps$. Then
\begin{displaymath}
  \int_{\R^{\N}} u (-\Delta \xi) = \int_{\R^{\N}} \nabla u \nabla \xi.
\end{displaymath}

\noindent (ii) Assume that $u\in W^{1,1}_{\loc}(\R^{\N})$ satisfies $\Delta
u\in L^{1}_{\loc}(\R^{\N})$ and  $\nabla u, \Delta u  \in L^{1}_{\eps}(\R^{\N})$ and that $\xi$ is a smooth function 
such that $| \xi (x) |, |\nabla \xi (x) |\leq c \e^{-\alpha |x|^{2}}$ for $x\in
\R^{\N}$ and $\alpha \geq \eps$. Then
\begin{displaymath}
\int_{\R^{\N}} \nabla u \nabla \xi =   \int_{\R^{\N}}  (-\Delta u) \xi.
\end{displaymath}

\end{lemma}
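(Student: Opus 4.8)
The plan is to prove both identities by a single cut-off argument that reduces each to the elementary weak-derivative identity tested against smooth, compactly supported functions. Fix $\psi\in C^\infty_c(\R^\N)$ with $0\le\psi\le1$, $\psi\equiv1$ on $B(0,1)$ and $\supp\psi\subset B(0,2)$, and set $\psi_R(x)=\psi(x/R)$, so that $\psi_R\to1$ pointwise, $|\nabla\psi_R|\le C/R$, and $\nabla\psi_R$ is supported in the annulus $R\le|x|\le 2R$. First I would check that every integral in the statement converges absolutely: in (i) one has $|u\,\Delta\xi|\le c|u|\,\e^{-\alpha|x|^2}\le c|u|\,\e^{-\eps|x|^2}$ and $|\nabla u\cdot\nabla\xi|\le c|\nabla u|\,\e^{-\eps|x|^2}$ since $\alpha\ge\eps$, and these are integrable because $u,\nabla u\in L^1_\eps(\R^\N)$; the analogous bounds in (ii) use $\nabla u,\Delta u\in L^1_\eps(\R^\N)$ together with the decay of $\xi$ and $\nabla\xi$.

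For (i), since $\psi_R\,\partial_i\xi\in C^\infty_c(\R^\N)$ for each $i$, the definition of the weak derivative gives $\int_{\R^\N} u\,\partial_i(\psi_R\,\partial_i\xi)=-\int_{\R^\N}\partial_i u\,(\psi_R\,\partial_i\xi)$. Summing over $i$ and expanding $\partial_i(\psi_R\,\partial_i\xi)=\partial_i\psi_R\,\partial_i\xi+\psi_R\,\partial_i^2\xi$ yields $\int u\,\psi_R(-\Delta\xi)=\int\psi_R\,\nabla u\cdot\nabla\xi+\int u\,\nabla\psi_R\cdot\nabla\xi$. As $R\to\infty$ the first two integrals converge to $\int u(-\Delta\xi)$ and $\int\nabla u\cdot\nabla\xi$ respectively, by dominated convergence using the bounds above as dominating functions; the claim then follows once the error term $\int u\,\nabla\psi_R\cdot\nabla\xi$ is shown to vanish.

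For (ii) I would first record that $u\in W^{1,1}_\loc(\R^\N)$ with $\Delta u\in L^1_\loc(\R^\N)$ implies $\int\nabla u\cdot\nabla\phi=\int(-\Delta u)\phi$ for all $\phi\in C^\infty_c(\R^\N)$: indeed $\int(\Delta u)\phi=\int u\,\Delta\phi=-\int\nabla u\cdot\nabla\phi$, the first equality being the definition of the distributional Laplacian and the second the weak-derivative identity applied to each $\partial_i\phi$. Taking $\phi=\psi_R\xi\in C^\infty_c(\R^\N)$ and expanding $\nabla(\psi_R\xi)=\psi_R\nabla\xi+\xi\nabla\psi_R$ gives $\int\psi_R\,\nabla u\cdot\nabla\xi+\int\xi\,\nabla u\cdot\nabla\psi_R=\int(-\Delta u)\psi_R\xi$, and again the two non-error integrals converge to $\int\nabla u\cdot\nabla\xi$ and $\int(-\Delta u)\xi$ by dominated convergence.

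The one point requiring care---and the main obstacle---is the vanishing of the cut-off error terms $\int u\,\nabla\psi_R\cdot\nabla\xi$ in (i) and $\int\xi\,\nabla u\cdot\nabla\psi_R$ in (ii). A crude estimate that replaces the Gaussian weight on the annulus $R\le|x|\le2R$ by its smallest value $\e^{-4\eps R^2}$ is useless, since it would be overwhelmed by the only available decay $\e^{-\alpha R^2}$ of $\nabla\xi$ (or $\xi$). The correct move is to keep the pointwise decay: using $|\nabla\xi(x)|\le c\,\e^{-\alpha|x|^2}\le c\,\e^{-\eps|x|^2}$ (valid precisely because $\alpha\ge\eps$) together with $|\nabla\psi_R|\le C/R$, one bounds $\big|\int u\,\nabla\psi_R\cdot\nabla\xi\big|\le \frac{Cc}{R}\int_{R\le|x|\le2R}|u|\,\e^{-\eps|x|^2}\,\d x$, and the right-hand side tends to $0$ as $R\to\infty$ since it is a tail of the convergent integral $\int_{\R^\N}|u|\,\e^{-\eps|x|^2}\,\d x$. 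The identical estimate with $u$ replaced by $\nabla u$ and $\nabla\xi$ by $\xi$ disposes of the error term in (ii), completing both proofs.
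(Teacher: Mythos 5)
Your proof is correct, but it takes a genuinely different route from the paper's. The paper applies Green's identity on balls $B(0,R)$, which produces flux terms $\int_{\partial B(0,R)} u\,\tfrac{\partial\xi}{\partial\vec n}\,\d S$, and disposes of them by an averaging trick: since
\begin{displaymath}
\int_0^\infty\Big(\int_{\partial B(0,R)}\big|u\,\tfrac{\partial\xi}{\partial\vec n}\big|\,\d S\Big)\d R
\;\le\;\int_{\R^\N}|u|\,|\nabla\xi|<\infty ,
\end{displaymath}
there is a sequence $R_n\to\infty$ along which the boundary term vanishes, and the volume integrals converge by dominated convergence. You instead never produce boundary terms at all: by testing the definition of the weak derivative (in (i)) and of the distributional Laplacian (in (ii)) against the compactly supported functions $\psi_R\,\partial_i\xi$ and $\psi_R\,\xi$, all integrals are over $\R^\N$ from the start, and the only price is the cut-off error terms involving $\nabla\psi_R$, which you correctly kill using the pointwise Gaussian decay of $\xi$, $\nabla\xi$ (this is exactly where $\alpha\ge\eps$ enters, as you note) together with the vanishing tails of the convergent integrals $\int|u|\e^{-\eps|x|^2}$ and $\int|\nabla u|\e^{-\eps|x|^2}$. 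What each approach buys: yours works directly from the definition of weak derivatives, so it sidesteps the (implicit) use of the Gauss--Green theorem and traces for $W^{1,1}_\loc$ functions on balls, and it yields convergence along the full limit $R\to\infty$ rather than a subsequence; the paper's version is shorter on the page, and its coarea-plus-subsequence device is a reusable standard trick. Your treatment of part (ii) is also slightly more careful than the paper's terse ``obtained in a similar fashion,'' since you isolate and justify the intermediate identity $\int\nabla u\cdot\nabla\phi=\int(-\Delta u)\,\phi$ for $\phi\in C_c^\infty(\R^\N)$ before inserting the cut-off.
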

\begin{proof}
\noindent (i) Observe that for any $R>0$
\begin{displaymath}
  \int_{B(0,R)} u (-\Delta \xi) = \int_{B(0,R)} \nabla u \nabla \xi -
  \int_{\partial B(0,R)} u \frac{\partial \xi}{\partial \vec{n}} \, \d
  S.
\end{displaymath}
Thanks to the Dominated Convergence Theorem it is enough to prove that
the last term above converges to zero, as $R\to \infty$, since we can
write
\begin{displaymath}
  \int_{B(0,R)} u (-\Delta \xi) = \int_{B(0,R)} \e^{-\eps|x|^{2}} u \,
  \e^{\eps|x|^{2}} (-\Delta \xi)
\end{displaymath}
and
\begin{displaymath}
   \int_{B(0,R)} \nabla u \nabla \xi =  \int_{B(0,R)} \e^{-\eps|x|^{2}}
   \nabla u\,  \e^{\eps|x|^{2}}\nabla \xi
\end{displaymath}
and pass to the limit in $R\to \infty$ in both terms.

For this
observe that
\begin{displaymath}
  \int_{0}^{\infty}  \int_{\partial B(0,R)} |u \frac{\partial
    \xi}{\partial \vec{n}} |  \, \d S \, \d R  \leq \int_{\R^{\N}} |u| | |\nabla \xi| <
  \infty.
\end{displaymath}
Hence for some sequence $R_{n} \to \infty$ we have $\displaystyle
\int_{\partial B(0,R_{n})} |u \frac{\partial    \xi}{\partial \vec{n}} | \to
0$.

Part  (ii) is obtained  in a similar fashion.
\end{proof}

\addcontentsline{toc}{section}{References}


\end{document}